\pdfoutput=1
\documentclass[11pt]{amsart}

\usepackage[utf8]{inputenc}
\usepackage{lmodern}
\usepackage[T1]{fontenc}
\usepackage{mathtools,amsfonts,amsthm,mathrsfs,amssymb}
\usepackage{textcomp}
\mathtoolsset{showonlyrefs}
\usepackage[shortlabels]{enumitem}
\usepackage{pgf,interval}
\intervalconfig{
  soft open fences,
  oo/.style={open},
  oc/.style={open left},
  co/.style={open right},
  cc/.style={},
}

\usepackage{bookmark}
\usepackage{hypcap}
\usepackage{microtype}

\newtheorem{theorem}{Theorem}
\newtheorem{lemma}[theorem]{Lemma}
\newtheorem{proposition}[theorem]{Proposition}
\newtheorem{corollary}[theorem]{Corollary}
\newtheorem{conjecture}[theorem]{Conjecture}

\newtheorem*{spatialmarkov}{Spatial Markov property}

\newtheorem*{llll}{Lopsided Lov\'{a}sz local lemma}
\newtheorem*{chernoffnegcor}{Chernoff bound for negatively correlated variables}

\theoremstyle{definition}
\newtheorem*{standing}{Standing assumptions}
\newtheorem*{furtherstanding}{Further standing assumptions}

\newcommand{\EE}{\mathbb{E}}
\newcommand{\ZZ}{\mathbb{Z}}
\newcommand{\NN}{\mathbb{N}}

\newcommand{\cE}{\mathcal{E}}
\newcommand{\bF}{\mathbf{F}}
\newcommand{\bI}{\mathbf{I}}
\newcommand{\cI}{\mathcal{I}}

\newcommand{\bU}{\mathbf{U}}

\newcommand{\bX}{\mathbf{X}}
\newcommand{\bY}{\mathbf{Y}}

\newcommand{\sH}{\mathscr{H}}

\newcommand{\badeve}{B'}

\newcommand{\Lam}{\Lambda}
\newcommand{\lam}{\lambda}
\newcommand{\Gam}{\Gamma}
\newcommand{\gam}{\gamma}
\renewcommand{\epsilon}{\varepsilon}
\newcommand{\eps}{\varepsilon}
\renewcommand{\Pr}{\mathbb{P}}
\renewcommand{\subset}{\subseteq}
\newcommand{\RR}{\mathbb{R}}
\newcommand{\indicator}[1]{\mathbf{1}_{#1}}
\newcommand{\fn}{K}
\DeclareMathOperator{\dom}{dom}

\DeclareMathOperator{\mad}{mad}

\newlist{enumerata}{enumerate}{1}
\setlist[enumerata]{label=\upshape{(\alph*)}}
\setlist[enumerate]{label=\upshape{(\roman*)}}

\title[Graph structure via local occupancy]{Graph structure via local occupancy}
\date{\today}

\author[E.\ Davies]{Ewan Davies}
\address{Department of Computer Science, University of Colorado Boulder, USA}
\email{maths@ewandavies.org}
\thanks{(E.\ Davies) The research leading to these results has received funding from the European Research Council under the European Union's Seventh Framework Programme (FP7/2007-2013) / ERC grant agreement \textnumero{} 339109. Part of this work was done while the author was visiting the Simons Institute for the Theory of Computing.}

\author[R.\ J.\ Kang]{Ross J.\ Kang}
\address{Department of Mathematics, Radboud University Nijmegen, Netherlands.}
\email{ross.kang@gmail.com}
\thanks{(R.\ J.\ Kang) Supported by a Vidi grant (639.032.614) of the Netherlands Organisation for Scientific Research (NWO)}

\author[F.\ Pirot]{Fran\c{c}ois Pirot}
\address{Department of Mathematics, Radboud University Nijmegen, Netherlands and LORIA, Universit\'e de Lorraine, Nancy, France.}
\email{francois.pirot@loria.fr}

\author[J.-S.\ Sereni]{Jean-S\'{e}bastien Sereni}
\address{Service public français de la recherche, Centre National de la Recherche Scientifique, CSTB (ICube), Strasbourg, France.}
\email{sereni@kam.mff.cuni.cz}

\hypersetup{pdfauthor={E. Davies, R. J. Kang, F. Pirot, J.-S. Sereni},pdftitle={Graph structure via local occupancy}}

\subjclass[2010]{Primary 05C35, 05D40, 05C15; Secondary 05D10}


\begin{document}
\begin{abstract}
The first author together with Jenssen, Perkins and Roberts (2017) recently showed how local properties of the hard-core model on triangle-free graphs guarantee the existence of large independent sets, of size matching the best-known asymptotics due to Shearer (1983).
The present work strengthens this in two ways: first, by guaranteeing stronger graph structure in terms of colourings through applications of the Lov\'{a}sz local lemma; and second, by extending beyond triangle-free graphs in terms of local sparsity, treating for example graphs of bounded local edge density, of bounded local Hall ratio, and of bounded clique number.
This generalises and improves upon much other earlier work, including that of Shearer (1995), Alon (1996) and Alon, Krivelevich and Sudakov (1999), and more recent results of Molloy (2019), Bernshteyn (2019) and Achlioptas, Iliopoulos and Sinclair (2019).
Our results derive from a common framework built around the hard-core model. It pivots on a property we call local occupancy, giving a clean separation between the methods for deriving graph structure with probabilistic information and verifying the requisite probabilistic information itself.
\end{abstract}

\maketitle

\newpage
\tableofcontents


\section{Introduction}\label{sec:intro}

\subsection{Background and motivation}

The asymptotic estimation of Ramsey numbers is of fundamental importance to combinatorial mathematics~\cite{Ram29,ErSz35,Erd47}. The special case of off-diagonal Ramsey numbers has in itself been critical in the development of probabilistic and extremal combinatorics~\cite{AKS80,AKS81,She83,Kim95}. 
The best known asymptotic upper bound on the off-diagonal Ramsey numbers, of the form~$R(3,k) \le (1+o(1))k^2/\log k$ as~$k\to\infty$, has long remained that of Shearer~\cite{She83}. Let us state his result in terms of the size of a largest independent set in triangle-free graphs of given maximum degree; it has the above Ramsey number bound as a corollary.
\begin{theorem}[Shearer~\cite{She83}]\label{thm:shearer}
For any triangle-free graph $G$ of maximum degree~$\Delta$, the independence number of~$G$ satisfies, as~$\Delta\to\infty$, 
\[ \alpha(G) \ge (1-o(1)) |V(G)|\frac{\log \Delta}{\Delta}. \] 
\end{theorem}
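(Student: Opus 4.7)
\emph{Proof proposal.} The plan is to lower-bound $\alpha(G)$ via the \emph{hard-core model} on $G$. For a fugacity $\lambda > 0$, let $\mu = \mu_{G,\lambda}$ be the probability measure on independent sets of $G$ giving $I$ weight proportional to $\lambda^{|I|}$, and let $\alpha_G(\lambda) := |V(G)|^{-1}\EE_\mu |I|$ denote the \emph{occupancy fraction}. Since $\mu$ is supported on independent sets, $\alpha(G) \ge |V(G)| \cdot \alpha_G(\lambda)$ for every $\lambda > 0$, so it suffices to find $\lambda$ with $\alpha_G(\lambda) \ge (1 - o(1))\log\Delta/\Delta$.

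The central ingredient is a local analysis at each $v \in V(G)$ via the Spatial Markov property. Conditional on the restriction of the sample to $V(G)\setminus N[v]$, the distribution of the sample on $N[v]$ depends on the external data only through the random \emph{unblocked set} $U_v \subseteq N(v)$, consisting of those neighbours of $v$ with no neighbour in $I \cap (V(G)\setminus N[v])$. The critical use of triangle-freeness is that $N(v)$ is itself an independent set, so the conditional measure on $N[v]$ is the hard-core measure on a star with $|U_v|$ leaves; an explicit computation gives
\[
\Pr_\mu[v \in I \mid U_v] = \frac{\lambda}{\lambda + (1+\lambda)^{|U_v|}}.
\]
For $\lambda < 1$ this is a convex decreasing function of $|U_v|$, so by Jensen's inequality, $\Pr_\mu[v \in I] \ge \lambda/(\lambda + (1+\lambda)^{\EE|U_v|})$.

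The final step is to turn this into an inequality on $\alpha_G(\lambda)$ alone. The intuition is that more occupied vertices force more blocked neighbours of $v$, so $\EE|U_v|$ decreases as $\alpha_G(\lambda)$ grows; suitable hard-core identities relating $\Pr_\mu[u \in U_v]$ to occupancy probabilities make this quantitative. Averaging over $v$ yields a self-referential inequality of the form $\alpha_G(\lambda) \ge \lambda/(\lambda + (1+\lambda)^{f(\alpha_G(\lambda))})$ for a decreasing function $f$; choosing $\lambda \approx \log\Delta/\Delta$ so that $(1+\lambda)^{f(\alpha_G(\lambda))} \approx \Delta$ and solving this fixed point asymptotically recovers the Shearer bound.

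The main obstacle is tightness: both the Jensen step and the bound on $\EE|U_v|$ introduce slack, and care is needed to extract the sharp leading constant~$1$ in front of $\log\Delta/\Delta$. I expect the paper's ``local occupancy'' framework to axiomatise precisely the right single-vertex probabilistic input --- a coupled bound on $\Pr_\mu[v \in I]$ and on a quantity like $\EE|U_v|$ --- so as to separate this sharpness question from the global averaging, and simultaneously to free the argument from the triangle-free hypothesis by accommodating the other notions of local sparsity (bounded local edge density, bounded local Hall ratio, bounded clique number) mentioned in the abstract.
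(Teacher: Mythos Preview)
Your setup is right and matches the paper: the hard-core model, the spatial Markov property, and the star computation $\Pr[v\in I\mid U_v]=\lambda/(\lambda+(1+\lambda)^{|U_v|})$ are all correct for triangle-free $G$. The gap is in your ``final step''. After Jensen you need an \emph{upper} bound on $\EE|U_v|$ in terms of the occupancy fraction, but the natural hard-core identities go the other way: for $w\in N(v)$ one has $\Pr(w\in U_v)=\Pr((N(w)\setminus\{v\})\cap I=\varnothing)\ge\Pr(N(w)\cap I=\varnothing)=\tfrac{1+\lambda}{\lambda}\Pr(w\in I)$, and FKG-type positive association in the hard-core model again yields lower bounds, not upper bounds, on $\Pr(w\in U_v)$. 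So the self-referential inequality you sketch is not actually available, and your suggested fugacity $\lambda\approx\log\Delta/\Delta$ is a symptom: with that choice even the paper's sharp analysis gives only $\alpha_G(\lambda)\gtrsim\log\log\Delta/\Delta$.

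The paper (following~\cite{DJPR18}) sidesteps both Jensen and the fixed-point entirely. It conditions on $\bI\setminus N(u)$ (open neighbourhood), so that in the triangle-free case, writing $y=|\bF_{N(u)}|$, one has simultaneously
\[
\Pr(u\in\bI\mid y)=\tfrac{\lambda}{1+\lambda}(1+\lambda)^{-y}
\quad\text{and}\quad
\EE\big[|N(u)\cap\bI|\,\big|\,y\big]=\tfrac{\lambda}{1+\lambda}\,y.
\]
One then chooses constants $\beta,\gamma>0$ so that the \emph{linear} combination $\beta\tfrac{\lambda}{1+\lambda}(1+\lambda)^{-y}+\gamma\tfrac{\lambda}{1+\lambda}y\ge 1$ holds for every $y\ge 0$ (a one-variable convex minimisation). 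Taking expectations and summing over $u$ uses only the trivial identities $\sum_u\Pr(u\in\bI)=\EE|\bI|$ and $\sum_u\EE|N(u)\cap\bI|\le\Delta\,\EE|\bI|$, yielding $\EE|\bI|\ge|V(G)|/(\beta+\gamma\Delta)$ with no self-reference. The optimal $(\beta,\gamma)$ give $\beta+\gamma\Delta=\tfrac{1+\lambda}{\lambda}\exp(W(\Delta\log(1+\lambda)))$, and taking $\lambda=1/\log\Delta$ recovers Shearer's constant exactly.

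Your last paragraph is essentially prescient: the paper's local occupancy is precisely a coupled linear bound on $\Pr(u\in\bI)$ and a second quantity, namely $\EE|N(u)\cap\bI|$ rather than $\EE|U_v|$ (in the triangle-free case these differ only by the factor $\lambda/(1+\lambda)$). The point is that the second quantity is chosen to be directly summable over $u$, which is what makes the argument close without a fixed point.
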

\noindent
This bound is sharp up to an asymptotic factor of~$2$ due to random regular
graphs~\cite{KoMa77,Bol81}, while the Ramsey number corollary is sharp up to an
asymptotic factor of~$4$ by the ultimate outcome of the triangle-free graph
process~\cite{BoKe13+,FGM20}.

With somewhat distinct but also classic origins, cf.~e.g.~\cite{Zyk49,UnDe54}, the chromatic number of triangle-free graphs has been intensively studied from various perspectives for decades. In a recent breakthrough that improved upon and dramatically simplified a seminal work of Johansson~\cite{Joh96}, Molloy analysed a randomised colouring procedure with entropy compression to show the following; see also~\cite{Ber19,DJKP18+local}.
\begin{theorem}[Molloy~\cite{Mol19}]\label{thm:molloy}
For any triangle-free graph~$G$ of maximum degree~$\Delta$, the chromatic number of $G$ satisfies $\chi(G) \le (1+o(1)) \Delta/\log \Delta$ as~$\Delta\to\infty$.
\end{theorem}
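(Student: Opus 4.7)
The plan is to combine the hard-core-model/local-occupancy method that underlies Theorem~\ref{thm:shearer} with a Lov\'{a}sz local lemma argument to upgrade an independent-set bound into a chromatic bound, in the spirit of Molloy's and Bernshteyn's approaches.

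First I would fix a fugacity $\lambda$ of order $\log \Delta/\Delta$ and consider the hard-core measure $\mu_\lambda$ on independent sets of $G$. Using triangle-freeness, and following the same variational/entropy argument Davies, Jenssen, Perkins and Roberts used to establish Shearer's bound, one should extract a \emph{local} occupancy inequality: for every vertex $v \in V(G)$,
\[ \Pr_{\bI \sim \mu_\lambda}[v \in \bI] \ge (1-o(1))\frac{\log \Delta}{\Delta}. \]
The key structural input is that the neighbourhood $N(v)$ is itself independent, so the conditional hard-core partition function around $v$ is amenable to a one-vertex convexity/Jensen estimate.

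Next, set $k = \lceil (1+\eps)\Delta/\log \Delta \rceil$ for small $\eps > 0$, draw $k$ independent samples $\bI_1,\dots,\bI_k$ from $\mu_\lambda$, and define a partial colouring by assigning to $v$ the least index $i$ with $v \in \bI_i$. This partial colouring is automatically proper, so the only task is to rule out uncoloured vertices. For each $v$ the bad event $B_v = \{v \notin \bI_1 \cup \dots \cup \bI_k\}$ has probability at most $(1 - (1-o(1))\log \Delta/\Delta)^k \le \Delta^{-(1+\eps)+o(1)}$. A Lov\'{a}sz local lemma application over the family $(B_v)_{v\in V(G)}$ should then produce a proper $k$-colouring of $G$, which yields the theorem.

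The main obstacle I foresee is the LLL step itself, because the hard-core measure is globally correlated and a naive dependency graph for the events $B_v$ is the whole of $G$. The heart of the proof will be to use the spatial Markov property to couple, within each round $i$, the hard-core sample near $v$ to one depending only on a ball of radius $O(1)$ around $v$, incurring only an exponentially small coupling error. Once this localisation is carried out, every $B_v$ depends on at most $\Delta^{O(1)}$ other $B_u$'s, and an asymmetric LLL (or a lopsided variant tailored to the $\Delta^{-(1+\eps)}$ probability, exactly the sort of clean \emph{local occupancy to colouring} lemma the abstract advertises) finishes the argument.
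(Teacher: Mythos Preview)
Your proposal contains two genuine gaps.

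First, the per-vertex occupancy bound $\Pr[v\in\bI]\ge (1-o(1))\log\Delta/\Delta$ is not what the hard-core analysis yields, and it is false in general. What local occupancy actually gives (see~\eqref{eqn:ideas,closed}) is the trade-off
\[
\beta\,\Pr(v\in\bI)+\gamma\,\EE|\bI\cap N(v)|\ge 1,
\]
and only after summing over all vertices does one recover the average bound $\EE|\bI|\ge |V(G)|/(\beta+\gamma\Delta)$. For an individual vertex the first term can be tiny: at the centre of a star $K_{1,\Delta}$ one has $\Pr(v\in\bI)=\frac{\lam}{1+\lam}(1+\lam)^{-\Delta}$, which for any $\lam$ with $\lam\Delta\to\infty$ is far below $\log\Delta/\Delta$. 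So the events $B_v$ do not have the probabilities you claim, and the ``sample $k$ independent sets'' scheme does not colour every vertex with the right probability.

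Second, your proposed LLL localisation via a finite-range coupling with ``exponentially small coupling error'' would require strong spatial mixing of the hard-core model on triangle-free graphs at the relevant fugacity. The fugacities in play ($\lam=1/\log\Delta$ in the paper, or your $\lam\asymp\log\Delta/\Delta$) are well above the tree uniqueness threshold $\lam_c(\Delta)\sim e/\Delta$, so no such decay is available uniformly over the class. This is why neither Molloy, nor Bernshteyn, nor the present paper proceeds this way. Instead one samples a \emph{single} hard-core independent set on the \emph{cover graph} $H$ (equivalently, a random partial proper colouring), and the spatial Markov property is used not for approximate coupling but to give an \emph{exact} description of $\bI\cap L(N(u))$ conditional on $\bI$ outside $L(N(u))$; this is precisely what feeds into the lopsided local lemma (Lemmas~\ref{lem:phase1}--\ref{lem:hcmH}). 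The bad event $B_u$ is not ``$u$ is uncoloured'' but rather ``$u$ is uncoloured and either has too few surviving colours or some surviving colour has too many competitors''; a second, elementary finishing-blow (Lemma~\ref{lem:phase2}) then completes the colouring. The local occupancy inequality enters exactly once, to lower-bound the expected number of surviving colours at $u$ (Lemma~\ref{lem:LIu}), not to lower-bound $\Pr(u\in\bI)$.
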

\noindent
As every colour in a proper colouring induces an independent set, note Molloy's result matches Shearer's bound in the sense that it directly implies the statement in Theorem~\ref{thm:shearer}.

The context of both Theorems~\ref{thm:shearer} and~\ref{thm:molloy}---in particular, their heuristic similarity to the corresponding problems in random graphs---hints that some suitable probabilistic insights could yield a deeper structural understanding of the class of triangle-free graphs or closely related classes.
The present work is devoted to making this intuition concrete through the use, in concert with the Lov\'{a}sz local lemma, of the hard-core model, an elegant family of probability distributions over the independent sets originating in statistical physics.

A precursor to this is the discovery by the first author together with Jenssen, Perkins and Roberts~\cite{DJPR18} that Theorem~\ref{thm:shearer} may, via a local understanding of the hard-core model, be derived through the probabilistic method's most elementary form, i.e.~with a bound on the average size of an independent set; see Subsection~\ref{sub:ideas}. 
Previous work of a subset of the authors with de Joannis de Verclos~\cite{DJKP18+local} found that nearly the same approach yields a result intermediate to Theorems~\ref{thm:shearer} and~\ref{thm:molloy}, in terms of fractional chromatic number.
Building further upon this, here we have developed a unified framework with which we can handle significantly more general settings, in two different senses. 
First, going beyond triangle-free, we treat graphs with certain neighbourhood sparsity conditions. 
Second, going beyond the independence and chromatic numbers (for which we have also obtained new bounds), we are able to make conclusions not only for occupancy fraction and local fractional colourings (as in~\cite{DJPR18,DJKP18+local,DJKP18+occupancy}) but also, importantly, for local list colourings and correspondence colourings (to be discussed more fully later).
Our main result encompasses or improves upon nearly all earlier work in the area~\cite{AIS19,AEKS81,AKS81,Alo96,AKS99,BGG18,Ber19,BKNP18+,DJKP18+local,DJKP18+occupancy,DJPR18,Joh96,Kim95b,Mol19,PS15,She83,She95,Vu02}. 
If one is interested in constructing colourings in polynomial time then our framework can give such algorithms through the use of an algorithmic version of the Lov\'{a}sz local lemma, and we give these details in a companion paper dedicated to this purpose~\cite{DKPS20}.
Here we focus more on the existence of colourings and are not concerned with algorithms. 

In the following subsection we give an informal version of our main result and the applications to various graph classes. 
For clarity, we give these statements purely in terms of chromatic number (so along the lines of Theorem~\ref{thm:molloy}), and present a stronger, more general, and significantly more technical version of our main result in Section~\ref{sec:main}, and describe  subsequent applications in their own sections. 
In Subsection~\ref{sub:intro,local} we introduce the motivation for the technical strengthening of our results, namely local colouring.

\subsection{Framework and applications in terms of chromatic number}\label{sub:intro,chi}

In order to describe our main result, we introduce the hard-core model and its partition function.
It is also helpful to keep in mind the following basic sequence of inequalities, valid for every graph~$G$:
\begin{align}\label{eqn:params,basic}
\omega(G)\le \rho(G) \le \chi(G) \le \Delta(G)+1,
\end{align}
where~$\omega(G)$ is the clique number of~$G$, $\rho(G) \coloneqq
\max\{|V(H)|/\alpha(H)\mid H\subseteq G\}$ is the \emph{Hall ratio} of~$G$,
and~$\Delta(G)$ is the maximum degree of~$G$.

Given a graph $G$, we write $\cI(G)$ for the collection of independent sets of~$G$ (including the empty set).
Given~$\lam>0$, the \emph{hard-core model on~$G$ at fugacity~$\lam$} is a
probability distribution on~$\cI(G)$, where each~$I\in\cI(G)$ occurs with
probability proportional to~$\lam^{|I|}$. Writing~$\bI$ for the random
independent set,
\[
    \forall I\in\cI(G),\quad\Pr(\bI=I) = \frac{\lam^{|I|}}{Z_G(\lam)},
\]
where the normalising term in the denominator, $Z_G(\lam)$, is the \emph{partition function}
(or \emph{independence polynomial}), defined to be~$\sum_{I\in\cI(G)} \lam^{|I|}$.
Note that with~$\lam=1$ this becomes the uniform distribution over~$\cI(G)$. 
The \emph{occupancy fraction} of the distribution is~$\EE|\bI|/|V(G)|$.
Note $\alpha(G) \ge \EE|\bI|$ by the probabilistic method.

Our results rely critically on being able to guarantee a certain property of the hard-core model we call \emph{local occupancy}.
Given a graph~$G$ and $\lam,\beta,\gam>0$, we say that the hard-core model on~$G$ at fugacity $\lam$ has \emph{local $(\beta,\gam)$-occupancy} if, for each~$u\in V(G)$ and each induced subgraph~$F$ of the subgraph~$G[N(u)]$ induced by the neighbourhood of~$u$, it holds that
\begin{align}\label{eqn:main,nonlocal}
\beta\frac{\lam}{1+\lam} \frac{1}{Z_F(\lam)}+ \gam\frac{\lam Z_F'(\lam)}{Z_F(\lam)} \ge 1.
\end{align}

Our main framework is a suite of results of increasing technical difficulty showing that local occupancy guarantees graph structure in terms of occupancy fraction, fractional colouring, and list/correspondence chromatic number. 
Here we give only an imprecise version of what we obtain for chromatic number, deferring the full statements of our results to Section~\ref{sec:main}.

\begin{theorem}\label{thm:main,chi}
Suppose that $G$ is a graph of maximum degree $\Delta$ such that the hard-core model on~$G$ at fugacity~$\lam$ has local $(\beta,\gam)$-occupancy for some positive reals $\lam,\beta,\gam$.
If $\gam\Delta/\beta = \Delta^{\Omega(1)}$ and~$G$ satisfies one further
    technical condition, then the chromatic number of~$G$ satisfies $\chi(G)
    \le (1+o(1))\gam\Delta$ as $\Delta\to\infty$.
\end{theorem}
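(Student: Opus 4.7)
Approach: follow the Molloy--Bernshteyn paradigm of Theorem~\ref{thm:molloy}. Set $q\coloneqq\lceil(1+\eps)\gam\Delta\rceil$ and build a proper $q$-colouring of $G$ by sampling colour classes from the hard-core distribution and upgrading the resulting partial colouring to a total proper colouring via the Lov\'{a}sz local lemma. The first step is to repackage local $(\beta,\gam)$-occupancy as a pointwise probabilistic statement. By the spatial Markov property, for any vertex $v$ and any legal configuration $J$ of $\bI$ on $V(G)\setminus N[v]$,
\[
\Pr[v\in \bI\mid \bI\cap(V(G)\setminus N[v])=J] \;=\; \frac{\lam}{\lam + Z_{F_J}(\lam)},
\]
where $F_J$ is the subgraph of $G[N(v)]$ induced by the neighbours of $v$ not blocked by $J$. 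Applying \eqref{eqn:main,nonlocal} with $F=F_J$ and averaging over $J$ should yield a lower bound of the form $\Pr[v\in\bI]\gtrsim 1/(\beta+\gam\Delta)$, perhaps with minor corrections.

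Next, draw $q$ independent hard-core samples $\bI_1,\dots,\bI_q$ on $G$ at fugacity $\lam$ and provisionally assign each vertex $v$ to the least index $c$ with $v\in\bI_c$ (leaving $v$ uncoloured if no such $c$ exists). Distinct colour classes are automatically independent, so the resulting partial colouring is proper, and the task reduces to showing that every vertex is eventually assigned a colour. The probability that $v$ receives none of the $q$ colours is at most $\exp(-q/(\beta+\gam\Delta))$, which equals $\exp(-(1+\eps)+o(1))$ under the hypothesis $\gam\Delta/\beta=\Delta^{\Omega(1)}$---a constant strictly less than~$1$. The Lov\'{a}sz local lemma would then be applied to the bad events $\badeve_v$ (``$v$ is uncoloured''). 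Each $\badeve_v$ a priori depends on the joint configuration of all of $\bI_1,\dots,\bI_q$; however, by the spatial Markov property (or a suitable monotone coupling) each $\bI_c$ can be decoupled at negligible probabilistic cost to its behaviour inside a bounded-radius ball around $v$, yielding a dependency graph of polynomial degree in $\Delta$. A single-round application of LLL will not quite meet $e\cdot\Pr(\badeve_v)\cdot d\le 1$, so the scheme will need to be iterated as a nibble: reserve $o(q)$ colours, carry out a first round to obtain a partial colouring that fails on at most a $\Delta^{-\Omega(1)}$-fraction of each neighbourhood, then finish greedily or via list colouring using the reserved colours.

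The principal obstacle is the tension between the global correlations of the hard-core measure and the locality demanded by LLL: the argument requires a quantitative decay-of-correlations or localisation statement showing that $\badeve_v$ can in fact be controlled by the hard-core configuration inside a bounded neighbourhood of $v$. The unspecified ``further technical condition'' in the statement most likely encodes precisely the sparsity hypothesis under which this localisation succeeds. Achieving the sharp leading constant $1+o(1)$, as opposed to some larger $C$, furthermore requires careful tuning of $\eps$ across the nibble rounds and invocation of the general or lopsided LLL in place of the symmetric version; this, together with the decoupling analysis, is expected to constitute the bulk of the quantitative work.
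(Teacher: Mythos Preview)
Your proposal has a genuine gap at the decoupling step. You assert that ``by the spatial Markov property (or a suitable monotone coupling) each $\bI_c$ can be decoupled at negligible probabilistic cost to its behaviour inside a bounded-radius ball around $v$'', but the spatial Markov property does not deliver this. It says that, \emph{conditional on the configuration on the boundary of a region}, the inside is independent of the outside; it does not say that the marginal event $\{v\in\bI_c\}$ is approximately independent of the configuration far away. The hard-core model on a general graph $G$ can have long-range correlations, and your bad event $\badeve_v=\{v\notin\bI_1,\dots,v\notin\bI_q\}$ is a function of the full samples. To run the lopsided local lemma you would need $\Pr(\badeve_v\mid\bigcap_{w\in S}\overline{\badeve_w})$ bounded for all $S$ far from $v$, and conditioning on events of the form $\overline{\badeve_w}$ is global conditioning that spatial Markov does not localise. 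A nibble iteration does not help: each round faces the same obstruction when you try to certify via LLL that the uncoloured fraction in every neighbourhood is small.

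The paper takes a genuinely different route that sidesteps this issue. It never samples independent sets in $G$. Instead it passes to the cover graph $H$ (where a proper $q$-colouring of $G$ is exactly an independent set of size $|V(G)|$ in $H$) and draws a \emph{single} hard-core sample $\bI$ on $H$. The bad event $B_u$ is not ``$u$ is uncoloured'' but rather ``$u$ is uncoloured \emph{and} its residual list $L_{\bI}(u)$ is too short or some surviving colour has too many competitors''. This event is determined by $\bI\cap L(N^2[u])$, so spatial Markov on $H$ with $X=L(N(u))$ gives precisely the locality the lopsided local lemma needs (dependency sets $N^3[u]$ in $G$). This is the Molloy--Bernshteyn mechanism; the paper's contribution is to run it at arbitrary fugacity $\lam$ and to recognise that local $(\beta,\gam)$-occupancy, applied to the layers $H[\Lam_x]$ (which are isomorphic to induced subgraphs of $G[N(u)]$), is exactly the hypothesis that makes the expected residual list large (Lemma~\ref{lem:LIu}). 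The ``further technical condition'' is~\eqref{eq:main,chic,simple,Zlarge}, namely $Z_F(\lam)\ge 8\Delta^4$ for all induced $F\subset G[N(u)]$ on at least $\ell/8$ vertices: it controls the probability that a surviving colour still has many competitors (Lemma~\ref{lem:deg}), so that after the LLL phase the deterministic finishing blow of Lemma~\ref{lem:phase2} applies. No multi-round nibble is needed.
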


\noindent
Practically speaking, to derive good bounds from Theorem~\ref{thm:main,chi}, it suffices to determine $\beta,\gam>0$ that minimise, in a given graph of maximum degree $\Delta$, the value of $\beta+\gam\Delta$ under the condition of local $(\beta,\gam)$-occupancy. 
In all our applications, having found such reals~$\beta,\gam$, the extra conditions we require are easily verified.
Thus Theorem~\ref{thm:main,chi} essentially reduces the problem of bounding the chromatic number to a local analysis of the hard-core model.

As a key example~\cite{DJPR18,DJKP18+local}, if~$G$ is triangle-free then choosing
\begin{align}\label{eqn:betagam,triangle}
    \beta=\frac{\gam{(1+\lam)}^{\frac{1+\lam}{\gam\lam}}}{e\log(1+\lam)} \ \text{ and } \ \gam=\frac{1+\lam}{\lam}\frac{\log(1+\lam)}{1+W(\Delta\log(1+\lam))},
\end{align}
where $W(\cdot)$ is the Lambert $W$-function (see Subsection~\ref{sub:Lambert} for more on this function), suffices for local $(\beta,\gam)$-occupancy. Moreover, these choices satisfy
\begin{align}
\beta+\gam\Delta=\frac{1+\lam}{\lam} \exp(W(\Delta\log(1+\lam))).
\end{align}
By the asymptotic properties of the function~$W$, taking $\lam=1/\log \Delta$ and applying Theorem~\ref{thm:main,chi} thus yields Theorem~\ref{thm:molloy}. 
In Section~\ref{sec:conc} we discuss the fact that this choice of $(\beta,\gam)$ is essentially optimal for the class of triangle-free graphs~\cite{DJPR18}.

We are interested in generalising the condition of having no triangles---a condition on local density---and applying Theorem~\ref{thm:main,chi} to obtain results similar to Theorem~\ref{thm:molloy}. 
In the first four cases we actually give strict generalisations, in the sense that Theorem~\ref{thm:molloy} with the same leading constant `$1$' is a special case of our results. 
The final setting is the classic case of bounded clique number~$\omega$ which is also a generalisation of triangle-free; however, for this setting our result does not `smoothly' extend the triangle-free case in the sense that Theorem~\ref{thm:molloy} outperforms the special case of $\omega=2$. 
(The difficulty of aligning the cases $\omega=2$ and $\omega>2$ is well recognised and subject to an important conjecture of Ajtai, Erd\H{o}s, Koml\'{o}s and Szemer\'{e}di~\cite{AEKS81}.)

\smallskip
For our first setting we view a triangle as a cycle and consider the generalisation to arbitrary lengths. 
The following result is a common generalisation of Theorem~\ref{thm:molloy} and Kim's~\cite{Kim95b} colouring bound for graphs of girth~$5$, in the sense that the conclusion is the same in (asymptotic) strength, but under the requirement of a single excluded cycle length that is not too large as a function of the maximum degree. 
We also note that this is a stronger form of one consequence from~\cite[Cor.~2.4]{AKS99}, the same result but with neither a specific leading constant nor any dependence of the cycle length upon maximum degree.

\begin{theorem}\label{thm:Cfree,chi}
For any graph $G$ of maximum degree~$\Delta$ which contains no subgraph isomorphic to the cycle~$C_k$ of length $k$, where $k= k(\Delta)\ge 3$ satisfies $k =\Delta^{o(1)}$ as $\Delta\to\infty$, the chromatic number of~$G$ satisfies~$\chi(G) \le (1+o(1)) \Delta/\log \Delta$.
\end{theorem}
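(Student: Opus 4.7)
\smallskip
\noindent\textbf{Proof plan.}
The plan is to invoke Theorem~\ref{thm:main,chi} with parameters essentially matching the triangle-free choice~\eqref{eqn:betagam,triangle}: $\lam$ of order $1/\log\Delta$ together with the associated $\beta$ and $\gam$, possibly perturbed by a $1+o(1)$ factor. This yields $\beta+\gam\Delta=(1+o(1))\Delta/\log\Delta$, which is precisely the claimed bound on $\chi(G)$; the hypothesis $\gam\Delta/\beta=\Delta^{\Omega(1)}$ and the unspecified technical condition then follow from elementary estimates on these parameters.

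The key structural observation is that $C_k$-freeness of $G$ imposes strong sparsity on its neighbourhood subgraphs. For every $u\in V(G)$ and every induced subgraph $F\subseteq G[N(u)]$, the graph $F$ contains no $P_{k-1}$ (path on $k-1$ vertices) as a subgraph: any such path $v_1v_2\cdots v_{k-1}$ in $F$ would extend, via $u$, to the cycle $uv_1v_2\cdots v_{k-1}u$, a $C_k$ in $G$. By the Erd\H{o}s--Gallai theorem, $|E(F)|\le (k-3)|V(F)|/2=\Delta^{1+o(1)}$, so $F$ is very sparse. The central task is therefore to verify local $(\beta,\gam)$-occupancy for all such $P_{k-1}$-free~$F$. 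I would accomplish this through an auxiliary lemma: for a $P_{k-1}$-free graph $F$ on $n\le\Delta$ vertices of maximum degree at most $\Delta-1$, with $\lam=O(1/\log\Delta)$ and $k=\Delta^{o(1)}$, both $Z_F(\lam)$ and $\lam Z_F'(\lam)/Z_F(\lam)$ agree with the edgeless-graph values $(1+\lam)^n$ and $n\lam/(1+\lam)$ sufficiently closely to recover~\eqref{eqn:main,nonlocal}. This reduces the check to the triangle-free calculation already embodied in~\eqref{eqn:betagam,triangle}.

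The main obstacle is this auxiliary lemma. A useful sanity check is the Erd\H{o}s--Gallai extremal case of a vertex-disjoint union of copies of $K_{k-2}$, on which the hard-core occupancy equals $n\lam/(1+(k-2)\lam)=(1-o(1))n\lam/(1+\lam)$ since $(k-2)\lam=o(1)$, matching the edgeless value to leading order. Handling an arbitrary $P_{k-1}$-free $F$ calls either for a convexity/monotonicity reduction to this extremal example, or for an argument exploiting the component structure of $P_{k-1}$-free graphs (each connected component has diameter at most $k-3$). The condition $k=\Delta^{o(1)}$ will be critical in keeping both the edge count of $F$ and the cumulative occupancy error under control so that the perturbed triangle-free occupancy inequality continues to hold.
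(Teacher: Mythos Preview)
Your overall strategy matches the paper's: use Erd\H{o}s--Gallai on $P_{k-1}$-free neighbourhood subgraphs to control local density, then feed this into the local-occupancy framework (Theorem~\ref{thm:main,chi}). However, there is a genuine gap in your parameter choice and in your auxiliary lemma.

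The gap is the choice $\lam$ ``of order $1/\log\Delta$''. The hypothesis $k=\Delta^{o(1)}$ permits $k$ much larger than $\log\Delta$ (e.g.\ $k=e^{\sqrt{\log\Delta}}$), and then your sanity-check claim $(k-2)\lam=o(1)$ is false. In that regime the hard-core occupancy on a disjoint union of $K_{k-2}$'s is $n\lam/(1+(k-2)\lam)\sim n/(k-2)$, which is \emph{not} $(1-o(1))n\lam/(1+\lam)$; your auxiliary lemma therefore fails exactly on the Erd\H{o}s--Gallai extremal example. The paper fixes this by taking $\lam$ with $k\lam=o(1)$ (concretely $1/\lam=k\log\log\Delta$ in the proof of Theorem~\ref{thm:Cfree}\ref{itm:Cfree,chic}), so that the ``penalty'' factor $(1+\lam)^{k-3}=1+o(1)$; since $k=\Delta^{o(1)}$ one still has $\Delta\lam=\Delta^{1-o(1)}$ and $W(\Delta\lam)=(1-o(1))\log\Delta$, recovering the target bound.

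Separately, your auxiliary lemma is stronger than needed and correspondingly harder to prove. The paper does not show that $Z_F(\lam)$ is close to $(1+\lam)^y$; it only uses the trivial upper bound $Z_F(\lam)\le(1+\lam)^y$. The $C_k$-freeness enters solely through a lower bound on the occupancy: Lemma~\ref{lem:sparse,hcm} gives
\[
\frac{\lam Z_F'(\lam)}{Z_F(\lam)}\ge \frac{\lam}{1+\lam}\,y\,(1+\lam)^{-d}
\]
for any graph $F$ of average degree $d$, and Erd\H{o}s--Gallai supplies $d\le k-3$. These two one-sided bounds are enough to run the convex minimisation in Lemma~\ref{lem:mad} and obtain local $(\beta,\gam)$-occupancy with $\beta+\gam\Delta=(1+o(1))\Delta/\log\Delta$. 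This avoids any need to analyse the component structure of $P_{k-1}$-free graphs.
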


\noindent
This result follows from Theorem~\ref{thm:main,chi} and a local occupancy analysis given in Subsections~\ref{sub:sparse,hcm} and~\ref{sub:Cfree}.

\smallskip
The second setting is to view a triangle-free graph as one that has no edges in any neighbourhood, and to relax this condition to having `few' edges in any neighbourhood. 
This is the same as having a `bounded local triangle count'.
More precisely, suppose that $G$ is a graph of maximum degree $\Delta$, and for some $f \le \Delta^2+1$ the neighbourhood of every vertex in $G$ spans at most $\Delta^2/f$ edges. 
Note that $f=\Delta^2+1$ corresponds to the triangle-free case.
This problem setting was first considered, in terms of independence number, by Ajtai, Koml\'{o}s and Szemer\'{e}di~\cite{AKS81} and by Shearer~\cite{She83}, and, in terms of chromatic number, by Alon, Krivelevich and Sudakov~\cite{AKS99}, Vu~\cite{Vu02}, and more recently by Achlioptas, Iliopoulos and Sinclair~\cite{AIS19}.
Via a local occupancy analysis and an iterative splitting procedure given in Subsections~\ref{sub:sparse,hcm},~\ref{sub:sparsenbhds} and~\ref{sub:sparsenbhds,proof2}, we show how Theorem~\ref{thm:main,chi} implies the following.

\begin{theorem}\label{thm:sparsenbhds,chi}
For any graph $G$ of maximum degree $\Delta$ in which the neighbourhood of
every vertex in~$G$ spans at most~$\Delta^2/f$ edges, where $f=f(\Delta) \le
\Delta^2+1$, the chromatic number of~$G$ satisfies
$\chi(G)\le(1+o(1))\Delta/\log \sqrt{f}$ as~$f\to\infty$.
\end{theorem}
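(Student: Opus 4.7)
The strategy is to apply Theorem~\ref{thm:main,chi}. Thus the task reduces to exhibiting, for an appropriate fugacity $\lam$ of order $1/\log\sqrt{f}$, positive reals $\beta,\gam$ with $\gam \le (1+o(1))/\log\sqrt{f}$ such that the hard-core model on $G$ at fugacity $\lam$ has local $(\beta,\gam)$-occupancy, and checking that the auxiliary technical condition of Theorem~\ref{thm:main,chi} is satisfied (which should be routine for these parameter choices, given $f \le \Delta^2+1$).

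Verifying local occupancy amounts to showing that for every $u\in V(G)$ and every induced subgraph $F$ of $G[N(u)]$, the inequality \eqref{eqn:main,nonlocal} holds, where $n\coloneqq|V(F)|\le\Delta$ and $m\coloneqq|E(F)|\le\Delta^2/f$. The second term $\gam\lam Z_F'(\lam)/Z_F(\lam)$ equals $\gam\EE|\bI_F|$, where $\bI_F$ is a sample from the hard-core model on $F$. In the triangle-free case $m=0$, so $F$ is edgeless, $\EE|\bI_F|=n\lam/(1+\lam)$, and the choice \eqref{eqn:betagam,triangle} makes \eqref{eqn:main,nonlocal} tight at the critical value $n$. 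The new difficulty is that $F$ may carry up to $\Delta^2/f$ edges, which depress $\EE|\bI_F|$ below the edgeless value and must be compensated for only by a $(1+o(1))$ factor in the leading constant.

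The plan for handling this is an iterative splitting of $V(F)$: partition the vertices into a ``sparse'' part $A$ whose induced subgraph is nearly edgeless (so that the triangle-free style estimate applies to $\EE|\bI_{F[A]}|$ with only a mild loss) and a small exceptional part $B$ collecting the vertices that carry an abnormal amount of local edges, whose contribution is dominated crudely by the $\beta\lam/((1+\lam)Z_F(\lam))$ term in \eqref{eqn:main,nonlocal}. The edge budget $m\le\Delta^2/f$ is what allows $|A|$ to dominate $|B|$ and keeps $F[A]$ sufficiently sparse; iterating the split across the ``sparse'' sides drives the residual edge-density down so that the only log arising on the right-hand side of \eqref{eqn:main,nonlocal} is $\log\sqrt{f}$ rather than $\log\Delta$. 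Choosing $\lam$ so that $W(\lam m/n)$ is of order $\log\sqrt{f}$ then produces $\gam=(1+o(1))/\log\sqrt{f}$ and a matching $\beta$.

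The principal obstacle is calibrating this decomposition so that the loss in $\EE|\bI_F|$ caused by the edges of $F$ degrades the leading constant only by $1+o(1)$ as $f\to\infty$. A naive edge-by-edge correction yields a loss of order $\lam^2 m$, which translates into a multiplicative, rather than $(1+o(1))$, distortion of $\gam$; this is why the authors are forced into the iterative splitting procedure of Subsection~\ref{sub:sparsenbhds,proof2} instead of a one-shot estimate. Once the splitting is shown to absorb the edge cost to within a $(1+o(1))$ factor, Theorem~\ref{thm:main,chi} immediately yields $\chi(G)\le(1+o(1))\gam\Delta\le(1+o(1))\Delta/\log\sqrt{f}$.
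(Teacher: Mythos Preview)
Your proposal misidentifies both the source of the difficulty and what the iterative splitting procedure actually splits.

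First, the local occupancy verification is \emph{not} the hard part and requires no decomposition of~$F$. Since~$F\subseteq G[N(u)]$ has at most $t=\Delta^2/f$ edges on~$y$ vertices, its average degree is at most $\min\{y-1,2t/y\}\le\sqrt{2t}$; Lemma~\ref{lem:mad} (the bounded-local-$\mad$ analysis) then immediately yields suitable~$\beta,\gam$ with $\beta+\gam\Delta\sim\Delta/\log(\Delta/\sqrt{t})=\Delta/\log\sqrt{f}$ for an appropriate fugacity, with no splitting of~$V(F)$ at all. Your proposed partition of~$F$ into a sparse part~$A$ and an exceptional part~$B$ is neither what the paper does nor needed.

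Second, you dismiss the ``auxiliary technical condition'' of Theorem~\ref{thm:main,chi} as routine; it is precisely the bottleneck. The concentration requirements force the effective bound to be of the form $\Delta/\log(\sqrt{f}/\log\Delta)$ rather than $\Delta/\log\sqrt{f}$ (see Theorem~\ref{thm:sparsenbhds}\ref{itm:sparsenbhds,chic} and Corollary~\ref{cor:sparsenbhds}), so a direct application of the framework gives the claimed leading constant only when $f\ge(\log\Delta)^{2/\eps}$. When~$f$ is smaller---and the theorem must cover all~$f\to\infty$---this direct route loses a nontrivial factor.

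Third, the iterative splitting in Subsection~\ref{sub:sparsenbhds,proof2} is applied to~$V(G)$, not to~$V(F)$. Using Lemmas~\ref{lem:AKS} and~\ref{lem:AIS}, one repeatedly bipartitions~$V(G)$ so that each resulting induced subgraph has maximum degree roughly~$\Delta_*=\Delta/2^j$ while still having at most~$\Delta_*^2/f$ edges in each neighbourhood; the number~$j$ of iterations is chosen so that~$f$ becomes large relative to~$\Delta_*$, placing each part in the regime where Corollary~\ref{cor:sparsenbhds} applies. Summing over the~$2^j$ parts recovers the claimed bound on~$\chi(G)$. So the splitting is a reduction on the ambient graph to shrink~$\Delta$ relative to~$f$, not a device inside the local occupancy inequality.
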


\noindent
This asymptotically matches the known fractional bound~\cite{DJKP18+occupancy}, affirms~\cite[Conj.~3]{DJKP18+occupancy}, and improves the chromatic number bounds in~\cite{AIS19} by about a factor~$2$ in nearly the entire range of rates for~$f$ as a function of~$\Delta$. The bound is sharp up to an asymptotic factor of between~$2$ and~$4$ due to a random regular construction or a suitable blow-up, cf.~\cite{She83,DJKP18+occupancy}. 
Though the analysis of the hard-core model and selection of suitable $(\beta,\gamma)$ for local occupancy in this setting has essentially already been done~\cite{DJKP18+occupancy}, here we have a simpler and more general argument from which we deduce Theorems~\ref{thm:Cfree,chi} and~\ref{thm:sparsenbhds,chi}, cf.~Section~\ref{sec:mad}.

\smallskip
In fact, we have been able to naturally combine the settings of Theorems~\ref{thm:Cfree,chi} and~\ref{thm:sparsenbhds,chi} and give a result for graphs with the property that each vertex is contained in few $k$-cycles. 
We actually only require the weaker condition that for each~$u\in V(G)$, the induced neighbourhood subgraph~$G[N(u)]$ contains at most~$t$ copies of the path~$P_{k-1}$ on~$k-1$ vertices. 

\begin{theorem}\label{thm:boundedC,chi}
For any graph~$G$ of maximum degree~$\Delta$ in which the subgraph induced by each neighbourhood contains at most~$t$ copies of~$P_{k-1}$, where $t=t(\Delta)\ge 1/2$ and $k=k(\Delta)\ge 3$ satisfy $\Delta/(k+\sqrt{t})\to\infty$ as~$\Delta\to\infty$, the chromatic number of~$G$ satisfies
\[
\chi(G)\le (1+o(1))\frac{\Delta}{\log(\Delta /(k+\sqrt{t}))}.
\]
\end{theorem}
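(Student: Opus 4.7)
The plan is to deduce Theorem~\ref{thm:boundedC,chi} from Theorem~\ref{thm:main,chi} by exhibiting a fugacity $\lam$ and parameters $\beta,\gam>0$ such that the hard-core model on $G$ at fugacity $\lam$ has local $(\beta,\gam)$-occupancy with $\beta+\gam\Delta=(1+o(1))\Delta/\log(\Delta/(k+\sqrt t))$, together with the auxiliary condition $\gam\Delta/\beta=\Delta^{\Omega(1)}$. Note that the two known extremes are consistent with this target: $t=0$ is essentially Theorem~\ref{thm:Cfree,chi}, while $k=3$ reduces to the bounded-triangle-count-in-neighbourhoods case, a natural strengthening of Theorem~\ref{thm:sparsenbhds,chi}. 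The task is to interpolate between these two verifications.

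I would set $\lam$ on the scale of $1/\log(\Delta/(k+\sqrt t))$, and aim for $\gam$ of the same order together with $\beta$ that is polynomially smaller than $\gam\Delta$, so that $\beta+\gam\Delta=(1+o(1))\gam\Delta$. The substance then lies in verifying the local occupancy inequality \eqref{eqn:main,nonlocal} for every induced subgraph $F$ of $G[N(u)]$. For this I would leverage the hard-core analysis of Subsection~\ref{sub:sparse,hcm}, which converts bounds on $Z_F(\lam)$ and $Z_F'(\lam)/Z_F(\lam)$ into local occupancy data, together with the iterative splitting procedure of Subsections~\ref{sub:sparsenbhds} and~\ref{sub:sparsenbhds,proof2}. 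Note that any induced subgraph $F\subseteq G[N(u)]$ still contains at most $t$ copies of $P_{k-1}$, since this count is monotone under taking induced subgraphs. The splitting would iteratively peel off a densest portion of $F$; on the sparse remainder one applies a ``$C_k$-free-like'' lower bound in the spirit of Subsection~\ref{sub:Cfree}, while any dense portion must generate many copies of $P_{k-1}$, which by the hypothesis limits the number of iterations.

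The main obstacle I anticipate is identifying the right invariant of $F$ that plays the role of edge-density in the sparse-neighbourhood analysis. In Theorem~\ref{thm:sparsenbhds,chi}, a single edge in $G[N(u)]$ corresponds to a triangle of $G$, so $f$ controls the edge density of $G[N(u)]$ directly; in our setting each $P_{k-1}$ in $G[N(u)]$ corresponds to a $C_k$ in $G$, so ``density'' must be measured through longer paths. I expect the Cauchy--Schwarz step that, in the $k=3$ case, bounds the triangle count in terms of the edge count to be replaced by an inequality relating numbers of vertices, of $P_{k-3}$-copies, and of $P_{k-1}$-copies of $F$. This is precisely what should account for the appearance of $\sqrt t$ and the additive contribution of $k$ in the denominator of the final bound: $k$ reflects the length of the forbidden (or rare) substructure, while $\sqrt t$ reflects its multiplicity through a convexity argument.

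Once local $(\beta,\gam)$-occupancy is established with parameters of the above orders, the assumption $\Delta/(k+\sqrt t)\to\infty$ translates into $\gam\Delta/\beta=\Delta^{\Omega(1)}$ and into the further technical condition required by Theorem~\ref{thm:main,chi}, so the chromatic bound $(1+o(1))\gam\Delta=(1+o(1))\Delta/\log(\Delta/(k+\sqrt t))$ follows directly.
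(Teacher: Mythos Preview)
Your overall target (establish local $(\beta,\gam)$-occupancy with $\beta+\gam\Delta\sim\Delta/\log(\Delta/(k+\sqrt t))$ and apply Theorem~\ref{thm:main,chi}) is correct, but the route you sketch is considerably more involved than what is needed, and parts of it are misdirected.

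The iterative splitting procedure of Subsections~\ref{sub:sparsenbhds} and~\ref{sub:sparsenbhds,proof2} is a \emph{global} partitioning of $V(G)$ used only to extend Corollary~\ref{cor:sparsenbhds} to the full range of $f$; it plays no role in analysing an individual subgraph $F\subseteq G[N(u)]$, and there is no analogue of it in the proof of Theorem~\ref{thm:boundedC,chi}. Likewise, the Cauchy--Schwarz-type inequality you anticipate relating counts of $P_{k-3}$ and $P_{k-1}$ is not needed. The paper's argument is the following elementary observation: if $F\subseteq G[N(u)]$ has $y$ vertices and at most $t$ copies of $P_{k-1}$, then deleting one edge from each such copy (so at most $t$ edges) leaves a $P_{k-1}$-free graph, which by Erd\H{o}s--Gallai has at most $y(k-3)/2$ edges. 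Hence $|E(F)|\le y(k-3)/2+t$, giving average degree at most $k-3+2t/y$; combining this with the trivial bound $y-1$ and splitting at $y=\sqrt{2t}$ yields $\mad(G[N(u)])\le k-3+\sqrt{2t}$.

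Once you have this uniform bound on $\mad(G[N(u)])$, the entire machinery of Lemma~\ref{lem:mad} applies with $a_u=k-3+\sqrt{2t}$, and the proof is then line-by-line identical to that of Theorem~\ref{thm:Cfree} with $k-3+\sqrt{2t}$ in place of $k-3$. So the ``right invariant of $F$'' you were looking for is simply its average degree, bounded via Erd\H{o}s--Gallai plus a deletion argument, not a higher path-count density.
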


\noindent
Note that with~$k=3$ and $t=\Delta^2/f$ we essentially have the form of Theorem~\ref{thm:sparsenbhds,chi}, as the requirement that $\Delta/(3+\sqrt{t}) \to\infty$ becomes the requirement that~$f\to\infty$.  We outline the relevant local occupancy analysis in Subsection~\ref{sub:boundedC}. 
The algorithmic question of whether the colourings guaranteed by Theorems~\ref{thm:Cfree,chi}--\ref{thm:boundedC,chi} can be constructed in polynomial time is tackled in our companion paper~\cite{DKPS20}. 
In short (and in the setting of list colouring), for constant $k$ and small enough $t$ such constructions follow from our framework given some significant additional work.

\smallskip
For our fourth setting, consider the situation where~$G$ has some prescribed local independent set structure. More concretely, suppose that there is some~$\rho\ge 1$ such that every neighbourhood induces a subgraph of Hall ratio at most~$\rho$. Note that by~\eqref{eqn:params,basic} taking~$\rho=1$ corresponds to the triangle-free case. 
Also by~\eqref{eqn:params,basic}, we see that this condition is satisfied if every neighbourhood in $G$ induces a subgraph of chromatic number at most~$\rho$. 
Motivated by the corresponding problem for bounded clique number (which we will discuss further shortly), Alon~\cite{Alo96} implicitly considered this problem setting and gave an upper bound on the independence number of such graphs~$G$. 
It was also considered by Johansson~\cite{Joh96a} and Molloy~\cite{Mol19} in the context of chromatic number, and more recently by Bonamy et al.~\cite{BKNP18+}. 
By optimising a local analysis of the hard-core model (given in Subsection~\ref{sub:entropy,hcm} and Section~\ref{sec:localhall}) and then applying Theorem~\ref{thm:main,chi}, we obtain the following (which we note also implies Theorem~\ref{thm:molloy}).

\begin{theorem}\label{thm:localhall,chi}
    There is a monotone increasing function $\fn\colon\interval[co]{1}{\infty}\to\interval[co]{1}{\infty}$ satisfying $\fn(1)=1$ and $\fn(\rho)=(1+o(1))\log \rho$ as $\rho\to\infty$ such that the following holds.
For any~$\rho\ge 1$ and graph~$G$ of maximum degree~$\Delta$ in which the neighbourhood of
every vertex $u\in V(G)$ induces a subgraph of Hall ratio at most~$\rho$, the chromatic number of~$G$ satisfies
$\chi(G) \le (\fn(\rho)+o(1))\Delta/\log \Delta$ as~$\Delta\to\infty$.
\end{theorem}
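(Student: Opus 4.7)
The plan is to apply Theorem~\ref{thm:main,chi}: this reduces the task to exhibiting, for each $\Delta$, positive reals $\lam,\beta,\gam$ such that the hard-core model on $G$ at fugacity $\lam$ has local $(\beta,\gam)$-occupancy and such that $\gam\le(\fn(\rho)+o(1))/\log\Delta$. The Hall ratio condition is hereditary, so it is enough to establish inequality~\eqref{eqn:main,nonlocal} for every graph $F$ with $\rho(F)\le\rho$. Equivalently, setting $n=|V(F)|$, one seeks lower bounds on $\EE|\bI_F|=\lam Z_F'(\lam)/Z_F(\lam)$ and $\Pr(\bI_F=\emptyset)=1/Z_F(\lam)$ in terms of $n$ and $\rho$.

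The heart of the proof is an entropy-based occupancy-fraction bound for the hard-core measure on such an $F$. The assumption $\rho(F)\le\rho$ yields a fractional cover of $V(F)$ by independent sets of total weight at most $\rho$. Writing the Shannon entropy of $\bI_F$ in two ways---once directly as $H(\bI_F)=\log Z_F(\lam)-\EE|\bI_F|\log\lam$, and once via a Shearer-style fractional subadditivity applied to this cover, noting that the hard-core measure restricted to any independent subset is stochastically dominated by a product Bernoulli$(\lam/(1+\lam))$ law---produces an inequality that rearranges to give $\EE|\bI_F|\ge (n/\rho)\cdot\lam/(1+\lam)\cdot(1-o(1))$ in the relevant regime, and correspondingly $Z_F(\lam)\le(1+\lam)^{n/\rho\cdot(1+o(1))}$. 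Substituting these into~\eqref{eqn:main,nonlocal} reduces its verification to a one-variable computation in $\lam$.

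Optimising this computation defines $\fn(\rho)$. For $\rho=1$ the analysis collapses to the one leading to~\eqref{eqn:betagam,triangle}, giving $\fn(1)=1$ and hence recovering Theorem~\ref{thm:molloy}; with the scaling $\lam=\Theta(1/\log\Delta)$, the Lambert-$W$ asymptotics of Subsection~\ref{sub:Lambert} then yield $\fn(\rho)=(1+o(1))\log\rho$ as $\rho\to\infty$. Monotonicity of $\fn$ follows because the family of admissible $(\beta,\gam)$-pairs only shrinks as $\rho$ increases. The side condition $\gam\Delta/\beta=\Delta^{\Omega(1)}$ required by Theorem~\ref{thm:main,chi} can be read off the explicit forms of $\beta$ and $\gam$ without further work.

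The principal obstacle is the entropy step. A Hall ratio bound only yields a fractional cover by independent sets rather than a genuine partition, and the hard-core measure is nonuniform even after conditioning on its intersection with one of these independent sets; combining these ingredients tightly enough to retain the sharp leading constant $\log\rho$ in the large-$\rho$ regime---so as to match Alon's~\cite{Alo96} independence number bound---is where the delicate work lies, since any slack in the Shearer-type application propagates directly into the leading constant of Theorem~\ref{thm:localhall,chi}.
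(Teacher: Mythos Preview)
There is a genuine gap at the core of your argument. You assert that a Hall-ratio bound $\rho(F)\le\rho$ ``yields a fractional cover of $V(F)$ by independent sets of total weight at most $\rho$'', and you build the entropy step on this cover via Shearer-style fractional subadditivity. But that implication is false: a fractional cover by independent sets of total weight at most $\rho$ is exactly the statement $\chi_f(F)\le\rho$, and the paper itself (end of Subsection~\ref{sub:structure}) recalls the construction of Dvo\v{r}\'ak, Ossona de Mendez and Wu~\cite{DOW18+} of graphs with Hall ratio at most~$18$ and unbounded fractional chromatic number. So the cover you want simply need not exist, and the Shearer-entropy step collapses. Your final paragraph flags the entropy step as delicate, but the issue is not delicacy---the starting object for your subadditivity is unavailable under the stated hypothesis.

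The paper's route avoids this entirely by using the Hall-ratio hypothesis in a much weaker way. From $\rho(G[N(u)])\le\rho$ one extracts only that any $y$-vertex subgraph $F$ of a neighbourhood contains an independent set of size at least $y/\rho$, hence $\log Z_F(\lam)\ge(y/\rho)\log(1+\lam)$ (Lemma~\ref{lem:localhall,localocc}\ref{itm:localhall,largeZ}). The entropy argument is then the \emph{vertex-by-vertex} subadditivity of Lemma~\ref{lem:shearerhcm}, which for \emph{any} graph $F$ gives $\lam Z_F'(\lam)/Z_F(\lam)\ge \log Z_F(\lam)/\fn(y\lam/\log Z_F(\lam))$; no cover is needed. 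Feeding the lower bound on $\log Z_F$ into the argument of $\fn$ yields $\EE|\bI_F|\ge \log Z_F(\lam)/k_u$ with $k_u=\fn(\rho_u\lam/\log(1+\lam))$, and the local-occupancy inequality becomes a one-variable convex minimisation in $z=\log Z_F(\lam)$. The function $\fn(\rho)=-W_{-1}(-1/(e\rho))$ arises from solving this entropy inequality exactly, not from optimising over $\lam$; its monotonicity, $\fn(1)=1$, and $\fn(\rho)\sim\log\rho$ are read off from standard properties of $W_{-1}$ (Subsection~\ref{sub:Lambert}). Replacing your fractional-cover step with this single-independent-set bound plus Lemma~\ref{lem:shearerhcm} would repair the argument and bring it in line with the paper.
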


\noindent
Again by the random regular graph, this bound is of the correct asymptotic order.
For details about the parameter~$\fn$, which improves upon all earlier leading
asymptotic constants, see Subsection~\ref{sub:Lambert}.  It is possible that
the bound in Theorem~\ref{thm:localhall,chi} is correct up to a multiplicative
constant independent of~$\rho$, but it is unclear to us how to devise a
construction certifying this.

\smallskip
Finally, suppose that~$G$ has bounded clique number~$\omega$. 
So $\omega=2$ corresponds to the triangle-free case. 
Equivalently, assume all neighbourhoods induce subgraphs that are $K_{\omega}$-free. 
This setting is classic (cf.~\cite{AEKS81}) and related to the central problem of determining the
asymptotic behaviour of the off-diagonal Ramsey numbers $R(1+\omega,k)$, for $\omega\ge 2$ fixed and $k\to\infty$. 
Our contribution in this setting (via Theorem~\ref{thm:main,chi} and the local hard-core analysis given in Subsection~\ref{sub:clique,hcm}) is the following result. 

\begin{theorem}\label{thm:clique,chi}
For any graph~$G$ of clique number $\omega$ and maximum degree~$\Delta$, the chromatic number of~$G$ satisfies as~$\Delta\to\infty$
\begin{align}
\chi(G) \le (1+o(1))\min\left\{(\omega-2)\frac{\Delta\log \log \Delta}{\log\Delta},5\Delta\sqrt{\frac{\log(\omega-1)}{\log\Delta}}\right\}.
\end{align}
\end{theorem}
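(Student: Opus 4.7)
The plan is to apply Theorem~\ref{thm:main,chi} separately for each of the two terms in the minimum, exhibiting in each case a fugacity $\lam$ and parameters $\beta,\gam>0$ that certify local $(\beta,\gam)$-occupancy of the hard-core model on $G$, with $\beta+\gam\Delta$ matching the claimed bound. Because $G$ has clique number $\omega$, any induced subgraph $F$ of a neighbourhood $G[N(u)]$ has clique number at most $\omega-1$: otherwise appending $u$ would produce a $(\omega+1)$-clique in $G$. Writing $p_\emptyset \coloneqq 1/Z_F(\lam)$ and $m \coloneqq \lam Z_F'(\lam)/Z_F(\lam)$ (the expected size of a hard-core sample on $F$), the condition~\eqref{eqn:main,nonlocal} reads
\[
\beta \frac{\lam}{1+\lam}\, p_\emptyset + \gam\, m \;\ge\; 1,
\]
so the essential task is to lower bound $m$ robustly for $K_\omega$-free graphs $F$ with $\Delta(F)\le\Delta$, absorbing the case of very small $F$ through a generous choice of $\beta$.

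For the first term $(\omega-2)\Delta\log\log\Delta/\log\Delta$, I would iterate the triangle-free hard-core occupancy estimate of~\cite{DJPR18} in the spirit of the recursive independent-set arguments of Shearer~\cite{She95} and Alon~\cite{Alo96}. Conditioning on a vertex $v\in V(F)$ and invoking the spatial Markov property, the hard-core occupancy on $F$ is controlled by the hard-core occupancy on $F[N_F(v)]$, whose clique number drops by one. Taking $\lam$ of order $\log\Delta/((\omega-2)\log\log\Delta)$ and inducting on $\omega$ with the triangle-free estimate as the base should give $m\ge (1-o(1))\log\Delta/((\omega-2)\log\log\Delta)$, and hence the required value of $\gam$. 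For the second term $5\Delta\sqrt{\log(\omega-1)/\log\Delta}$, I would use a one-shot argument better suited to the large-$\omega$ regime, combining a Tur\'an/Caro--Wei-style independence-number bound for $K_\omega$-free graphs (equivalently, the Hall-ratio perspective of Theorem~\ref{thm:localhall,chi} with $\rho\le\omega-1$) with $\alpha(F)\ge m$ and a choice of $\lam\in(0,1)$ that balances the two terms of the occupancy inequality; this should yield $m\ge (1-o(1))\tfrac{1}{5}\sqrt{\log\Delta/\log(\omega-1)}$. In both cases the ratio $\gam\Delta/\beta$ is polynomial in $\Delta$, so the further technical condition of Theorem~\ref{thm:main,chi} is automatic.

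The main obstacle is the inductive step for the first bound: one must show that passing from a hard-core occupancy estimate on $K_{\omega-1}$-free neighbourhoods to one on $K_\omega$-free neighbourhoods costs only a multiplicative factor $1+o(1)$, so that the dependence on $\omega-2$ is linear rather than exponential. Coordinating the choice of $\lam$, the per-level loss, and the triangle-free base case into a single clean recurrence is the delicate point; the second bound, by contrast, should follow relatively routinely once the clique-free independence-number estimate is translated into the hard-core language.
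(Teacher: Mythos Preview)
Your proposal diverges from the paper's approach in a way that leaves a real gap for the first bound, and takes a route that does not reach the second.

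For the $(\omega-2)\Delta\log\log\Delta/\log\Delta$ term, the paper does \emph{not} induct on $\omega$ via the spatial Markov property. Instead it argues directly on any $K_\omega$-free $F$ on $y$ vertices in two steps. First, a counting argument based on the Erd\H{o}s--Szekeres bound $R(\omega,\alpha)\le\binom{\alpha+\omega-2}{\omega-1}$ shows that $F$ has at least $(y/R)^\alpha$ independent sets of size $\alpha$; optimising $\alpha$ gives a lower bound on $z=\log Z_F(\lam)$ (Lemma~\ref{lem:clique,logZ}). Second, the entropy inequality of Lemma~\ref{lem:shearerhcm} converts this into
\[
\frac{\lam Z_F'(\lam)}{Z_F(\lam)}\;\ge\;\frac{z}{\fn(y\lam/z)}\;\ge\;\frac{(1-o(1))\,z}{(\omega-2)\log z},
\]
the factor $\omega-2$ appearing once, from the exponent in the Ramsey bound, rather than accumulating over $\omega-2$ inductive levels. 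Your proposed recursion would need each descent from $K_\omega$-free to $K_{\omega-1}$-free to cost only $1+o(1)$ multiplicatively, and you correctly flag this as the obstacle; I do not see how to make such a recursion close, and the paper avoids the issue entirely via the counting-plus-entropy mechanism.

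For the $5\Delta\sqrt{\log(\omega-1)/\log\Delta}$ term, invoking the Hall-ratio result with $\rho=\omega-1$ yields a bound of the form $(\fn(\omega-1)+o(1))\Delta/\log\Delta$, which is of a different shape and, crucially, requires $\rho$ fixed as $\Delta\to\infty$; the second bound in Theorem~\ref{thm:clique,chi} is only interesting when $\omega$ grows with $\Delta$. The paper again uses the Ramsey-counting route, this time applying the estimate $\log R\le(\alpha-1)\log\bigl(e(\omega-1)/(\alpha-1)+e\bigr)$ and optimising $\alpha\approx\tfrac{\log(y\lam)}{2\log(\omega-1)}$, which together with Lemma~\ref{lem:shearerhcm} gives $\lam Z_F'(\lam)/Z_F(\lam)\ge\bigl(\tfrac12-o(1)\bigr)\sqrt{z/\log(\omega-1)}$. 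The local-occupancy optimisation (Lemma~\ref{lem:clique,localocc}) then produces the stated constants.
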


\noindent
Ignoring the leading constants for a moment, the first bound is a bound of
Johansson~\cite{Joh96a} (cf.~also~\cite{PS15,BGG18,Mol19,Ber19,BKNP18+}), inspired by
the corresponding bound of Shearer~\cite{She95}
for independence number; and the second is a more recent bound of Bonamy, Kelly, Nelson and Postle~\cite{BKNP18+}, foreshadowed by an independence number bound of Bansal, Gupta and Guruganesh~\cite{BGG18}.
Theorem~\ref{thm:clique,chi} improves on all of these earlier bounds by some constant factor.
Keep in mind that for the first bound in terms of independence number, Ajtai, Erd\H{o}s, Koml\'{o}s and Szemer\'{e}di~\cite{AEKS81} have famously conjectured a better asymptotic \emph{order}, that the~$\log\log\Delta$ factor is unnecessary.

\smallskip
Just as for Theorem~\ref{thm:molloy} and its predecessor in Johansson's theorem~\cite{Joh96a}, the proof of Theorem~\ref{thm:main,chi} uses a randomised (list) colouring procedure and suitable applications of the Lov\'{a}sz local lemma to show that sufficiently distant events in the graph are close to independent.
Our approach in fact builds upon the work of Molloy~\cite{Mol19}, Bernshteyn~\cite{Ber19}, and some subsequent studies~\cite{BKNP18+,DJKP18+local}. In a nutshell, we have incorporated the hard-core model into the earlier proof method, where previous work had focused on the special case~$\lam=1$ of uniformly chosen independent sets (or partial proper colourings). We present and prove the full, general version of Theorem~\ref{thm:main,chi} in Section~\ref{sec:main}. An important feature of our framework is that the easier parts of it---essentially, for independence number---allow one to have a preview for the possible stronger results from the harder parts---essentially, for chromatic number---as they share a similar dependence on the local occupancy properties of the hard-core model.

\subsection{Framework in terms of local colouring}\label{sub:intro,local}

As mentioned, the bounds in Theorems~\ref{thm:shearer},~\ref{thm:molloy},~\ref{thm:Cfree,chi}--\ref{thm:localhall,chi} are each tight up to some constant factor (independent of~$|V(G)|$ and~$\Delta$), and this is due to some probabilistic constructions that have all vertex degrees equal. 
If the (hypothetical) extremal examples are indeed regular or having all degrees asymptotically equal, it would intuitively suggest that vertices of maximum degree are of primary importance for such problems. 
One might naturally wonder if vertices of lower degree are `easier' to colour in a quantifiable sense. This is the idea behind local colouring. 
It has its roots in degree-choosability as considered by Erd\H{o}s, Rubin and Taylor~\cite{ERT80}, and a more systematic study was recently carried out in~\cite{BKNP18+}, cf.~also~\cite{Kin09thesis,DJKP18+local}.  It turns out that local analysis of the hard-core model lends itself well to producing local colourings, in part because our framework easily incorporates a `more local' version of the condition in~\eqref{eqn:main,nonlocal}.

    Given a graph~$G$, a positive real~$\lam$, and a collection~${(\beta_u,\gam_u)}_u$ of pairs of positive reals indexed over the vertices of~$G$, we say that the hard-core model on $G$ at fugacity~$\lam$ has \emph{local ${(\beta_u,\gam_u)}_u$-occupancy} if, for each~$u\in V(G)$ and each induced subgraph~$F$ of the subgraph~$G[N(u)]$ induced by the neighbourhood of~$u$, it holds that
\begin{align}\label{eqn:main}
\beta_u\frac{\lam}{1+\lam} \frac{1}{Z_F(\lam)}+ \gam_u\frac{\lam Z_F'(\lam)}{Z_F(\lam)} \ge 1.
\end{align}
    For our strongest conclusions in terms of correspondence colouring (defined in Section~\ref{sec:prelim}), we may have use of the condition of \emph{strong local ${(\beta_u,\gam_u)}_u$-occupancy}, where we require that~\eqref{eqn:main} hold for all subgraphs~$F$, not necessarily the induced ones.

Our main theorem, Theorem~\ref{thm:main,chic,convenient} stated in Section~\ref{sec:main}, is an explicit, strong form of Theorem~\ref{thm:main,chi} above which says that under~\eqref{eqn:main} and some mild additional conditions the graph can be properly coloured so that every vertex $u$ of not too small a degree is coloured from a list of size not much larger that $\beta_u + \gam_u\deg(u)$. 
It turns out that it is little extra work to expand local occupancy in the sense of~\eqref{eqn:main,nonlocal}, with~$\beta$ and~$\gam$ depending on the maximum degree~$\Delta$, to local occupancy in the sense of~\eqref{eqn:main}, instead with~$\beta_u$ and~$\gam_u$ having a dependence on~$d_u$, for any local sequence~${(d_u)}_u$ of positive reals.
As a result, for example, a refinement of the local analysis that led to the choices in~\eqref{eqn:betagam,triangle} (see Section~\ref{sec:mad} or~\ref{sec:localhall}) leads to the following local version of Theorem~\ref{thm:molloy}, cf.~\cite{DJKP18+local}.

\begin{theorem}\label{thm:molloy,local}
For any~$\eps>0$ there exist~$\delta_0$ and~$\Delta_0$ such that the following holds for all $\Delta\ge\Delta_0$. Any triangle-free graph of maximum degree $\Delta$ admits a proper colouring in which each vertex~$u$ is coloured from 
\[ \left\{ 1,\dots,\left\lceil (1+\eps)\max\left\{  \frac{\deg(u)}{\log(\deg(u)/\log \Delta)},  \frac{\delta_0}{\log\delta_0}\log \Delta \right\} \right\rceil \right\}.\]
\end{theorem}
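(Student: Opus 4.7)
The plan is to derive Theorem~\ref{thm:molloy,local} as a direct application of Theorem~\ref{thm:main,chic,convenient} (the local version of Theorem~\ref{thm:main,chi}), where the input is a choice of pairs ${(\beta_u,\gam_u)}_u$ witnessing local $(\beta_u,\gam_u)$-occupancy of the hard-core model on~$G$ at a suitably chosen fugacity~$\lam$. Because~$G$ is triangle-free, for every $u\in V(G)$ the induced subgraph $G[N(u)]$ is edgeless, so any induced subgraph~$F$ of $G[N(u)]$ consists of $d$ isolated vertices for some $0\le d\le \deg(u)$, and therefore $Z_F(\lam)=(1+\lam)^d$ and $\lam Z_F'(\lam)/Z_F(\lam)=d\lam/(1+\lam)$. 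The condition~\eqref{eqn:main} thus reduces to a single inequality depending only on the parameter~$d$, and one checks (as in~\cite{DJPR18,DJKP18+local}) that it suffices to enforce it at $d=\deg(u)$, because the left-hand side is concave/monotonic in the right way once $\beta_u,\gam_u$ are chosen appropriately.

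Mimicking~\eqref{eqn:betagam,triangle} per vertex, I would take~$\lam=1/\log\Delta$ and set
\[
\gam_u=\frac{1+\lam}{\lam}\cdot\frac{\log(1+\lam)}{1+W(\deg(u)\log(1+\lam))},\qquad \beta_u=\frac{\gam_u(1+\lam)^{(1+\lam)/(\gam_u\lam)}}{e\log(1+\lam)},
\]
so that~$G$ enjoys local ${(\beta_u,\gam_u)}_u$-occupancy. With these choices the same algebraic identity as in the global case yields
\[
\beta_u+\gam_u\deg(u)=\frac{1+\lam}{\lam}\exp\bigl(W(\deg(u)\log(1+\lam))\bigr).
\]
Using the asymptotic identity $\exp(W(x))=x/W(x)$ and the estimates $W(x)=(1+o(1))\log x$ as $x\to\infty$ together with $\log(1+\lam)=(1+o(1))/\log\Delta$, for every~$u$ with $\deg(u)/\log\Delta\to\infty$ this quantity simplifies to $(1+o(1))\deg(u)/\log(\deg(u)/\log\Delta)$, which is the first term in the maximum appearing in the statement.

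For vertices of low degree (those with $\deg(u)$ not much larger than $\log\Delta$), the bound $\beta_u+\gam_u\deg(u)$ alone would become unusable, either because the quantity fails to concentrate or because the mild technical side conditions required by Theorem~\ref{thm:main,chic,convenient} (essentially a lower bound on the effective list size, needed for the Lov\'{a}sz local lemma application) are violated. The standard remedy is to choose, uniformly for these~$u$, a larger list size of $(1+\eps)\lceil (\delta_0/\log\delta_0)\log\Delta\rceil$; picking $\delta_0=\delta_0(\eps)$ sufficiently large so that $\delta_0/\log\delta_0$ exceeds the constant produced by the local analysis at degree~$\delta_0$ ensures that the local occupancy inequality still holds at such vertices with $\beta_u+\gam_u\deg(u)\le(1+\eps)(\delta_0/\log\delta_0)\log\Delta$. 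Taking the maximum of the two regimes gives exactly the list prescribed in the theorem.

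I expect the main obstacle to be the verification of the side conditions of Theorem~\ref{thm:main,chic,convenient} in the low-degree regime, and the careful transfer of the global optimization behind~\eqref{eqn:betagam,triangle} to a per-vertex one: one must check that with the proposed ${(\beta_u,\gam_u)}$ the local inequality~\eqref{eqn:main} is satisfied not just at $d=\deg(u)$ but for every induced $F\subset G[N(u)]$, and that the resulting $\gam_u\deg(u)/\beta_u$ is polynomially large (analogous to the $\gam\Delta/\beta=\Delta^{\Omega(1)}$ hypothesis), which is the place where the $\delta_0$ cutoff enters most delicately.
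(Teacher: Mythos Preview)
Your overall plan—feed a per-vertex version of~\eqref{eqn:betagam,triangle} into Theorem~\ref{thm:main,chic,convenient}—is exactly the route the paper takes (via Lemma~\ref{lem:mad} with $a_u=0$, i.e.\ the $k=3$ case of Theorem~\ref{thm:Cfree}\ref{itm:Cfree,chic}). But there is a genuine gap in your execution that costs you the leading constant.

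The list-size hypothesis~\eqref{eq:main,chic,simple,L} of Theorem~\ref{thm:main,chic,convenient} is
\[
|L(u)|\ \ge\ \beta_u\cdot\frac{\lam}{1+\lam}\frac{\ell}{1-\eta}\ +\ \gam_u\deg(u),
\qquad \eta=\sqrt{(7\log\Delta)/\ell},
\]
not $|L(u)|\ge\beta_u+\gam_u\deg(u)$. In the triangle-free case $Z_F(\lam)=(1+\lam)^{|V(F)|}$, so condition~\eqref{eq:main,chic,simple,Zlarge} forces $\ell\ge 8\log(8\Delta^4)/\log(1+\lam)$; for any $\lam=o(1)$ this makes the prefactor $\frac{\lam}{1+\lam}\frac{\ell}{1-\eta}$ asymptotic to $32\log\Delta$, \emph{not} to~$1$. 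Plugging your $(\beta_u,\gam_u)$ (which are optimised for the linear form $\beta_u+\gam_u\deg(u)$, i.e.\ for the fractional result of Theorem~\ref{thm:main,chif}) into the actual constraint therefore gives, e.g.\ at $\deg(u)=\Delta$, a bound of roughly $33\,\Delta/\log\Delta$ rather than $(1+\eps)\Delta/\log\Delta$.

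The fix, carried out in the paper, is to optimise the \emph{weighted} combination: set
\[
d_u\ =\ \frac{\max\{\deg(u),\ \delta_0\log\Delta\}}{\frac{\lam}{1+\lam}\frac{\ell}{1-\eta}}
\]
and take the $(\beta_u,\gam_u)$ from Lemma~\ref{lem:mad}\ref{itm:mad,hcm} with this $d_u$ (and $a_u=0$). Then the required lower bound on $|L(u)|$ becomes $\frac{\lam}{1+\lam}\frac{\ell}{1-\eta}\,(\beta_u+\gam_u d_u)=\deg(u)/W(D_u)$ with $D_u\sim \deg(u)\big/\!\bigl((\text{const})\log\Delta\bigr)$, and the asymptotics of $W$ recover $(1+o(1))\deg(u)/\log(\deg(u)/\log\Delta)$. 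The paper also chooses $\lam$ tending to~$0$ more slowly (e.g.\ $1/\lam=3\log\log\Delta$) so that $\ell=\omega(\log\Delta)$ and $\eta=o(1)$; your $\lam=1/\log\Delta$ would still work once the correct $d_u$ is used. Two smaller points: the local-occupancy inequality must hold for every $0\le d\le\deg(u)$, and the correct argument is that the convex function $g(y)=\frac{\lam}{1+\lam}\bigl(\beta_u(1+\lam)^{-y}+\gam_u y\bigr)$ attains its minimum value~$1$ at some $y^*$ (not merely at $y=\deg(u)$); and the ``$\gam\Delta/\beta=\Delta^{\Omega(1)}$'' clause you cite belongs to the informal Theorem~\ref{thm:main,chi}, whereas Theorem~\ref{thm:main,chic,convenient} instead asks you to specify $\ell$ and verify~\eqref{eq:main,chic,simple,Zlarge}.
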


\noindent
Analogous refinements of Theorems~\ref{thm:Cfree,chi}--\ref{thm:clique,chi} also hold; we defer their precise statements to later sections as we wish to also consider the settings of list and correspondence colouring which require further definitions.

One should take notice of the minimum list size condition (i.e.~the second
argument of the maximum) in Theorem~\ref{thm:molloy,local}, as well in later statements.
We remark that, if~$G$ is of minimum degree at least~$\delta_0\log\Delta$ then the list sizes are truly local, and one can interchangeably think of a minimum list size condition as we state, both here and later, or a condition on the minimum degree of~$G$.

Let us also point out that the minimum list size condition in Theorem~\ref{thm:molloy,local} has a modest
dependence on $\Delta$. 
As already shown~\cite{DJKP18+local} and as discussed later here, the corresponding condition for
a local \emph{fractional} colouring result needs no such dependence. Theorem~\ref{thm:molloy,local}  follows from a substantially
stronger \emph{list} colouring version. This version does demand, and indeed
needs, a minimum list size depending on~$\Delta$, which helps in the proof mainly for concentration
considerations. 
We discuss this issue further in Subsection~\ref{sub:local}.

\subsection{Organisation}

In Section~\ref{sec:prelim}, we give a quick overview of some terminology as well as some basic and guiding principles/tools from graph theory and probability.
In Section~\ref{sec:main}, we fully present and prove our main framework for producing global graph structure from local occupancy properties of the hard-core model.
In Section~\ref{sec:furtherprelim}, we give some further preliminary and general results needed to carry out the specific hard-core analyses for our applications.
In Sections~\ref{sec:mad}--\ref{sec:clique}, we perform several local occupancy analyses to prove Theorems~\ref{thm:Cfree,chi}--\ref{thm:clique,chi} within our framework.
In Section~\ref{sec:conc}, we give some concluding remarks as well as some hints for further study.

\section{Notation and preliminaries}\label{sec:prelim}

\subsection{Graph structure}\label{sub:structure}

Let $G$ be a graph. We write $V(G)$ and $E(G)$ (or just $V$ and $E$) for the vertex and edge sets of $G$, respectively.
If~$X$ and~$Y$ are two disjoint subsets of~$V(G)$, we let~$E_G(X,Y)$ be the set of edges of~$G$ with one
end-vertex in each of~$X$ and~$Y$.
Given~$u\in V(G)$, we write~$N_G(u)$ for the (open) neighbourhood of~$u$,
$\deg_G(u)=|N_G(u)|$ for the degree of~$u$, and $N_G[u] = \{u\}\cup N_G(u)$ for
the closed neighbourhood of~$u$. 
In all these, we often drop the subscript if it is unambiguous.

We write~$\cI(G)$ for the collection of \emph{independent sets}, i.e.~vertex subsets
inducing edgeless subgraphs, of~$G$, and the \emph{independence number~$\alpha(G)$}
of~$G$ is the size of a largest set from~$\cI(G)$. A \emph{proper $k$-colouring}
of~$G$ is a partition of~$V(G)$ into~$k$ sets from~$\cI(G)$, or equivalently a
mapping $c\colon V(G)\to [k]$ such that $c^{-1}(\{i\})\in \cI(G)$ for every~$i\in[k]$,
and the \emph{chromatic number~$\chi(G)$} of~$G$ is the least~$k$ for which~$G$
admits a proper $k$-colouring.

Given a probability distribution over~$\cI(G)$, writing~$\bI$ for the random
independent set, the \emph{occupancy fraction} of the distribution
is~$\EE|\bI|/|V(G)|$.  As noted earlier, when the distribution for~$\bI$ is the
hard-core model at fugacity $\lam>0$, the occupancy fraction may be rewritten
\[
\frac{\EE |\bI|}{|V(G)|} = \frac{\lam Z'_G(\lam)}{Z_G(\lam)|V(G)|}.
\]
Note again that $\alpha(G) \ge \EE|\bI|$.

We (first) define fractional colouring in probabilistic terms, in terms of
uniform occupancy. A \emph{fractional $k$-colouring} of~$G$ is a probability
distribution~$\bI$ over~$\cI(G)$ such that, writing $\bI$ for the random
independent set, it holds that $\Pr(v\in \bI) \ge 1/k$ for every vertex $v\in
V(G)$. The \emph{fractional chromatic number}~$\chi_f(G)$ is the least~$k$ for
which $G$ admits a fractional $k$-colouring. Note that a fractional
$k$-colouring of~$G$ has occupancy fraction at least~$1/k$, and so $\alpha(G)
\ge |V(G)|/\chi_f(G)$. One can also see fractional colouring as a relaxation of usual
colouring in that $\chi_f(G)\le \chi(G)$, which follows by defining the
fractional $k$-colouring arising from selecting uniformly at random
one of the independent sets associated to the colours of
a proper $k$-colouring of~$G$.  

One equivalent, but maybe more concrete, definition of a fractional $k$-colouring
of~$G$ is an assignment~$w$ of pairwise disjoint intervals
contained in $\interval[co]{0}{k}$ to independent sets of~$G$ such that $\sum_{I\in\cI(G), I \ni
v}|w(I)|= 1$  for all~$v\in V(G)$.  Such a colouring naturally induces an
assignment of subsets (of measure~$1$) to the vertices of~$G$, namely~$c(v)
=\bigcup_{I\in\cI(G)\,:\,I \ni v} w(I)$ for each~$v\in V(G)$, such that~$c(u)$
and~$c(v)$ are disjoint whenever $uv\in E(G)$. 

In another direction, we also consider structural parameters that are significantly stronger than the chromatic number.
A mapping $L\colon V(G)\to \binom{\ZZ^+}{k}$ is called a
\emph{$k$-list-assignment} of~$G$; a colouring $c\colon V(G)\to \ZZ^+$ is
called an \emph{$L$-colouring} if $c(v)\in L(v)$ for any $v\in V(G)$. We say that $G$
is \emph{$k$-choosable} if for any $k$-list-assignment $L$ of $G$ there is a
proper $L$-colouring of~$G$. The \emph{list chromatic number} (or \emph{choosability}) $\chi_\ell(G)$ of~$G$ is the least~$k$ such that~$G$ is
$k$-choosable.  By a constant $k$-list-assignment,~$G$ admits a proper
$k$-colouring if it is $k$-choosable.

The list chromatic number is a classic colouring parameter,
cf.~e.g.~\cite{Alo93}; however, we are also able to extend our framework to the
newer, but even stronger notion of correspondence chromatic number introduced
by Dvořák and Postle~\cite{DP18}. We mainly adopt the notation of
Bernshteyn~\cite{Ber19}.
Given a graph~$G$, a \emph{cover} of~$G$ is a pair $\sH = (L, H)$, consisting of a graph $H$ and a mapping $L \colon V(G) \to 2^{V(H)}$, satisfying the following requirements:
		\begin{enumerate}
			\item the sets $\{L(u) \,:\,u \in V(G)\}$ form a partition of $V(H)$;
			\item for every $u \in V(G)$, the graph $H[L(u)]$ is complete;
			\item if $E_H(L(u), L(v)) \neq \varnothing$, then either $u = v$ or $uv \in E(G)$;
			\item\label{item:matching} if $uv \in E(G)$, then $E_H(L(u), L(v))$ is a matching (possibly empty).
		\end{enumerate}
		A cover $\sH = (L, H)$ of $G$ is \emph{$k$-fold} if $|L(u)| = k$ for all $u \in V(G)$.
		An \emph{$\sH$-colouring} of $G$ is an independent set in $H$ of size $|V(G)|$.
The \emph{correspondence chromatic number} (or \emph{DP-chromatic
number}) $\chi_c(G)$ is the least $k$ for which $G$ admits an
$\sH$-colouring for any $k$-fold cover $\sH$ of $G$.  Note that every $k$-list-assignment $L$ translates into a $k$-fold cover (hence the common lettering), and an independent set in $H$ of size $|V(G)|$ corresponds to a proper $L$-colouring of $G$. This implies that $\chi_\ell(G) \le \chi_c(G)$.

Note that, as for (list) colouring, this stronger parameter remains amenable to inductive approaches. For instance, it can be related to the following density
parameter through a greedy colouring procedure.  Writing $\delta(H)$ for the
minimum degree of a graph $H$, the \emph{degeneracy} $\delta^*(G)$  of $G$ is
$\max_{H\subseteq G} \delta(H)$. 
The degeneracy of $G$ is clearly bounded by the maximum degree $\Delta(G)$ of $G$.
It is worth noting that we will also make use of the closely related density notion of \emph{maximum average degree} $\mad(G)$ of $G$, defined
to be
\[
\mad(G) = \max_{\substack{H\subseteq G\\|V(H)|\ge 1}} \frac{2|E(H)|}{|V(H)|},
\] 
and satisfies $\delta^*(G)\le \mad(G)<2\delta^*(G)$.

Synthesising the above discussion, here is an enlarged version of~\eqref{eqn:params,basic}:
\begin{align}\label{eqn:params}
    \omega(G)\le \rho(G) \le \chi_f(G) \le \chi(G)  &\le \chi_\ell(G)\\
    &\le \chi_c(G) \le \delta^*(G)+1\le \Delta(G)+1.
\end{align}

In this work, we focus on upper bounds for the second to the sixth parameters
in~\eqref{eqn:params}, especially in graph classes defined according to local
structural conditions (which in turn are often defined with parameters lying along the above sequence of
inequalities).

\smallskip
For perspective, we next make a few general remarks regarding strictness of
the inequalities in~\eqref{eqn:params}. 
For the first, we have, in relation to the sharpness of
Theorem~\ref{thm:shearer}, already mentioned the existence of a family of
graphs with clique number~$2$ over which~$\rho$ is unbounded.
For the second, it was only very recently shown by Dvořák, Ossona de
Mendez, and Wu~\cite{DOW18+} (cf.~also~\cite{BLM+19}) the existence of a
family of graphs with Hall ratios at most~$18$ and unbounded fractional
chromatic numbers.  It remains an interesting open problem of
Harris~\cite{Har19} to determine whether~$\rho(G)$ is always at least some
constant fraction of~$\chi_f(G)$ for triangle-free graphs~$G$.
For the third inequality in~\eqref{eqn:params}, it is well known that the
Kneser graphs~$KG_{n,k}$ (which are triangle-free if $n<3k$) satisfy
$\chi(KG_{n,k})=n-2k+2$ and $\chi_f(KG_{n,k})=n/k$, cf.~Lov\'{a}sz~\cite{Lov78}.
For the fourth to sixth inequalities, the complete $d$-regular bipartite
graph~$K_{d,d}$ satisfies $\chi(K_{d,d})=2$, $\chi_\ell(K_{d,d})=(1+o(1))\log_2
d$~\cite{ERT80}, $\chi_c(K_{d,d})=(1/2+o(1))d/\log d$~\cite{Ber16}, and
$\delta^*(K_{d,d})=d$ (as $d\to\infty$). 

\smallskip
As described in Subsection~\ref{sub:intro,local}, we in fact prove our results in
the context of more refined versions of $\chi_f$, $\chi$, $\chi_\ell$,
and $\chi_c$, by individually restricting the `amount' of colour available per
vertex. For $\chi_\ell$ and $\chi_c$, we do so by allowing the lower bound
condition on~$|L(v)|$ to vary as a function of parameters of~$N(v)$, such as
the degree~$\deg(v)$ of the vertex~$v$. On the other hand, for~$\chi_f$ and~$\chi$ we do so by moreover demanding (as in Subsection~\ref{sub:intro,local})
that the colour or set of colours assigned to~$v$ be chosen only from an
interval (of length depending on~$\deg(v)$) whose left endpoint is at the
origin.  For the local colouring results we prove in the former situation, we
invariably assume some mild but uniform
minimum list size that is defined in terms of the maximum degree of the graph.
We discuss this further in Subsection~\ref{sub:local}.

\subsection{Probabilistic tools}\label{sub:prob}

Given a probability space, the $\{0,1\}$-valued random variables~$\bX_1,\dotsc,\bX_n$ are
\emph{negatively correlated} if for each subset~$S$ of~$\{1,\dotsc,n\}$,
\[
    \Pr\big(\bX_i=1, \forall i\in S\big)\le\prod_{i\in S}\Pr(\bX_i=1).
\]
\begin{chernoffnegcor}[\cite{PS97}]
    Given a probability space, let~$\bX_1, \dotsc, \bX_n$ be $\{0,1\}$-valued random variables.
    Set~$\bX = \sum_{i=1}^{n}\bX_i$ and~$\bY_i = 1-\bX_i$ for each~$i\in\{1,\dotsc,n\}$.
If the variables~$\bX_1,\dotsc,\bX_n$ are negatively correlated, then
\[
    \forall \delta>1,\quad \Pr\big(\bX\ge(1+\delta)\EE\bX\big) \le e^{-\delta\EE\bX/3}.
\]
If the variables~$\bY_1,\dotsc,\bY_n$ are negatively correlated, then
\[
    \forall \eta\in(0,1),\quad \Pr\big(\bX\le(1-\eta)\EE\bX\big) \le e^{-\eta^2\EE\bX/2}.
\]
\end{chernoffnegcor}

\begin{llll}[\cite{ErLo75}, cf.~\cite{AS16book}]
Given a probability space, let $\cE=\{A_1,\dots,A_n\}$ be a set of (bad) events.
Suppose for each $i\in[n]$ there is some $\Gam(i)\subseteq[n]$ such that $|\gam(i)|\le d$ and for all $S\subseteq [n]\setminus\Gam(i)$ it holds that
\[
\Pr\left(A_i \mathrel{\Bigg|} \bigcap_{j\in S} \overline{A_j}\right) \le p.
\]
If $4pd\le 1$, then the probability that none of the events in $\cE$ occur is positive.
\end{llll}


\section{The framework}\label{sec:main}

In this section we describe our framework in better detail now that we are equipped with the requisite notation from Subsection~\ref{sub:structure}. This continues the discussion we began in Subsections~\ref{sub:intro,chi} and~\ref{sub:intro,local}.

\subsection{The general theorems}\label{sub:theorems}

The condition~\eqref{eqn:main,nonlocal} of local occupancy is very close to a lower bound on the occupancy fraction of the hard-core model in $G$, and the following result has an elementary proof following from basic properties of the hard-core model, see Subsection~\ref{sub:ideas}. 

\begin{theorem}[cf.~\cite{DJKP18+local,DJKP18+occupancy,DJPR17,DJPR18}]\label{thm:main,occfrac}
Suppose $G$ is a graph of maximum degree~$\Delta$ such that the hard-core model on~$G$ at fugacity~$\lam$ has local $(\beta,\gam)$-occupancy for some $\lam,\beta,\gam>0$. Then its occupancy fraction satisfies
\[
\frac{1}{|V(G)|}\frac{\lam Z'_G(\lam)}{Z_G(\lam)} \ge \frac{1}{\beta+\gam\Delta}.
\]
In particular, the independence number of $G$ satisfies
\[
\alpha(G) \ge \frac{|V(G)|}{\beta+\gam\Delta}.
\]
\end{theorem}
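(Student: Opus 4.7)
The plan is to establish the pointwise inequality
\[
\beta\Pr(u\in\bI)+\gam\EE|\bI\cap N(u)|\ge 1
\]
for each $u\in V(G)$, and then aggregate. Summing this over $u$ gives $\beta\EE|\bI|+\gam\sum_u\EE|\bI\cap N(u)|\ge|V(G)|$, and the double-counting identity $\sum_u|\bI\cap N(u)|=\sum_v\deg(v)\indicator{v\in\bI}\le\Delta|\bI|$ then yields $(\beta+\gam\Delta)\EE|\bI|\ge|V(G)|$, which (using the identity $\EE|\bI|=\lam Z_G'(\lam)/Z_G(\lam)$ for the occupancy fraction and $\alpha(G)\ge\EE|\bI|$ for the independence number) is exactly the claim.

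To derive the pointwise inequality I would invoke the spatial Markov property of the hard-core model. Conditioning on $\bI\setminus N[u]$, the distribution of $\bI\cap N[u]$ is the hard-core model on the graph formed by attaching $u$ as a universal vertex to $F_u$, the induced subgraph of $G[N(u)]$ on those neighbours of $u$ having no neighbour in $\bI\setminus N[u]$. This graph has partition function $Z_{F_u}(\lam)+\lam$, so
\[
\Pr(u\in\bI\mid\bI\setminus N[u])=\frac{\lam}{Z_{F_u}+\lam},\qquad \EE[|\bI\cap N(u)|\mid\bI\setminus N[u]]=\frac{\lam Z_{F_u}'}{Z_{F_u}+\lam}.
\]
As $F_u$ is always an induced subgraph of $G[N(u)]$, the pointwise inequality (conditionally on the outside) reduces to showing that $\beta\lam+\gam\lam Z_F'\ge Z_F+\lam$ for every induced subgraph $F$ of $G[N(u)]$, starting from the local occupancy hypothesis $\beta\lam/((1+\lam)Z_F)+\gam\lam Z_F'/Z_F\ge 1$.

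The hard part will be noticing that this reduction is not valid in isolation for a single $F$: it needs the auxiliary inequality $\beta\ge(1+\lam)/\lam$. Fortunately this is silently encoded in the local occupancy hypothesis via the trivial case $F=\emptyset$, where $Z_F=1$ and $Z_F'=0$ force $\beta\lam/(1+\lam)\ge 1$. With this in hand, rearranging the hypothesis as $\gam\lam Z_F'\ge Z_F-\beta\lam/(1+\lam)$ and adding $\beta\lam$ to both sides gives
\[
\beta\lam+\gam\lam Z_F'\ge Z_F+\beta\lam\cdot\frac{\lam}{1+\lam}\ge Z_F+\lam,
\]
and dividing by $Z_F+\lam$ produces the pointwise inequality conditional on the outside. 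Taking expectation over $\bI\setminus N[u]$ and then summing over $u$ finishes the proof.
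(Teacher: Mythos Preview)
Your proof is correct, and the overall architecture---establish the pointwise inequality $\beta\Pr(u\in\bI)+\gam\EE|\bI\cap N(u)|\ge 1$ for each vertex and then aggregate by double counting---is exactly the paper's. The aggregation step is identical.

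Where you diverge is in how you derive the pointwise inequality. You condition on $\bI\setminus N[u]$, which leaves $u$ attached as a universal vertex to the uncovered neighbourhood $F_u$ and produces the denominator $Z_{F_u}+\lam$. This forces you into an algebraic detour: rearranging the hypothesis and pulling out the auxiliary bound $\beta\ge(1+\lam)/\lam$ from the $F=\varnothing$ instance of local occupancy. That trick is valid and neat, but it is extra work. The paper instead applies the spatial Markov property with $X=N(u)$ (and separately with $X=\{u\}$), obtaining
\[
\Pr(u\in\bI)=\frac{\lam}{1+\lam}\,\EE\frac{1}{Z_{\bF_{N(u)}}(\lam)},\qquad
\EE|\bI\cap N(u)|=\EE\frac{\lam Z'_{\bF_{N(u)}}(\lam)}{Z_{\bF_{N(u)}}(\lam)}.
\]
These expressions are \emph{exactly} the two terms appearing in the local $(\beta,\gam)$-occupancy hypothesis evaluated at the random induced subgraph $\bF_{N(u)}$ of $G[N(u)]$, so the pointwise inequality follows immediately by taking the expectation of the hypothesis---no rearrangement, no appeal to the empty-graph case. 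In short: same skeleton, but the paper's choice of conditioning set makes the hypothesis slot in verbatim, whereas yours requires one more idea to close the gap.
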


A crucial realisation, made essentially in~\cite{DJKP18+local}, is that the same argument in conjunction with a greedy colouring procedure~\cite[Lem.~3]{DJKP18+local}, cf.~also~\cite[Ch.~21]{MoRe02}, leads to the fractional relaxation of Theorem~\ref{thm:molloy} (and any other setting in which we can prove local occupancy). 
Moreover, under the condition in~\eqref{eqn:main} instead of~\eqref{eqn:main,nonlocal}, it permits a clean local formulation. 
From the arguments given below in Subsection~\ref{sub:ideas} together with~\cite[Lem.~3]{DJKP18+local}, we obtain the following.

\begin{theorem}\label{thm:main,chif}
    Suppose $G$ is a graph such that the hard-core model on~$G$ at fugacity~$\lam$ has local ${(\beta_u,\gam_u)}_u$-occupancy for some~$\lam>0$ and some collection ${(\beta_u,\gam_u)}_u$ of pairs of positive reals.
Then $G$ admits a fractional colouring in which each vertex $u$ is coloured with a subset of the interval
\[
\interval[co]{0}{\beta_u+\gam_u\deg(u)}.
\]
In particular, the fractional chromatic number of $G$ satisfies
\[
\chi_f(G)\le \beta+\gam\Delta,
\] 
where $\beta=\max_u\beta_u$, $\gam=\max_u\gam_u$, and~$\Delta$ is the maximum degree of~$G$.
\end{theorem}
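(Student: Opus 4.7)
The plan is a local sharpening of the occupancy-fraction argument behind Theorem~\ref{thm:main,occfrac}, followed by the iterative conversion from per-vertex occupancy bounds into fractional colourings recorded in~\cite[Lem.~3]{DJKP18+local}.

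\textbf{Step 1: per-vertex inequality.} Fix $u\in V(G)$ and sample $\bI$ from the hard-core model on $G$ at fugacity $\lam$, then condition on $\bJ\coloneqq \bI\setminus N[u]$. The spatial Markov property identifies the conditional law of $\bI\cap N[u]$ with the hard-core model on $G[\{u\}\cup V(F)]$, where $F$ is the induced subgraph of $G[N(u)]$ on those neighbours of~$u$ having no neighbour in~$\bJ$. Hence
\[
    \Pr(u\in\bI\mid\bJ)=\frac{\lam}{\lam+Z_F(\lam)}, \qquad \EE\bigl[|\bI\cap N(u)|\bigm|\bJ\bigr]=\frac{\lam Z'_F(\lam)}{\lam+Z_F(\lam)}.
\]
Apply local $(\beta_u,\gam_u)$-occupancy at $F$, and separately at $F=\emptyset$ (where $Z_\emptyset(\lam)=1$ and $Z'_\emptyset(\lam)=0$ force $\beta_u\ge(1+\lam)/\lam$). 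A short manipulation then yields $\beta_u\Pr(u\in\bI\mid\bJ)+\gam_u\EE[|\bI\cap N(u)|\mid\bJ]\ge 1$; taking expectations over $\bJ$ gives
\[
    \beta_u\Pr(u\in\bI)+\gam_u\EE|\bI\cap N(u)|\ge 1.
\]

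\textbf{Step 2: heredity.} Local $(\beta_u,\gam_u)$-occupancy descends to every induced subgraph $H\subseteq G$, because any induced subgraph of $H[N_H(u)]$ is also an induced subgraph of $G[N_G(u)]$. Consequently the Step~1 inequality holds for the hard-core distribution on $H$ at each $u\in V(H)$, with $N(u)$ replaced by $N_H(u)$.

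\textbf{Step 3: iterative colouring.} Feed this uniform family of distributions, together with the weights $(\beta_u,\gam_u)$, into the greedy fractional-colouring scheme of~\cite[Lem.~3]{DJKP18+local}: iteratively sample the hard-core independent set on the currently active subgraph and assign an infinitesimal sliver $\mathrm{d} t$ of colour to it. Each vertex $u$ accrues self-coverage at rate $\Pr(u\in\bI)$ and consumes "neighbour budget" at rate $\EE|\bI\cap N(u)|$; by Step~1 these combine so that $u$ receives its measure-$1$ allotment of colour from an interval contained in $\interval[co]{0}{\beta_u+\gam_u\deg(u)}$, using $\EE|\bI\cap N(u)|\le\deg(u)$ for the worst-case neighbour cost. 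The in-particular bound $\chi_f(G)\le\beta+\gam\Delta$ follows by taking maxima. The main obstacle is precisely the bookkeeping in this last step: the two types of progress (self- versus neighbour-coverage) must be interleaved so that the colour actually received by each $u$ lies in $\interval[co]{0}{\beta_u+\gam_u\deg(u)}$, and not merely so that the total colour mass used by the scheme is bounded; this is the technical content of~\cite[Lem.~3]{DJKP18+local} that I invoke as a black box.
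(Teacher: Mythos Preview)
The proposal is correct and follows essentially the same route as the paper: derive the per-vertex inequality $\beta_u\Pr(u\in\bI)+\gam_u\EE|\bI\cap N(u)|\ge 1$ from local occupancy via the spatial Markov property, observe heredity, and invoke \cite[Lem.~3]{DJKP18+local}. The only cosmetic difference is in Step~1, where you reveal $\bI\setminus N[u]$ and therefore need the auxiliary bound $\beta_u\ge(1+\lam)/\lam$ from the $F=\varnothing$ case; the paper instead applies spatial Markov separately to $X=\{u\}$ and to $X=N(u)$, producing expressions that match the local-occupancy inequality~\eqref{eqn:main} directly without that extra step.
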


\noindent
The importance of this realisation is that it hints at a generalisation via the hard-core model and, simultaneously, an extension along~\eqref{eqn:params} of Theorem~\ref{thm:shearer}.
In this work we confirm both of these subject to some mild additional conditions.
The following is our main result and the list and correspondence colouring generalisation of Theorem~\ref{thm:main,chi}. 

\begin{theorem}\label{thm:main,chic,convenient}
    Suppose that $G$ is a graph of maximum degree $\Delta\ge 2^6$ such that the hard-core model on~$G$ at fugacity~$\lam$ has local ${(\beta_u,\gam_u)}_u$-occupancy for some $\lam>0$ and some collection ${(\beta_u,\gam_u)}_u$ of pairs of positive reals.
Suppose also for some $\ell>7\log\Delta$ that we are given a cover $\sH=(L,H)$ of $G$ that arises from a list-assignment of~$G$, and that satisfies for all $u\in V(G)$ that
\begin{align}\label{eq:main,chic,simple,L}
|L(u)| \ge \beta_u\frac{\lam}{1+\lam}\frac{\ell}{1-\sqrt{(7\log\Delta)/\ell}}+\gam_u\deg(u),
\end{align}
and  
\begin{align}\label{eq:main,chic,simple,Zlarge}
Z_F(\lam) \ge 8\Delta^4,
\end{align}
for all induced subgraphs $F\subset G[N(u)]$ on at least $\ell/8$ vertices.
Then $G$ admits an $\sH$-colouring.

    If~$\sH$ does not arise from a list-assignment, then strong local ${(\beta_u,\gam_u)}_u$-occupancy is sufficient for an $\sH$-colouring.
\end{theorem}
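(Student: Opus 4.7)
The plan is to follow the Molloy--Bernshteyn randomised colouring approach (as adapted in~\cite{Ber19,BKNP18+,DJKP18+local}), but with the tentative colour of each vertex sampled from the hard-core distribution on the cover graph $H$ rather than uniformly from its list. I will produce a random partial $\sH$-colouring together with a random ``residual list'' at each vertex, and argue via the Lov\'asz local lemma that, with positive probability, the residual lists can be greedily completed to a full $\sH$-colouring. Concretely, I would sample $\bI$ from the hard-core model on $H$ at fugacity $\lam$; by property~(ii) of covers, $|\bI \cap L(u)|\in\{0,1\}$, so $\bI$ already realises a valid partial $\sH$-colouring of the set $\bS$ of vertices $u$ with $|\bI\cap L(u)|=1$. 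For the uncoloured vertices $u\notin\bS$, set
\[
\bL(u) = \{c\in L(u) : N_H[c]\cap\bI = \varnothing\} \quad\text{and}\quad \bY(u) = N_G(u)\setminus\bS,
\]
the random residual list and residual neighbourhood respectively.

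The next step is to bound $\EE|\bL(u)|$ from below and $\EE|\bY(u)|$ from above by conditioning on the hard-core configuration in $L(N(u))$ and integrating out the random ``pre-forbidden'' edges from $L(u)$. For each outcome of this conditioning, the induced subgraph of $G[N(u)]$ formed by the neighbours whose colour in $\bI$ is incompatible with some given colour of $L(u)$ is exactly of the form $F\subseteq G[N(u)]$ appearing in~\eqref{eqn:main}; the local $(\beta_u,\gam_u)$-occupancy inequality is precisely the algebraic identity that transforms the hypothesis~\eqref{eq:main,chic,simple,L} on $|L(u)|$ into
\[
\EE|\bL(u)| \;\geq\; \frac{\ell}{1-\sqrt{(7\log\Delta)/\ell}} \;-\; \bigl(\gam_u\deg(u)-\EE|\bY(u)|\bigr),
\]
up to handling of the $\lam/(1+\lam)$ factor. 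The quantity of interest, $|\bL(u)|-|\bY(u)|$, is then seen to have expectation at least $\ell$.

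For concentration, I would invoke the Chernoff bound for negatively correlated variables from Subsection~\ref{sub:prob} applied to the indicators $\mathbf{1}[c\in\bL(u)]$ and $\mathbf{1}[v\in\bY(u)]$; the requisite negative correlation is a standard property of the hard-core measure. The role of~\eqref{eq:main,chic,simple,Zlarge} is quantitative: when $Z_F(\lam)\geq 8\Delta^4$ on all large-enough induced neighbourhood subgraphs, the individual indicator probabilities are bounded away from $0$ strongly enough that a deviation of $\sqrt{(7\log\Delta)/\ell}\cdot\ell$ is absorbed, yielding $\Pr(B_u)\leq \Delta^{-4}$ for the bad event $B_u=\{|\bL(u)|\leq|\bY(u)|\}$. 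Since $B_u$ is measurable with respect to the part of $\bI$ in $L(N^2[u])$, the dependency graph has degree $O(\Delta^3)$, and the Lov\'asz local lemma yields a realisation where no $B_u$ occurs. On this event, any greedy ordering of the uncoloured vertices extends the partial colouring to a full $\sH$-colouring, because each uncoloured $u$ has a strictly larger residual list than its (still-)uncoloured neighbourhood.

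The main obstacle is Step~2, namely the bookkeeping that converts the local-occupancy inequality into a bound on $\EE|\bL(u)|-\EE|\bY(u)|$. The cleaner case is when $\sH$ arises from a list-assignment: a colour $c\in L(u)$ is then blocked exactly by the neighbours $v$ whose chosen colour equals $c$, and the relevant subgraph of $G[N(u)]$ (its vertex set being the neighbours whose lists contain $c$) really is \emph{induced}, matching the hypothesis~\eqref{eqn:main} on induced $F$. For a general cover this is false: the matching from property~\eqref{item:matching} can identify arbitrary pairs of colours across neighbouring lists, so the subgraph carved out by a given colour $c$ need not be induced. This is exactly why the second clause of the theorem demands \emph{strong} local occupancy, i.e.~that~\eqref{eqn:main} hold on all (not necessarily induced) subgraphs $F\subseteq G[N(u)]$; with that stronger hypothesis, the identical argument goes through.
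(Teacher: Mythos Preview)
Your high-level plan (sample $\bI$ from the hard-core model on $H$, control a bad event at each vertex via the lopsided local lemma, then finish deterministically) matches the paper's architecture, and your final paragraph correctly explains why the list-assignment case needs only induced $F$ whereas general covers need strong local occupancy. The problem lies in the choice of bad event and in the role you assign to the two hypotheses.

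Your bad event $B_u=\{|\bL(u)|\le|\bY(u)|\}$ is not controllable under the stated hypotheses. Summing the local-occupancy inequality over $x\in L(u)$ and using $\sum_x|\bI'_{\Lambda_x}|\le \deg(u)-|\bY(u)|$ gives
\[
\beta_u\,\tfrac{\lam}{1+\lam}\,\EE|\bL(u)| + \gam_u\bigl(\deg(u)-\EE|\bY(u)|\bigr)\ \ge\ |L(u)|,
\]
hence $\EE|\bL(u)|\ge \frac{\ell}{1-\eta}+\frac{(1+\lam)\gam_u}{\beta_u\lam}\,\EE|\bY(u)|$. This does \emph{not} yield your displayed inequality, nor $\EE(|\bL(u)|-|\bY(u)|)\ge\ell$, unless $\frac{(1+\lam)\gam_u}{\beta_u\lam}\ge1$, which is not assumed and typically fails. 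In the regimes of interest $\lam$ is small, so most vertices remain uncoloured and $|\bY(u)|$ is of order $\deg(u)$, which can dwarf $|L(u)|$; your event $B_u$ then holds with probability close to $1$, not $\Delta^{-4}$. (The paper explicitly remarks that the ``uncoloured-neighbour'' event, used in earlier work, gives worse constants; here it does not even go through with~\eqref{eq:main,chic,simple,L}.)

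Relatedly, you have misidentified the role of~\eqref{eq:main,chic,simple,Zlarge}. It is not used for Chernoff concentration of $|\bL(u)|$. The paper's bad event at $u$ has two parts: (a) $|L_{\bI}(u)|<\ell$, handled by Chernoff on the negatively-correlated indicators $\mathbf{1}[x\notin L_{\bI}(u)]$ using only~\eqref{eq:main,chic,simple,L}; and (b) some surviving $x\in L_{\bI}(u)$ has $\deg^*_{\sH_{\bI}}(x)>\ell/8$. For (b) one observes that if $x$ survives then $\deg^*_{\sH_{\bI}}(x)=|\bU_{\Lambda_x}|$, and whenever $|\bU_{\Lambda_x}|>\ell/8$ the spatial Markov property gives $\Pr(x\in L_{\bI}(u))=1/Z_{\bF_{\Lambda_x}}(\lam)\le 1/(8\Delta^4)$ by~\eqref{eq:main,chic,simple,Zlarge}; a union bound over $x\in L(u)$ then suffices. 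The completion step is not greedy: it is a second local-lemma argument (the ``finishing blow'', Lemma~\ref{lem:phase2}) that uses $|L_{\bI}(u)|\ge\ell$ together with the per-colour bound $\deg^*_{\sH_{\bI}}(x)\le\ell/8$. Controlling the colour-degree of each surviving colour, rather than the raw count of uncoloured neighbours, is the missing idea.
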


\noindent
Observe that the lower bound on $|L(u)|$ in~\eqref{eq:main,chic,simple,L} is equal to 
\[
\frac{\lam}{1+\lam}\frac{\ell}{1-\eta}\left(\beta_u + \gam_u\frac{\deg(u)}{\frac{\lam}{1+\lam}\frac{\ell}{1-\eta}}\right),
\]
where $\eta=\sqrt{(7\log\Delta)/\ell}<1$.
In all of our applications we are able to show local ${(\beta_u, \gam_u)}_u$-occupancy with a family of parameters that depends on some arbitrary local sequence~${(d_u)}_u$ of positive reals.
For the strongest fractional colouring statements our method can give, we take $d_u=\deg(u)$ and minimise $\beta_u+\gam_u\deg(u)$ subject to local ${(\beta_u, \gam_u)}_u$-occupancy. 
The reformulation above shows that we can reuse this same optimisation problem for list/correspondence colouring, except that we minimise 
\[
\beta_u + \gam_u\frac{\deg(u)}{\frac{\lam}{1+\lam}\frac{\ell}{1-\eta}}.
\]
In both cases we are interested in the choices of $\beta_u,\gam_u>0$ which, subject to local ${(\beta_u, \gam_u)}_u$-occupancy, minimise $\beta_u+\gam_u d_u$ for the given sequence~${(d_u)}_u$.

Roughly, our framework shows how one obtains a good fractional chromatic number result via an understanding of the hard-core model on the level of expectation, and how with enough verified to guarantee concentration one also obtains a comparable result for (list/correspondence) chromatic number.

A reader may wish to consider Theorem~\ref{thm:main,chic,convenient} and~\cite[Thm.~1.13]{BKNP18+}, another result with related conclusions, in juxtaposition. 
Our framework has at its heart the hard-core model, which has conceptual and technical advantages:
\begin{itemize}
\item most importantly, it allows us to match or surpass all of the best asymptotic results in the area, smoothly extending the seminal bound of Shearer~\cite{She83} to various locally sparse graph classes;
\item in terms of occupancy fraction, the bounds we obtain within our framework are asymptotically tight in some cases (e.g.~triangle-free graphs, cf.~\cite{DJPR18}), presenting a natural limit to these methods;
\item it naturally threads along~\eqref{eqn:params}, from Hall ratio through (importantly!) fractional chromatic number to list/correspondence colouring, via local occupancy;
\item in the comparison between local occupancy and strong local occupancy, it provides an intuitive distinction between list colouring and correspondence colouring; and
\item
it lends itself to conveniently producing good local results as per Subsection~\ref{sub:intro,local}, indeed matching or bettering earlier results in this direction~\cite{BKNP18+,DJKP18+local} (see Subsection~\ref{sub:local}).
\end{itemize}

\subsection{Main ideas and proof of Theorem~\ref{thm:main,occfrac}}\label{sub:ideas}

To clarify the motivation for our approach, we discuss in detail the properties of the hard-core model that we exploit. 
This section serves as a warm-up for the proof of Theorem~\ref{thm:main,chic,convenient} and comprises a proof of Theorem~\ref{thm:main,occfrac}.
Moreover, the product of these arguments, fed to~\cite[Lem.~3]{DJKP18+local}, yields Theorem~\ref{thm:main,chif}.
This is an aggregation and distillation of ideas earlier substantiated~\cite{DJKP18+local,DJKP18+occupancy,DJPR17,DJPR18}. 

Given a graph $G$ of maximum degree $\Delta$, let $\bI$ be drawn from the hard-core model on $G$ at fugacity $\lam$. 
We are interested in a lower bound on $\EE |\bI|$, as this implies a lower bound on the independence number $\alpha(G)$. 
We may rewrite $\EE |\bI|$, the expected number of vertices of $G$ occupied by $\bI$, in terms of the partition function:
\[
\EE |\bI| = \sum_{I\in\bI(G)} |I| \frac{\lam^{|I|}}{Z_G(\lam)}= \frac{\lam Z'_G(\lam)}{Z_G(\lam)}.
\]
We shall rely on a special local property of the hard-core model, which essentially states that $\bI$ restricted to certain induced subgraphs is itself distributed as the hard-core model (at the same fugacity).

More precisely, given $X\subseteq V(G)$, write $\bF_X$ for the random subgraph
of $G[X]$ induced by the set of vertices obtained from the following random
experiment. Reveal $\bI\setminus X$ and let the set $\bU_X$ of \emph{externally
uncovered} vertices of $X$ consist of those vertices in $X$ with no neighbour
in $\bI\setminus X$. Then let $\bF_X$ be the subgraph of $G$ induced
by $\bU_X$. Formally,
\[ \bF_X = G[X\setminus N(\bI\setminus X)].  \]
All of our results depend on the following fundamental fact about the behaviour of $\bI\cap X$ in terms of $\bF_X$.
As an aside, we remark that this fact was key to Bernshteyn's application of the lopsided
Lov\'{a}sz local lemma~\cite[Eqn.~(\#)]{Ber19}, and, as we will see later, is just as important
for us.

\begin{spatialmarkov}
    For any graph $G$ and any $X\subseteq V(G)$,
    if~$\bI$ is a random independent set drawn from the hard-core model on~$G$
    at fugacity~$\lam>0$, then $\bI\cap X$ is distributed according to the
    hard-core model on $\bF_X\coloneqq G[X\setminus N(\bI\setminus X)]$ at fugacity~$\lam$.
\end{spatialmarkov}
\begin{proof}
Let $I_0$ be an arbitrary independent set of $G-X$ and
let us condition on the fact that $\bI\setminus X=I_0$. It follows that $\bI\cap X$ is contained
    in $X\setminus N(I_0)$. For any independent set $I_1$ contained
    in $X\setminus N(I_0)$, we have
\begin{align*}
    \Pr(\bI\cap X= I_1\mid \bI\setminus X=I_0)&=\frac{\Pr(\bI=I_0\cup I_1 \wedge \bI\setminus X=I_0)}{\Pr(\bI\setminus X=I_0)}
    =\frac{\Pr(\bI=I_0\cup I_1)}{\Pr(\bI\setminus X=I_0)}\\
    &=\frac{\lam^{|I_0\cup I_1|}}{Z_G(\lam)}\cdot\frac{Z_G(\lam)}{\sum_{I\in\cI(\bF_X)}\lam^{|I\cup I_0|}}
    =\frac{\lam^{|I_1|}}{\sum_{I\in\cI(\bF_X)}\lam^{|I|}},
\end{align*}
which completes the proof as $Z_{\bF_X}(\lam) = \sum_{I\in\cI(\bF_X)}\lam^{|I|}$.
\end{proof}

Armed with this fact, and given $u\in V(G)$, let us now consider the
terms~$\Pr(u\in\bI)$, the probability that~$u$ is occupied by $\bI$,
and $\EE|N(u)\cap\bI|$, the expected number of neighbours of~$u$ occupied
by~$\bI$, in turn.  For convenience, we write~$\bI_{N(u)}$ for an independent
set drawn from the hard-core model on~$\bF_{N(u)}$ at fugacity~$\lam$.

We derive the conditional probability $\Pr(u\in\bI \mid \bI\cap N(u)=\varnothing)$ by revealing~$\bI\setminus \{u\}$. The spatial Markov property implies that it is $\lam/(1+\lam)$. Thus
\begin{align*}
\Pr(u\in\bI) & = \Pr(u\in\bI \wedge \bI\cap N(u)=\varnothing) = \frac{\lam}{1+\lam} \Pr(\bI\cap N(u) = \varnothing) \\
& = \frac{\lam}{1+\lam} \Pr(\bI_{N(u)} = \varnothing) = \frac{\lam}{1+\lam} \EE\frac{1}{Z_{\bF_{N(u)}}(\lam)},
\end{align*}
where we used the spatial Markov property again in the second line. Similarly, 
\begin{align*}
\EE|N(u)\cap\bI| = \EE|\bI_{N(u)}|= \EE\frac{\lam Z'_{\bF_{N(u)}}(\lam)}{Z_{\bF_{N(u)}}(\lam)}.
\end{align*}

Now we see where condition~\eqref{eqn:main} enters: since $\bF_{N(u)}$ is an induced subgraph of $G[N(u)]$ we deduce from~\eqref{eqn:main} that
\begin{align}\label{eqn:ideas,closed}
\beta_u \Pr(u\in \bI) + \gam_u\EE|\bI\cap N(u)|
& = \EE\left[\beta_u\frac{\lam}{1+\lam} \frac{1}{Z_{\bF_{N(u)}}(\lam)}+ \gam_u\frac{\lam Z'_{\bF_{N(u)}}(\lam)}{Z_{\bF_{N(u)}}(\lam)}\right] \\
 = \sum_F \Pr(\bF_{N(u)}&=F)\left(\beta_u\frac{\lam}{1+\lam} \frac{1}{Z_F(\lam)}+ \gam_u\frac{\lam Z'_F(\lam)}{Z_F(\lam)} \right)
 \ge 1,
\end{align}
where the summation runs over all induced subgraphs~$F$ of~$G[N(u)]$.
Although~\eqref{eqn:main} is more general, the above motivates the label `local occupancy'.

Writing $\beta=\max_u \beta_u$ and $\gam=\max_u \gam_u$, we then have
\[
    \forall u\in V(G),\quad\beta\Pr(u\in \bI) + \gam\EE|\bI\cap N(u)|\ge 1,
\]
and summed over $u\in V(G)$ this gives
\[
    \beta\EE|\bI| + \gam\Delta\EE|\bI| \ge |V(G)|,
\]
because each vertex $v$ appears $\deg(v)\le\Delta$ times in
\[
\sum_{u\in V(G)}\sum_{v\in N(u)}\Pr(v\in \bI) = \sum_{u\in V(G)}\EE|\bI\cap N(u)|.
\]
So we have
\[
    \alpha(G) \ge \EE|\bI| \ge \frac{|V(G)|}{\beta + \gam\Delta},
\]
which by uniformly taking $(\beta_u,\gam_u)=(\beta,\gam)$ is (a slightly stronger version of) the independence number conclusion required for Theorem~\ref{thm:main,occfrac}.

\subsection{Proof of Theorem~\ref{thm:main,chic,convenient}}\label{sub:genproofs}

We next show that our main result, Theorem~\ref{thm:main,chic,convenient}, follows from a more technical statement. Since we envisage future utility of this more general result, we state it now and give its proof after showing how it immediately implies Theorem~\ref{thm:main,chic,convenient}.

\begin{theorem}\label{thm:main,chic}
    Suppose~$G$ is a graph of maximum degree $\Delta$ such that the hard-core model on~$G$ at fugacity~$\lam$ has local ${(\beta_u,\gam_u)}_u$-occupancy for some~$\lam>0$ and some collection ${(\beta_u,\gam_u)}_u$ of pairs of positive reals.
    Suppose, for some collection ${(\ell_u,\eta_u)}_u$ of pairs of positive reals satisfying~$\eta_u<1$ for all~$u$, that we are given a cover $\sH=(L,H)$ of~$G$ that arises from a list-assignment of~$G$, and that satisfies for all~$u\in V(G)$ that
\begin{align}\label{eqn:m_u}
\frac{1+\lam}{\beta_u\lam}(|L(u)|-\gam_u\deg(u)) \ge \max\left\{\frac{\ell_u}{1-\eta_u},\,\frac{6\log(2\Delta)}{\eta_u^2}\right\},
\end{align}
and  
\begin{align}\label{eq:Zlarge}
Z_F(\lam) \ge 8|L(u)|\Delta^3\,
\end{align}
for all induced subgraphs $F\subset G[N(u)]$ on more than $\ell^*_u\coloneqq\min_{v\in N(u)}\ell_v/8$ vertices.
Then~$G$ admits an $\sH$-colouring.

    If $\sH$ does not arise from a list-assignment, then  strong local ${(\beta_u,\gam_u)}_u$-occupancy is sufficient for an $\sH$-colouring.
\end{theorem}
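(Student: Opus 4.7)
The plan is to adapt the Molloy--Bernshteyn randomised colouring framework by drawing the initial tentative partial colouring from the hard-core model on the cover graph $H$ rather than from an independent Bernoulli sample on each list, which is exactly what lets the extra information encoded in local occupancy feed through into list and correspondence chromatic bounds. We perform a single random experiment: sample $\bI$ from the hard-core model on $H$ at fugacity $\lam$. Since each $H[L(u)]$ is a clique, $\bI$ meets each $L(u)$ in at most one vertex, which we interpret as a tentative colour for $u$. Let $L'(u) \coloneqq L(u) \setminus (\bI \cup N_H(\bI))$ denote the residual list and let $\deg'(u)$ count the neighbours $v$ of $u$ in $G$ for which $\bI \cap L(v) = \varnothing$. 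The aim is to use the lopsided Lov\'{a}sz local lemma to establish that with positive probability, for every $u$, either $\bI \cap L(u) \neq \varnothing$ or $|L'(u)| > \deg'(u)$; if this event occurs then we colour each $u$ with $\bI\cap L(u)$ when available, and complete greedily on the subproblem induced by the uncoloured vertices with lists $L'(\cdot)$.

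The expectation analysis is where the hard-core machinery and the spatial Markov property enter. Fix $u$ and apply the spatial Markov property with $X = L(u)$: conditional on $\bI \setminus L(u)$, the intersection $\bI \cap L(u)$ is hard-core on the clique $\bF_{L(u)}$ spanned by the externally uncovered colours, which has $|L'(u)|$ vertices. In a second step we apply the spatial Markov property with $X = \bigcup_{v \in N(u)} L(v)$ and decompose $|L(u)| - |L'(u)|$ into blocked-colour indicators, one per matched pair $(c,c')$ with $c \in L(u)$, $c' \in L(v) \cap \bI$. Projecting this distribution down to $G$, the externally uncovered configuration on $\bigcup_{v\in N(u)} L(v)$ yields a random subgraph $F \subseteq G[N(u)]$---an induced subgraph in the list-assignment case, a general subgraph in the correspondence case---and the local $(\beta_u,\gam_u)$-occupancy condition~\eqref{eqn:main} applied to every such $F$ gives, after combining by convexity,
\[
\EE|L'(u)| \;\ge\; |L(u)| - \gam_u\deg(u) - (\text{a correction from the }\beta_u\lam/(1+\lam)\text{ term}),
\]
exactly mirroring the passage from the occupancy identities to $\beta+\gam\Delta$ that underlies the proof of Theorem~\ref{thm:main,occfrac}.

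For concentration we use the Chernoff bound for negatively correlated variables stated in Subsection~\ref{sub:prob}: the blocked-colour indicators arise from disjoint matchings and inherit negative correlation from the hard-core distribution on $H$. The lower bound $|L(u)|-\gam_u\deg(u) \ge \beta_u\lam/(1+\lam) \cdot 6\log(2\Delta)/\eta_u^2$ built into~\eqref{eqn:m_u} is precisely calibrated so that the Chernoff deviation probability is at most $(2\Delta)^{-3}$; separately, condition~\eqref{eq:Zlarge} handles the atypical case that the uncovered neighbourhood subgraph is unusually sparse, since on such outcomes $\Pr(u\text{ uncoloured}\mid\cdot) \le 1/Z_F(\lam)\le(8|L(u)|\Delta^3)^{-1}$ and so the contribution of small $F$ to both expectation and failure probability is negligible. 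Finally, the bad event at $u$ is measurable with respect to $\bI$ on $\bigcup_w L(w)$ for $w$ at graph distance at most $2$ from $u$; there are $O(\Delta^2)$ such $w$, and the hard-core model admits the negative lopsided dependency structure exploited by Bernshteyn~\cite{Ber19}, so the lopsided LLL applies once $4\cdot O(\Delta^2)\cdot (2\Delta)^{-3}\le 1$, which is absorbed into the implicit constants when $\Delta$ is large enough.

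The distinction between the two conclusions of the theorem corresponds exactly to the two forms of local occupancy. For list-assignments, the matching between $L(u)$ and $L(v)$ identifies a common colour, so when we project the hard-core structure on $\bigcup_{v\in N(u)}L(v)$ down to a subgraph $F$ of $G[N(u)]$, this $F$ is always induced---whence~\eqref{eqn:main} applied only over induced $F$ suffices. For a general cover the matchings are unconstrained, the projected $F$ need not be induced, and we therefore need~\eqref{eqn:main} to hold for all subgraphs of $G[N(u)]$: exactly strong local $(\beta_u,\gam_u)$-occupancy. The main technical obstacle is the second spatial Markov computation: one must cleanly translate the expected contribution of each neighbour's tentative colour into the $G$-side occupancy expression $\beta_u\lam/(1+\lam)\cdot\Pr(u\in\cdot) + \gam_u\EE|\bI_G\cap N(u)|$ that~\eqref{eqn:main} is built to bound, and to do so uniformly enough that the Chernoff/LLL steps can be plugged in at the end without losing the constants that our applications will require.
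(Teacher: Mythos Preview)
Your overall architecture is right---sample $\bI$ from the hard-core model on $H$, analyse residual lists via spatial Markov and local occupancy, then apply the lopsided local lemma---but the bad event you choose and the role you assign to~\eqref{eq:Zlarge} are both off in ways that break the argument. Your finishing criterion is the greedy condition $|L'(u)|>\deg'(u)$, so your bad event requires control of $\deg'(u)$, the number of uncoloured $G$-neighbours of $u$; yet you never analyse this quantity (your Chernoff step only concerns $|L'(u)|$), and without such an analysis the criterion is simply not established. The paper does not attempt to bound the per-vertex uncoloured degree either: instead it tracks the \emph{per-colour} residual degree $\deg^*_{\sH_\bI}(x)$ for each surviving $x\in L_\bI(u)$ and replaces greedy completion by an LLL-based finishing blow (Lemma~\ref{lem:phase2}) that only needs $|L_\bI(u)|\ge\ell_u$ and $\deg^*_{\sH_\bI}(x)\le\ell^*_u$. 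The resulting bad event depends on $\bI\cap L(N^2[u])$, which is why the dependency set is $N^3[u]$ rather than $N^2[u]$. (A vertex-degree event along your lines was used in~\cite{BKNP18+}; the paper notes this loses leading constants and is replaced here precisely to recover them.)

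Your reading of~\eqref{eq:Zlarge} is also inverted: it does not bound $\Pr(u\text{ uncoloured})$, since that probability is governed by the clique $H[L(u)]$ and has nothing to do with $Z_F$ for $F\subset G[N(u)]$. The correct use is colour-by-colour. For $x\in L(u)$ with layer $\Lam_x=N_{H^*}(x)\subset L(N(u))$, spatial Markov gives $\Pr(x\in L_\bI(u)\mid \bI'\setminus\Lam_x)=1/Z_{\bF_{\Lam_x}}(\lam)$, and $\bF_{\Lam_x}$ is isomorphic to a subgraph of $G[N(u)]$ on $|\bU_{\Lam_x}|$ vertices. When $|\bU_{\Lam_x}|>\ell^*_u$---exactly the case in which a surviving $x$ would have $\deg^*_{\sH_\bI}(x)>\ell^*_u$---condition~\eqref{eq:Zlarge} forces $Z_{\bF_{\Lam_x}}(\lam)\ge 8|L(u)|\Delta^3$, so such $x$ almost never survive, and a union bound over $x\in L(u)$ handles the degree part of the bad event at total cost $1/(8\Delta^3)$. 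This colour-by-colour mechanism (Lemma~\ref{lem:deg} in the paper) is the missing piece of your proposal.
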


\begin{proof}[Proof of Theorem~\ref{thm:main,chic,convenient}]
Without loss of generality $|L(u)|\le\Delta$, or else we can easily colour $u$ last; so the assumption~\eqref{eq:main,chic,simple,Zlarge} on $Z_F(\lam)$ when $F$ is an induced subgraph of~$G[N(u)]$ implies~\eqref{eq:Zlarge}.

We take a uniform choice $\eta_u = \eta = \sqrt{(7\log\Delta)/\ell}$, which by the assumption on~$\ell$ is less than~$1$.
Then as~$\Delta\ge 2^6$ we have
\[
\frac{\ell}{1-\eta}\ge \ell = \frac{7\log\Delta}{\eta^2} \ge \frac{6\log(2\Delta)}{\eta^2}.
\]
To satisfy~\eqref{eqn:m_u} we observe that our choice of lower bound on~$|L(u)|$ and the above calculations mean
\[
\frac{1+\lam}{\beta_u\lam}(|L(u)|-\gam_u\deg(u)) \ge \frac{\ell}{1-\eta} \ge \frac{6\log(2\Delta)}{\eta^2}.
\]
Thus an application of Theorem~\ref{thm:main,chic} completes the proof.
\end{proof}

The rest of this section is devoted to the proof of Theorem~\ref{thm:main,chic}. 
We use a two-phase method for proving the existence of $\sH$-colourings via the hard-core model and the Lov\'{a}sz local lemma. 
This builds upon several previous proofs beginning with the breakthrough of Molloy~\cite{Mol19}, cf.~\cite{Ber19,BKNP18+,DJKP18+local}. Note that all previous work regarded only the uniform case, whereas here it is crucial that we extend to the general hard-core model.

For the proof, we have chosen to adopt the terminology of Bernshteyn~\cite{Ber19}.
Suppose we are given a graph~$G$ and a cover $\sH=(L,H)$ of~$G$. 
For~$U\subset V(G)$, define $L(U) =\bigcup_{u\in U}L(u)$.
Define~$H^*$ to be the subgraph of~$H$ obtained by removing the edges inside~$L(u)$ for all~$u\in V(G)$; as Bernshteyn does, we write $\deg^*_{\sH}(x)$ instead of~$\deg_{H^*}(x)$. 
For $I\in \cI(H)$, the \emph{domain} of~$I$ is $\dom(I) = \{u\in V(G)\, :\, L(u)\cap I\ne\varnothing\}$. 
Any set~$I\in \cI(H)$ corresponds to a partial $\sH$-colouring of~$G$, where the set of coloured vertices of~$G$ is~$\dom(I)$.
We have convenient subscript notation to refer to the uncoloured graph that remains and its induced cover. 
Let~$G_I = G-\dom(I)$ and $\sH_I=(L_I,H_I)$ be the cover of~$G_I$ given by~$H_I=H-N_H[I]$ and $L_I(u)=L(u)\setminus N_H[I]$ for all~$u\in V(G_I)$. 
Now by definition if $I'$ is an $\sH_I$-colouring of~$G_I$ then $I\cup I'$ is an $\sH$-colouring of~$G$. 

The concluding, second phase is standard in probabilistic graph colouring (cf.~\cite{MoRe02}) and is often referred to as the `finishing blow'. The version we employ is a local form, and it follows easily from the Lov\'{a}sz local lemma, albeit without any attempt to optimise the multiplicative constant~$1/8$.

\begin{lemma}[\cite{DJKP18+local}]\label{lem:phase2}
Let $\sH = (L,H)$ be a cover of a graph $G$. Suppose there is a function $\ell \colon V(G) \to \NN_{\ge 3}$, such that $|L(u)| \ge \ell(u)$ for all $u\in V(G)$ and $\deg^*_\sH(x) \le \min_{v\in N(u)}\ell(v)/8$ for all $x \in L(u)$. Then $G$ is $\sH$-colourable. 
\end{lemma}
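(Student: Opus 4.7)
The plan is a direct application of the asymmetric Lov\'{a}sz local lemma to a uniformly random transversal of the cover. Independently, for each $u \in V(G)$, pick $c(u) \in L(u)$ uniformly at random, producing a set $I_c = \{c(u) : u \in V(G)\}$ that meets every $L(u)$ in exactly one vertex. Since $H[L(u)]$ is complete, every independent set in $H$ meets each $L(u)$ in at most one vertex, so $I_c$ is an $\sH$-colouring if and only if it is independent in $H$. By property (iii) of a cover, and since $H^*$ is obtained from $H$ by deleting edges within each $L(u)$, $I_c$ fails to be independent precisely when some $uv \in E(G)$ has $c(u)c(v) \in E(H^*)$. I therefore define, for each $uv \in E(G)$, the bad event $B_{uv} \coloneqq \{c(u)c(v) \in E(H^*)\}$, and aim to show with positive probability that no $B_{uv}$ occurs.

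The key estimate controls the total bad-event probability incident to any one vertex. Fix $u \in V(G)$. Every edge of $H^*$ incident to $L(u)$ goes to $L(w)$ for some $w \in N(u)$, by property (iii) and the deletion of intra-class edges, hence
\[
    \sum_{w \in N(u)} |E_{H^*}(L(u), L(w))| \;=\; \sum_{x \in L(u)} \deg^*_{\sH}(x) \;\le\; |L(u)| \cdot \frac{\min_{w \in N(u)} \ell(w)}{8}.
\]
Combined with $|L(w)| \ge \ell(w) \ge \min_{w' \in N(u)} \ell(w')$, this yields
\[
    \sum_{w \in N(u)} \Pr(B_{uw}) \;=\; \sum_{w \in N(u)} \frac{|E_{H^*}(L(u),L(w))|}{|L(u)|\,|L(w)|} \;\le\; \frac{1}{8},
\]
and in particular every $p_{uv} \coloneqq \Pr(B_{uv})$ is at most $1/8$. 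Each $B_{uv}$ depends only on $c(u)$ and $c(v)$, so it is mutually independent from any collection $\{B_{u'v'} : \{u,v\}\cap\{u',v'\}=\varnothing\}$.

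Now I invoke the general (asymmetric) Lov\'{a}sz local lemma with weights $x_{uv} = 2 p_{uv} \le 1/4$. Using $1 - t \ge 4^{-t}$ for $t \in [0,1/2]$ and the bound from the previous step,
\[
    \sum_{B_{u'v'} \in \Gamma(B_{uv})} x_{u'v'} \;\le\; \sum_{w \in N(u)} 2 p_{uw} + \sum_{w \in N(v)} 2 p_{vw} \;\le\; \tfrac{1}{2},
\]
so $\prod_{B' \in \Gamma(B_{uv})} (1 - x_{B'}) \ge 4^{-1/2} = 1/2$, and the LLL hypothesis $p_{uv} \le x_{uv} \prod (1-x_{B'})$ reduces to $1 \le 2 \cdot (1/2)$, which holds (and can be made strict by perturbing the weights by an arbitrarily small amount). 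The LLL then guarantees a positive-probability outcome in which no bad event occurs, and such an outcome is precisely an $\sH$-colouring of $G$.

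The main delicate choice, rather than any hard calculation, is the granularity of the bad events: a per-vertex formulation (``$c(u)$ conflicts with some neighbour'') has dependency neighbourhood of size $\Theta(\Delta^2)$ and loses the benefit of the hypothesis, whereas the per-edge formulation causes the crucial quantity $\sum_{x \in L(u)} \deg^*_{\sH}(x)$ to absorb both the summation over $N(u)$ and the $|L(u)|$ factor in the denominator, giving a uniform bound of $1/8$ independent of $\Delta$, of $|L(u)|$, and of the variation of $\ell$ across $N(u)$, which is exactly what the asymmetric LLL needs.
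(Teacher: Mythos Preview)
Your proof is correct and follows exactly the approach the paper indicates: the lemma is quoted from~\cite{DJKP18+local} without proof, with the remark that it ``follows easily from the Lov\'{a}sz local lemma,'' and your argument is precisely such a derivation via the asymmetric LLL applied to per-edge bad events of a uniform random transversal. The key estimate $\sum_{w\in N(u)}\Pr(B_{uw})\le 1/8$ and the weight choice $x_{uv}=2p_{uv}$ are clean and standard; note that the asymmetric LLL only requires the non-strict inequality $\Pr(A_i)\le x_i\prod_{j\in\Gamma(i)}(1-x_j)$, so the perturbation remark is unnecessary.
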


\noindent
There are two interrelated conditions in the above lemma which guarantee an $\sH$-colouring, first that there are large enough lists, and second that these lists do not create too much local competition for colours. 

\begin{standing}
    From here until the end of the section, we will always assume that~$G$ is a graph and $\sH=(L,H)$ is a cover of~$G$ satisfying the conditions of Theorem~\ref{thm:main,chic} for some $\lam>0$ and some collection ${(\beta_u,\gam_u,\ell_u)}_u$.
\end{standing}

In the main, first phase of the method, it will suffice to find some $I\in \cI(H)$, i.e.~a partial $\sH$-colouring of $G$, such that, for the uncoloured graph~$G_I$ that remains, the induced cover~$\sH_I$ satisfies the two conditions of Lemma~\ref{lem:phase2}.
That is, the conclusion of Theorem~\ref{thm:main,chic} follows from the following lemma.

\begin{lemma}\label{lem:phase1}
    There exists $I\in \cI(H)$ such that $|L_I(u)|\ge\ell_u$ and $\deg^*_\sH(x) \le \ell^*_u$ for all~$x\in L_I(u)$ and all~$u\in V(G_I)$.
\end{lemma}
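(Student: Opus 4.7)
The plan is to use the (lopsided) Lov\'{a}sz local lemma applied to a random partial $\sH$-colouring drawn from the hard-core model. Sample $\bI$ from the hard-core model on $G$ at fugacity $\lam$, and independently for each $u\in\bI$ pick a colour $c(u)\in L(u)$ uniformly at random. By cover property~(iii) together with the independence of $\bI$ in $G$, the set $I^*:=\{c(u):u\in\bI\}$ is automatically independent in $H$, so it is a partial $\sH$-colouring with $\dom(I^*)=\bI$. For each $u\in V(G)$ introduce two bad events: $A_u$ that $u\notin\bI$ but $|L_{I^*}(u)|<\ell_u$, and $B_u$ that $u\notin\bI$ but some $x\in L_{I^*}(u)$ has $\deg^*_\sH(x)>\ell^*_u$. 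Since $\deg^*_{\sH_{I^*}}(x)\le\deg^*_\sH(x)$, the absence of all bad events leaves $I=I^*$ fulfilling both conditions of the lemma.

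To bound $\Pr(A_u)$, condition on $u\notin\bI$ and invoke the spatial Markov property, so that $\bI\cap N(u)$ is distributed as the hard-core model on the random induced subgraph $\bF_{N(u)}$ of $G[N(u)]$. A colour $x\in L(u)$ survives precisely when no $v\in\bI\cap N(u)$ picks the (by property~(iv), unique) matching-neighbour of $x$ in $L(v)$. Computing the expected number of surviving colours by integrating over realisations of $\bF_{N(u)}$ and invoking local $(\beta_u,\gam_u)$-occupancy~\eqref{eqn:main} on each realisation should yield
\[
\EE\bigl[|L_{I^*}(u)|\bigm|u\notin\bI\bigr]\ge \frac{1+\lam}{\beta_u\lam}\bigl(|L(u)|-\gam_u\deg(u)\bigr)\ge \frac{\ell_u}{1-\eta_u},
\]
the second inequality being assumption~\eqref{eqn:m_u}. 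A Chernoff bound for negatively correlated indicators, the negative correlation inherited from the hard-core distribution, then gives $\Pr(A_u)\le\exp(-\eta_u^2\ell_u/(2(1-\eta_u)))$, small enough by the second branch of~\eqref{eqn:m_u}. A parallel but simpler argument handles $\Pr(B_u)$: any $x$ with $\deg^*_\sH(x)>\ell^*_u$ has so many potential killers in $H^*$ that~\eqref{eq:Zlarge} forces $\Pr(x\in L_{I^*}(u))\ll\Delta^{-4}$, whence a union bound over the at most $|L(u)|\le\Delta$ colours at $u$ controls $\Pr(B_u)$.

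Each event $A_u$ or $B_u$ depends only on $\bI$ and the colour choices at vertices within graph distance~$2$ of~$u$, so in a suitable dependency structure each event has at most $O(\Delta^4)$ neighbours. Since $\bI$ is not drawn from a product distribution, we appeal to the lopsided Lov\'{a}sz local lemma, with the negative association of the hard-core model (manifested through the spatial Markov property when passing to a subgraph) supplying the required conditional bound. The hypotheses $\Delta\ge 2^6$ and the lower bound on $|L(u)|$ via~\eqref{eqn:m_u} then translate into the numerical inequality required by the local lemma, giving positive probability that no bad event occurs and hence the existence of the desired~$I$.

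The main technical obstacle is the expectation computation for $\EE[|L_{I^*}(u)|\mid u\notin\bI]$: it must be executed cleanly enough to extract exactly the coefficient $\tfrac{1+\lam}{\beta_u\lam}(|L(u)|-\gam_u\deg(u))$ prescribed by~\eqref{eqn:m_u}, and in parallel the survival indicators must be shown to have the negative correlation needed for concentration. The distinction between list-assignment covers and general correspondence covers enters here: in the list-assignment case the spatial Markov property produces only induced subgraphs of $G[N(u)]$, so~\eqref{eqn:main} on induced subgraphs suffices; in the general case the matching edges of $H$ are not constrained by the induced edges of $G[N(u)]$, forcing control of arbitrary subgraphs and motivating the \emph{strong} local occupancy hypothesis.
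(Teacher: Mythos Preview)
Your proposal has a genuine gap at its very first step: you sample $\bI$ from the hard-core model on the \emph{base graph} $G$ and then assign colours uniformly, whereas the paper samples $\bI$ directly from the hard-core model on the \emph{cover graph} $H$. This is not a cosmetic difference; it is exactly what makes the local occupancy hypothesis usable.

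Consider your expectation computation for $|L_{I^*}(u)|$. Conditional on $\bI\cap N(u)=S$, a colour $x\in L(u)$ survives with probability $\prod_{v\in S\cap M_x}(1-1/|L(v)|)$, where $M_x$ is the set of neighbours $v$ carrying a match to $x$. Averaging over the hard-core distribution of $S$ on $\bF_{N(u)}$ gives a ratio of weighted partition functions, not $1/Z_F(\lam)$ for any subgraph $F$ of $G[N(u)]$. Local $(\beta_u,\gam_u)$-occupancy is a statement about $1/Z_F(\lam)$ and $\lam Z_F'(\lam)/Z_F(\lam)$; neither quantity appears in your survival probability, so there is no way to ``invoke~\eqref{eqn:main} on each realisation'' to extract the coefficient $\tfrac{1+\lam}{\beta_u\lam}(|L(u)|-\gam_u\deg(u))$. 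The same obstruction breaks your bound on $\Pr(B_u)$: condition~\eqref{eq:Zlarge} controls partition functions, but in your process a high-degree colour $x$ can easily survive because each potential killer $v$ only hits $x$ with probability $1/|L(v)|$, which may be tiny.

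The paper's remedy is to run the hard-core model on $H$ itself. Then for each $x\in L(u)$ the ``layer'' $\Lambda_x=N_{H^*_J}(x)$ is (isomorphic to) a subgraph of $G[N(u)]$, and by the spatial Markov property $\bI\cap\Lambda_x$ is hard-core on a random subgraph $\bF_{\Lambda_x}$ of it. Now $\Pr(x\in L_{\bI}(u))=\EE[1/Z_{\bF_{\Lambda_x}}(\lam)]$ and $\EE|\bI\cap\Lambda_x|=\EE[\lam Z'_{\bF_{\Lambda_x}}(\lam)/Z_{\bF_{\Lambda_x}}(\lam)]$, precisely the two terms in~\eqref{eqn:main}, so local occupancy applies verbatim and summing over $x$ gives the expectation bound. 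Likewise, $x$ survives with large $\deg^*_{\sH_{\bI}}(x)$ only when $|\bU_{\Lambda_x}|>\ell^*_u$ yet $\bI\cap\Lambda_x=\varnothing$, an event of probability $1/Z_{\bF_{\Lambda_x}}(\lam)$ which~\eqref{eq:Zlarge} makes small. This is also where the list/correspondence distinction genuinely enters: $H[\Lambda_x]$ is an \emph{induced} subgraph of $G[N(u)]$ only in the list case, which is why strong local occupancy is needed otherwise. In your process the spatial Markov property on $G$ always yields induced subgraphs regardless of the cover, so the distinction cannot arise where you place it.
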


\noindent
Lemma~\ref{lem:phase1}, in turn may be derived from the following pair of bounds on the probability of certain undesirable events in a random partial $\sH$-colouring of~$G$. Naturally, these events correspond closely to the conditions of Lemma~\ref{lem:phase2}.

\begin{lemma}\label{lem:hcmH}
Fix $u\in V(G)$ and $J\in \cI( H[L(V(G)\setminus{N[u]})] )$. If $\bI'$ is a random independent set drawn from the hard-core model on $H[L_J(N(u))]$ at fugacity~$\lam$, then writing $\bI=J\cup\bI'$ the following bounds hold. 
\begin{enumerata}
\item\label{itm:LIu} $\Pr(|L_{\bI}(u)|<\ell_u)\le 1/(8\Delta^3)$.
\item\label{itm:deg}  $\Pr\left(\text{$\exists x\in L_{\bI}(u)$ with $\deg^*_{\sH_{\bI}}(x)>\ell^*_u$}\right) \le 1/(8\Delta^3)$.
\end{enumerata}
\end{lemma}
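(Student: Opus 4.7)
The plan is to adapt the closed-form hard-core computation behind Theorem~\ref{thm:main,occfrac}, lifted now to the cover graph, and then invoke the Chernoff bound for negatively correlated variables (hard-core indicator variables exhibit the required negative correlation). The crucial bridge, used for both parts, is that after revealing pieces of~$\bI'$ via the spatial Markov property on $H[L_J(N(u))]$, the expectations of interest become partition functions of a random subgraph $\bF_u$ of $G[N(u)]$ --- an induced subgraph when $\sH$ arises from a list-assignment and merely a subgraph in the general correspondence setting. Local $(\beta_u,\gam_u)$-occupancy in the sense of~\eqref{eqn:main} (respectively its strong version) then applies directly to $\bF_u$.

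For part~\ref{itm:LIu}, first observe that since $u$ has no $G$-neighbour in $V(G)\setminus N[u]$, the cover axioms give $L_J(u)=L(u)$, whence $|L_{\bI}(u)|=\sum_{x\in L(u)}\indicator{x\notin N_H[\bI']}$. Applying the spatial Markov property colour by colour expresses each summand as a partition-function ratio on an induced sub-cover, and reorganising the total while using that each clique $H[L_J(v)]$ for $v\in N(u)$ contains at most one vertex of $\bI'$ produces an inequality of the shape
\[
\beta_u\frac{\lam}{1+\lam}\EE\!\left[\frac{|L_{\bI}(u)|}{Z_{\bF_u}(\lam)}\right]+\gam_u\EE\!\left[\frac{\lam Z'_{\bF_u}(\lam)}{Z_{\bF_u}(\lam)}\right]\ge|L(u)|-\gam_u\deg(u),
\]
the subtracted term being a handshake-style correction analogous to the step preceding~\eqref{eqn:ideas,closed}. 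Substituting~\eqref{eqn:main} into the left-hand side collapses it to $\EE|L_{\bI}(u)|$, and combined with hypothesis~\eqref{eqn:m_u} this gives $\EE|L_{\bI}(u)|\ge\ell_u/(1-\eta_u)$. The Chernoff bound then delivers
\[
\Pr(|L_{\bI}(u)|<\ell_u)\le\exp\!\left(-\eta_u^2\EE|L_{\bI}(u)|/2\right)\le1/(8\Delta^3),
\]
where the final inequality uses the second argument of the maximum in~\eqref{eqn:m_u}.

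For part~\ref{itm:deg}, fix $x\in L(u)$ and note that the $H^*$-neighbours of~$x$ contributing to $\deg^*_{\sH_{\bI}}(x)$ lie in $\bigcup_{v\in N(u)}L(v)$, with at most one candidate per $v$, and such a candidate counts iff it survives and $v\notin\dom(\bI)$. A direct consequence is that $\deg^*_{\sH_{\bI}}(x)>\ell^*_u$ forces the associated random subgraph $\bF_u$ to have more than $\ell^*_u$ vertices, at which point~\eqref{eq:Zlarge} guarantees $Z_{\bF_u}(\lam)\ge 8|L(u)|\Delta^3$; a partition-function estimate then bounds the per-$x$ probability of the bad event by $1/(8|L(u)|\Delta^3)$, and a union bound over the at most $|L(u)|$ colours $x\in L(u)$ gives the claimed $1/(8\Delta^3)$. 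The main technical obstacle throughout is carefully setting up $\bF_u$ and tracking the bookkeeping that reduces cover-level quantities to $G[N(u)]$-level quantities; the arbitrary matchings in a general correspondence cover are more delicate than the identity matchings in a list cover, and this is precisely where the distinction between local and strong local occupancy enters.
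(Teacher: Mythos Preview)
Your overall strategy matches the paper's: spatial Markov per colour, local occupancy applied to subgraphs of $G[N(u)]$, a handshake-style sum, Chernoff for negatively correlated indicators, and for part~\ref{itm:deg} the partition-function bound from~\eqref{eq:Zlarge} plus a union bound. Your identification of where the induced/non-induced distinction enters is also correct.

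However, the displayed inequality in your part~\ref{itm:LIu} is garbled, and the step that follows it does not type-check. There is no single random subgraph~$\bF_u$; rather, for each $x\in L(u)$ one reveals $\bI'\setminus\Lam_x$ with $\Lam_x\coloneqq N_{H^*_J}(x)$, obtaining a per-colour random graph $\bF_{\Lam_x}$ isomorphic to a (possibly induced) subgraph of $G[N(u)]$. The spatial Markov property then gives $\Pr(x\in L_{\bI}(u)\mid\bF_{\Lam_x})=1/Z_{\bF_{\Lam_x}}(\lam)$ and $\EE\big[|\bI'\cap\Lam_x|\,\big|\,\bF_{\Lam_x}\big]=\lam Z'_{\bF_{\Lam_x}}(\lam)/Z_{\bF_{\Lam_x}}(\lam)$. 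Local occupancy~\eqref{eqn:main} is applied \emph{per colour} to $\bF_{\Lam_x}$, yielding
\[
\beta_u\frac{\lam}{1+\lam}\Pr(x\in L_{\bI}(u))+\gam_u\EE|\bI'\cap\Lam_x|\ge 1,
\]
and summing over~$x$ together with $\sum_x\EE|\bI'\cap\Lam_x|\le\deg(u)$ (disjoint layers, at most one colour chosen per neighbour) gives $\beta_u\frac{\lam}{1+\lam}\EE|L_{\bI}(u)|\ge|L(u)|-\gam_u\deg(u)$. Your version instead places $|L_{\bI}(u)|/Z_{\bF_u}(\lam)$ inside a single expectation and then claims that substituting~\eqref{eqn:main} ``collapses'' the left-hand side to $\EE|L_{\bI}(u)|$; neither step makes sense as written, since~\eqref{eqn:main} gives only a lower bound (it cannot replace the left-hand side by something specific) and the random graphs vary with~$x$. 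The same per-$x$ layer setup is what makes part~\ref{itm:deg} work: for fixed~$x$ the bad event forces $|\bU_{\Lam_x}|>\ell^*_u$ together with $\bI'\cap\Lam_x=\varnothing$, and the latter has conditional probability $1/Z_{\bF_{\Lam_x}}(\lam)\le 1/(8|L(u)|\Delta^3)$.
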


\noindent
The derivation of Lemma~\ref{lem:phase1} from Lemma~\ref{lem:hcmH} is analogous to a key derivation made by Molloy~\cite{Mol19} using the entropy compression method.  Bernshteyn~\cite{Ber19} soon after showed that this could be done instead with the lopsided Lov\'{a}sz local lemma. By now this derivation is standard, cf.~e.g.~\cite{BKNP18+},
but since it provides a clear connection, via the spatial Markov property, between the hard-core model and the local lemma, we have decided to include it for completeness (and nearly verbatim from~\cite{Ber19}).
\begin{proof}[Proof of Lemma~\ref{lem:phase1}]
Let $\bI$ be a random independent set from the hard-core model on $H$ at fugacity $\lam$.
For each $u\in V(G)$, define $B_u$ to be the event
\begin{align}
\big\{ u\notin\dom(\bI)\text{ and either $|L_{\bI}(u)|<\ell_u$ or $\exists x\in L_{\bI}(u)$ with $\deg^*_{\sH_{\bI}}(x)>\ell^*_u$} \big\}.
\end{align}
The probabilistic method ensures the desired conclusion if the probability that none of the events~$B_u$ occurs is positive.

For each $u\in V(G)$, take $\Gam(u)= N^3[u]$ (that is, the set of all vertices within distance~$3$ of~$u$).
Since $|\Gam(u)| \le \Delta^3$, it will be sufficient, by the lopsided Lov\'{a}sz local lemma, to prove that for all $S\subseteq V(G)\setminus \gam(u)$,
\begin{align}
\Pr\left(B_u \mathrel{\Bigg|} \bigcap_{j\in S} \overline{B_v}\right) \le \frac{1}{4\Delta^3}.
\end{align}
By definition, the outcome of any $B_v$ is determined by the set $\bI \cap L(N^2[v])$. If $v\in V(G)\setminus \Gam(u)$, then the distance between $u$ and $v$ is at least $4$, and so $N^2[v] \subseteq V(G)\setminus N(u)$.
Thus it suffices to show that
\begin{align}
\Pr(B_u \mid \bI \cap L(V(G)\setminus N(u)) = J ) \le \frac{1}{4\Delta^3}
\end{align}
for all $J \in \cI(H[L(V(G)\setminus N(u))])$. To that end, fix such an
independent set~$J$. We may assume that $u\notin \dom(J)$, i.e.~$J\in
\cI(H[L(V(G)\setminus N[u])])$, or else the probability we want to bound is
automatically zero.  Let $\bI'=\bI\cap L(N(u))$. By the spatial Markov
property,~$\bI'$, under the conditioning event, is a random independent set
from the hard-core model on~$H[L_J(N(u))]$ at fugacity~$\lam$. Therefore, it
follows from Lemma~\ref{lem:hcmH} that
\[
\Pr(B_u \mid \bI \cap L(V(G)\setminus N(u)) = J ) \le \frac{1}{8\Delta^3}+\frac{1}{8\Delta^3}=\frac{1}{4\Delta^3}.\qedhere
\]
\end{proof}
For the remainder of the section, we focus on Lemma~\ref{lem:hcmH}, which is all that is left to complete the proof of Theorem~\ref{thm:main,chic}.
First let us briefly compare it to previous work. In order to obtain good leading constants, we have to work with the event in~\ref{itm:deg} above, rather than the event that $u$ has more than~$\ell_u$ neighbours~$v$ in~$G_{\bI}$ with~$\ell_v\ge\ell_u$, which was used before~\cite{BKNP18+}. 
The event more closely resembles the one used in the triangle-free proofs of~\cite{Ber19,DJKP18+local,Mol19}.
A way of tackling~\ref{itm:deg} in more general settings is the main technical advance of this part of the proof. 

\begin{furtherstanding}
From here until the end of the section, we will always assume that $u$ is a fixed vertex, $J$ is a fixed independent set, and $\bI$ and $\bI'$ are random independent sets as in Lemma~\ref{lem:hcmH}.
\end{furtherstanding}

We require some additional notation.
For $x\in L(u)$, let $\Lam_x$ be the \emph{layer} of~$x$ given by $\Lam_x=N_{H^*_J}(x)$. 
This consists of the colours in $L_J(N(u))$ that conflict with~$x$, and so for distinct $x,y\in L(u)$ the layers~$\Lam_x$ and~$\Lam_y$ are necessarily disjoint.

Note the following key property of how~$\bI'$ is distributed on the sets~$\Lam_x$, which corresponds to a fact about externally uncovered neighbours in this setting. 
As in Subsection~\ref{sub:ideas}, let us write~$\bU_{\Lam_x}$ for the set of vertices obtained by revealing $\bI'\setminus \Lam_x$ and taking those vertices in $\Lam_x$ that in the graph $H^*_J$ are not adjacent to any vertex of $\bI'\setminus \Lam_x$.
Then write $\bF_{\Lam_x} = H[\bU_{\Lam_x}]$ and, for brevity, $\bI'_{\Lam_x} = \bI'\cap\Lam_x$.
By the spatial Markov property, $\bI'_{\Lam_x}$ is distributed according to the hard-core model on the graph $\bF_{\Lam_x}$ at fugacity $\lam$.

It is important to notice that $H[\Lam_x]$ is isomorphic to a subgraph of~$G[N(u)]$, as is $\bF_{\Lam_x}$. Moreover, if the cover $\sH$ is derived from a list-assignment, then $H[\Lam_x]$ and $\bF_{\Lam_x}$ are isomorphic to \emph{induced} subgraphs of~$G[N(u)]$. In either case, the assumptions of the theorem permit us to apply~\eqref{eqn:main}.

We deal with Lemma~\ref{lem:hcmH}\ref{itm:LIu} via the following result.

\begin{lemma}\label{lem:LIu}
Writing
\[
m_u = \frac{1+\lam}{\beta_u\lam}(|L(u)|-\gam_u\deg(u)),
\]
we have $\EE|L_{\bI}(u)| \ge m_u$ and 
$\Pr\big(|L_{\bI}(u)| \le (1-\eta_u)m_u\big) \le e^{-\eta_u^2m_u/2}$.
\end{lemma}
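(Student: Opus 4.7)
The plan is to identify $|L_{\bI}(u)|$ as a sum of indicators, bound its expectation using the spatial Markov property and local occupancy, and obtain concentration via the Chernoff bound for negatively correlated variables.

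For the expectation, I would first observe that $|L_{\bI}(u)|=\sum_{x\in L(u)}\bX_x$ with $\bX_x=\indicator{\bI'\cap\Lam_x=\varnothing}$: since $J\subset L(V(G)\setminus N[u])$ has no $H$-neighbour in $L(u)$ by the cover axioms, and $\bI'\subset L(N(u))$ is disjoint from $L(u)$, a colour $x\in L(u)$ survives in $L_{\bI}(u)$ if and only if $\bI'$ misses the layer $\Lam_x$. By the spatial Markov property applied to $\Lam_x$ inside $H[L_J(N(u))]$, conditional on $\bI'\setminus\Lam_x$ the random set $\bI'\cap\Lam_x$ is hard-core on $\bF_{\Lam_x}$ at fugacity $\lam$, so
\[
    \EE[\bX_x\mid\bI'\setminus\Lam_x]=\frac{1}{Z_{\bF_{\Lam_x}}(\lam)}\quad\text{and}\quad \EE[|\bI'\cap\Lam_x|\mid\bI'\setminus\Lam_x]=\frac{\lam Z'_{\bF_{\Lam_x}}(\lam)}{Z_{\bF_{\Lam_x}}(\lam)}.
\]
Since $\bF_{\Lam_x}$ is a subgraph of $G[N(u)]$ (induced, in the list-assignment case), applying (strong) local occupancy~\eqref{eqn:main} to $\bF_{\Lam_x}$ gives the pointwise inequality $\beta_u\frac{\lam}{1+\lam}\EE[\bX_x\mid\bI'\setminus\Lam_x]+\gam_u\EE[|\bI'\cap\Lam_x|\mid\bI'\setminus\Lam_x]\ge 1$. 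Taking total expectation and summing over $x\in L(u)$, using that the layers are pairwise disjoint (so $\sum_x|\bI'\cap\Lam_x|\le|\bI'|$) and that $|\bI'|\le\deg(u)$ (as $H[L(v)]$ is complete for each $v\in N(u)$), a rearrangement yields $\EE|L_{\bI}(u)|\ge m_u$.

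For the concentration, set $\bY_x=1-\bX_x=\indicator{\bI'\cap\Lam_x\neq\varnothing}$ and apply the Chernoff bound for negatively correlated variables to the family $(\bY_x)_{x\in L(u)}$. This delivers $\Pr(|L_{\bI}(u)|\le(1-\eta_u)\EE|L_{\bI}(u)|)\le \exp(-\eta_u^2\EE|L_{\bI}(u)|/2)$; since $\EE|L_{\bI}(u)|\ge m_u$ and the map $t\mapsto(t-(1-\eta_u)m_u)^2/t$ is increasing on $[m_u,\infty)$ with value $\eta_u^2 m_u$ at $t=m_u$, one obtains the claimed bound with $m_u$ in place of $\EE|L_{\bI}(u)|$.

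I expect the technical crux to be verifying that the $\bY_x$ are indeed negatively correlated under the hard-core distribution on $H[L_J(N(u))]$. Hard-core on general graphs does not come with negatively associated occupancy indicators, so the argument must genuinely use the specific cover structure: the layers $\Lam_x$ are pairwise disjoint and each meets each clique $L_J(v)$ in at most one vertex (the matching-edge axiom), so each neighbour $v$ of $u$ can block at most one colour of $L(u)$. In an independent sampling model this would yield the negative correlation directly from the Weierstrass inequality $1-\sum p_i\le\prod(1-p_i)$; the non-trivial step is to show that the matching-conflict conditioning intrinsic to the hard-core distribution preserves this negative correlation, which I would attempt by inductively peeling layers via repeated use of the spatial Markov property.
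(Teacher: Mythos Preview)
Your proposal matches the paper's argument step for step: spatial Markov plus local occupancy applied to each layer $\Lam_x$ and summed over $x\in L(u)$ gives $\EE|L_{\bI}(u)|\ge m_u$, and the Chernoff bound for negatively correlated $(\bY_x)_{x\in L(u)}$ gives the tail bound (your passage from $\EE|L_{\bI}(u)|$ to $m_u$ in the exponent is fine, though the simpler route is just $(1-\eta_u)m_u\le(1-\eta_u)\EE|L_{\bI}(u)|$ together with $e^{-\eta_u^2\EE|L_{\bI}(u)|/2}\le e^{-\eta_u^2 m_u/2}$). The one notable difference is in emphasis: where you flag the negative correlation of the $\bY_x$ as the technical crux and propose an inductive layer-peeling argument, the paper disposes of it in a single line---reducing to $\Pr(\bI'\cap\Lam_y\neq\varnothing\text{ for all }y\in Y\mid\bI'\cap\Lam_x=\varnothing)\ge\Pr(\bI'\cap\Lam_y\neq\varnothing\text{ for all }y\in Y)$ and asserting this ``holds because the sets $N_{H^*}(x)$ and $N_{H^*}(Y)$ are disjoint,'' explicitly repeating Bernshteyn's triangle-free argument verbatim---so your caution that disjointness alone may not suffice beyond the setting where the cliques $L_J(v)$ are mutually independent is not reflected in the paper's own treatment.
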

\begin{proof}
Note that $\Pr(x\in L_{\bI}(u))=\Pr(|\bI'_{\Lam_x}|=0)$ because $x\in L_{\bI}(u)$ if and only if $|\bI'_{\Lam_x}|=0$, and hence from the key property and~\eqref{eqn:main} we have
\[
\beta_u \frac{\lam}{1+\lam}\Pr(x\in L_{\bI}(u))+ \gam_u\EE|\bI'_{\Lam_x}| \ge 1,
\]
which we sum over all $x\in L(u)$ to obtain
\[
|L(u)|
\le \beta_u \frac{\lam}{1+\lam}\EE|L_{\bI}(u)| + \gam_u\sum_{x\in L(u)}\EE|\bI'_{\Lam_x}| 
\le \beta_u \frac{\lam}{1+\lam}\EE|L_{\bI}(u)| + \gam_u\deg(u).
\]
The last inequality holds because $\EE\sum_{x\in L(u)}|\bI'_{\Lam_x}|$ is the expected number of neighbours of~$u$ which are $\sH$-coloured by~$\bI'$,
which is clearly at most~$\deg(u)$. 
Rearranging immediately yields
the first conclusion of Lemma~\ref{lem:LIu}.

For the second, note that $\EE|L_{\bI}(u)|$ is a sum of indicator variables~$\bX_x$ for
the events~$\{ |\bI'_{\Lam_x}| = 0 \}$ with~$x\in L(u)$.  The result then follows
    directly from the Chernoff bound stated in Subsection~\ref{sub:prob}, if we
    show that the random variables~$\bY_x\coloneqq 1-\bX_x$ are negatively correlated. 

The required negative correlation was shown formally by Bernshteyn~\cite{Ber19}
(in the triangle-free case), and is somewhat intuitive here. Consider the random
    partial $\sH$-colouring represented by~$\bI'$.  Given~$x\in L(u)$, if
    $|\bI'_{\Lam_x}| = 0$ then no colours conflicting with~$x$ are chosen for
    vertices in~$N(u)$.  This makes other colours more likely to be chosen,
    such as those which conflict with~$x'\in L(u)\setminus\{x\}$.
We repeat Bernshteyn's argument for completeness. 

It is enough to show that for all~$x\in L(u)$ and~$Y\subset L(u)\setminus\{x\}$ we have 
\[
\Pr(x\notin L_{\bI}(u) \mid Y \cap L_{\bI}(u) = \varnothing ) \le \Pr(x\in L_{\bI}(u)),
\]
which is equivalent to
\[
    \Pr\big(\text{$\bI'\cap N_{H^*}(y) \neq \varnothing$ for all $y\in Y \mid \bI'\cap N_{H^*}(x)=\varnothing$}\big) \ge \Pr(Y\cap L_{\bI}(u)=\varnothing),
\]
which holds because the sets~$N_{H^*}(x)$ and~$N_{H^*}(Y)$ are disjoint.
\end{proof}

To deal with Lemma~\ref{lem:hcmH}\ref{itm:deg} we use the following result.

\begin{lemma}\label{lem:deg}
For any~$x\in L(u)$, writing
\[
\badeve_x = \big\{x\in L_{\bI}(u) \text{ and } \deg^*_{\sH_{\bI}}(x) > \ell^*_u\big\},
\]
we have $\Pr(\badeve_x) \le 1/(8|L(u)|\Delta^3)$. 
\end{lemma}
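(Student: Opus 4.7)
The plan is to use the spatial Markov property to reduce the event $\badeve_x$ to a bound on the random partition function $Z_{\bF_{\Lam_x}}(\lam)$, then close via hypothesis~\eqref{eq:Zlarge}.

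First I would identify $\deg^*_{\sH_{\bI}}(x)$ with a function of $\bF_{\Lam_x}$. Since $x \in L(u)$ and $H^*$ contains no intra-list edges, every $H^*$-neighbour of $x$ lies in $\Lam_x$. On the event $x \in L_{\bI}(u)$ we have $\bI \cap \Lam_x = \varnothing$, so $\bI' \setminus \Lam_x = \bI'$; consequently an $H^*$-neighbour $y \in \Lam_x$ of $x$ still exists in $H_{\bI}$ only if $y$ is externally uncovered, which is to say $y \in \bU_{\Lam_x} = V(\bF_{\Lam_x})$. Intra-list $H$-edges may cause further attrition, but this can only decrease the remaining $H^*$-degree, so conditionally on $x \in L_{\bI}(u)$,
\[
\deg^*_{\sH_{\bI}}(x) \le |V(\bF_{\Lam_x})|,
\]
and therefore $\badeve_x \subseteq \{\bI'_{\Lam_x} = \varnothing\} \cap \{|V(\bF_{\Lam_x})| > \ell^*_u\}$.

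Next, I would condition on $\bF_{\Lam_x}$: the spatial Markov property, applied exactly as in the proof of Lemma~\ref{lem:LIu}, yields that conditional on $\bF_{\Lam_x} = F$ the restriction $\bI'_{\Lam_x}$ is distributed as the hard-core model on $F$ at fugacity $\lam$. Thus $\Pr(\bI'_{\Lam_x} = \varnothing \mid \bF_{\Lam_x} = F) = 1/Z_F(\lam)$, and
\[
\Pr(\badeve_x) \le \EE\!\left[\frac{\mathbf{1}_{\{|V(\bF_{\Lam_x})| > \ell^*_u\}}}{Z_{\bF_{\Lam_x}}(\lam)}\right].
\]
Now $\bF_{\Lam_x}$ is isomorphic to a subgraph of $G[N(u)]$---induced when $\sH$ arises from a list-assignment, and arbitrary in the correspondence setting (in which case $Z_{\bF_{\Lam_x}}(\lam)$ is at least the partition function of the underlying induced subgraph, since deleting edges can only add independent sets, so~\eqref{eq:Zlarge} still applies). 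On the event $|V(\bF_{\Lam_x})| > \ell^*_u$, hypothesis~\eqref{eq:Zlarge} delivers $Z_{\bF_{\Lam_x}}(\lam) \ge 8|L(u)|\Delta^3$, which closes the bound.

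The main obstacle I anticipate is the bookkeeping behind $\deg^*_{\sH_{\bI}}(x) \le |V(\bF_{\Lam_x})|$: one has to carefully distinguish intra-list $H$-edges (which are suppressed in $H^*$ but still active in $H_{\bI}$) from $H^*$-edges, and verify that using the externally uncovered set $\bU_{\Lam_x}$ as an upper bound on the surviving $H^*$-neighbours costs nothing in the direction we need. Everything thereafter is a direct application of the spatial Markov property and~\eqref{eq:Zlarge}, structurally mirroring the proof of Lemma~\ref{lem:LIu}.
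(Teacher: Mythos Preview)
Your proposal is correct and follows essentially the same route as the paper: identify $\badeve_x$ with the event that $\bI'_{\Lam_x}=\varnothing$ while $|\bU_{\Lam_x}|>\ell^*_u$, then use the spatial Markov property to get $\Pr(\bI'_{\Lam_x}=\varnothing\mid\bF_{\Lam_x})=1/Z_{\bF_{\Lam_x}}(\lam)$ and close with~\eqref{eq:Zlarge}, including the same observation about edge deletion for the non-induced case. The only cosmetic difference is that the paper asserts the equality $\deg^*_{\sH_{\bI}}(x)=|\bU_{\Lam_x}|$ whereas you prove and use only the inequality $\deg^*_{\sH_{\bI}}(x)\le|\bU_{\Lam_x}|$, which is all that is needed.
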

\begin{proof}
When $x\in L_{\bI}(u)$ we must have $|\bI'_{\Lam_x}|=0$ and hence $\deg^*_{\sH_{\bI}}(x) =|\bU_{\Lam_x}|$, as some $x'\in N_{H^*_J}(x)$ remains a member of $N_{H^*_\bI}(x)$ only if $x'$ is in $\bU_{\Lam_x}$ (or else $x'$ is adjacent to some member of $\bI'\setminus\Lam_x$). In the case $|\bI'_{\Lam_x}|=0$, every vertex of $\bU_{\Lam_x}$ contributes to $\deg^*_{\sH_{\bI}}(x)$.
Then $\badeve_x$ occurs if and only if both $|\bU_{\Lam_x}|>\ell^*_u$ and $|\bI'_{\Lam_x}|=0$. 
So it suffices to show whenever $|\bU_{\Lam_x}|>\ell^*_u$ that
\[
\Pr(x\in L_{\bI}(u)) \le \frac{1}{8|L(u)|\Delta^3}.
\]
By the key property we have $\Pr(x\in L_{\bI}(u)) = 1/Z_{\bF_{\Lam_x}}(\lam)$, and then the desired bound follows from~\eqref{eq:Zlarge}.
Note that removing edges from~$F$ only increases~$Z_F(\lam)$ so the `induced' condition suffices.
\end{proof}

\begin{proof}[Proof of Lemma~\ref{lem:hcmH}]
For part~\ref{itm:LIu}, Lemma~\ref{lem:LIu} gives
\[
\Pr(|L_{\bI}(u)| < \ell_u) \le \Pr\big(|L_{\bI}(u)| \le (1-\eta_u)m_u\big) \le e^{-\eta_u^2m_u/2} \le \frac{1}{8\Delta^3},
\]
where in the first and last inequalities we used the condition on $m_u$ in~\eqref{eqn:m_u}.

For part~\ref{itm:deg}, Lemma~\ref{lem:deg} and a union bound gives
\begin{align}
\Pr\big(\text{there is $x\in L_{\bI}(u)$ with $\deg^*_{\sH_{\bI}}(x) > \ell^*_u$}\big) 
  &\le \sum_{x\in L(u)}\Pr(\badeve_x) 
\le \frac{1}{8\Delta^3}.\qedhere
\end{align}
\end{proof}

In Theorem~\ref{thm:main,chic}, we have attempted to keep the statement as general as possible in case this might be useful for some future applications of our framework. The reader will soon notice that all applications given in the present work (via Theorem~\ref{thm:main,chic,convenient}) take a uniform choice of~${(\ell_u)}_u$. Using a result of Haxell~\cite{Hax01} instead of Lemma~\ref{lem:phase2}, one may, if so desired, increase to~$1/2$ the~$1/8$~factor for the size of~$F$ used to verify~\eqref{eq:main,chic,simple,Zlarge} or~\eqref{eq:Zlarge}, or even arbitrarily close to~$1$ via the result of Reed and Sudakov~\cite{ReSu02} when restricted to list colouring.


\section{Further prerequisites}\label{sec:furtherprelim}

In Sections~\ref{sec:mad}--\ref{sec:clique}, we perform several local hard-core analyses that are needed to justify the consequences of our framework, those stated in Subsection~\ref{sub:intro,chi}. This section provides a few more preliminaries for such analyses. Some of these analyses give rise to terms best expressed in terms of the Lambert $W$-function, several properties of which we describe in Subsection~\ref{sub:Lambert}. We repeatedly (locally) apply two general bounds on the expected size of a random independent set, which are given in Subsections~\ref{sub:entropy,hcm} and~\ref{sub:sparse,hcm}.

\subsection{The Lambert \texorpdfstring{$W$}{W}-function}\label{sub:Lambert}

We will be interested in the solutions to equations such as $y = x e^x$ and $y=e^x/\log x$ which cannot be expressed with elementary functions, and we collect the necessary material here.

The equation $y= x e^x$ is well studied and the solution gives rise to the \emph{Lambert $W$-function}. 
This has two real branches, and we write $W$ for the `upper' or principal branch and $W_{-1}$ for the `lower' or negative real branch, see Figure~\ref{fig:Lambert}. 
That is, both $W$ and $W_{-1}$ are the inverse of $x\mapsto x e^x$ but $W:\interval[co]{-1/e}{\infty}\to\interval[co]{-1}{\infty}$, and $W_{-1}:\interval[co]{-1/e}{0}\to\interval[oc]{-\infty}{-1}$, see~\cite{CGHJK96} for more details, and proofs of the properties we discuss below.

\begin{figure}[ht]
\centering
\includegraphics{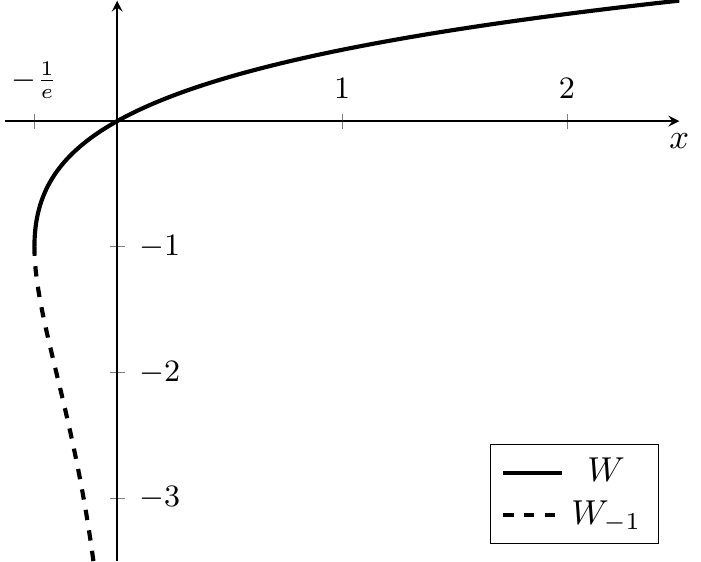}
\caption{The functions $W(x)$ (solid line) and $W_{-1}(x)$ (dashed line)}\label{fig:Lambert}
\end{figure}

For the upper branch, we will use that as $x\to\infty$ we have 
\[
W(x)=\log x-\log\log x+o(1),
\]
and note that $W(x)e^{W(x)}=x$ implies $e^{W(x)} = x/W(x)$ if~$x\neq0$. This latter property clearly holds for any branch of~$W$.

We encounter the lower branch as a solution to the equation $x\log(1/x)=1/(ey)$. 
A little rearranging gives $(\log x) e^{\log x} = -1/(ey)$, and hence $\log x$ is given by some branch of the Lambert $W$-function applied to $-1/(ey)$.
If we know that $x\in\interval[oo]{0}{1/e}$ and $y\ge 1$ then the solution is
\[
x = e^{W_{-1}(-1/(ey))} = \frac{1}{ey}\cdot \frac{1}{-W_{-1}(-1/(ey))}.
\]
For brevity we introduce $\fn \colon \interval[co]{1}{\infty}\to\interval[co]{1}{\infty}$ for the function $\fn(y) = -W_{-1}(-1/(ey))$ appearing above. 
Then for $x\in\interval[oo]{0}{1/e}$ and $y\ge 1$ we have
\[
x\log\frac{1}{x} = \frac{1}{ey} \quad\Longrightarrow\quad x = \frac{1}{ey \fn(y)}.
\]
Standard properties of $W_{-1}$ give that $\fn$ is continuous, increasing, and as $x\to\infty$ satisfies $\fn(x)\sim\log x$, see~\cite{CGHJK96}. 

For yet another equation we only need an asymptotic solution, but up to an additive~$o(1)$ error term. 
Suppose that~$x$ and~$y$ are positive reals such that
\[
y = \frac{e^x}{\log x}.
\]
Then as $y\to\infty$ we have
\begin{align}
x &= \log y + \log\log\log y + o(1).
\end{align}
To see this, substitute $x = \log y + \log\log\log y + z$ to obtain
\[
e^z \log\log y = \log(\log y +\log\log\log y + z).
\]
If $z=0$ the left-hand side is too small, but for constant~$z$ and large enough~$y$ the left-hand side is too large, so~$z>0$ and~$z=o(1)$. 

Similarly, if~$x$ and~$y$ are positive reals such that
\[
    y = \frac{xe^x}{\log x},
\]
then as~$y\to\infty$ we have
\[
    x = \log y - \log\log y + \log\log\log y + o(1).
\]

\subsection{The hard-core model and entropy}\label{sub:entropy,hcm}

In this subsection, we develop a refinement of an entropy argument of Shearer~\cite{She95}.
See e.g.~the book by Alon and Spencer~\cite{AS16book} for an introduction to the necessary entropy material.
Some minor changes to Shearer's proof are necessary to handle a general positive
fugacity~$\lam$, but the case~$\lam=1$ of the proof below simply corresponds
to a rather precise description of Shearer's original method.  The result is
most useful when one has control of the term $\fn\big(y\lam/\log
Z_F(\lam)\big)$. It boils down to bounding the number of
independent sets in~$F$ in terms of the size of~$F$, and in later relevant
sections we establish such results in context. 

\begin{lemma}\label{lem:shearerhcm}
For any graph~$F$ on~$y$ vertices and any positive real~$\lam$,
\[
\frac{\lam Z_F'(\lam)}{Z_F(\lam)} \ge \frac{\log Z_F(\lam)}{\fn\big(y\lam/\log Z_F(\lam)\big)}.
\]
\end{lemma}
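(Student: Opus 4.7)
The plan is to run an entropy argument in the spirit of Shearer (1995), adapted to arbitrary fugacity~$\lam$. Write $\bI$ for the hard-core random independent set on~$F$ at fugacity~$\lam$ and set $M\coloneqq\EE|\bI|=\lam Z_F'(\lam)/Z_F(\lam)$, $z\coloneqq\log Z_F(\lam)$, and $p_v\coloneqq\Pr(v\in\bI)$. From $\Pr(\bI=I)=\lam^{|I|}/Z_F(\lam)$, the definition of Shannon entropy yields $H(\bI)=z-(\log\lam)M$. On the other hand, subadditivity of entropy applied to the indicators $\bI_v\coloneqq\indicator{v\in\bI}$, followed by concavity of the binary entropy~$h$, gives $H(\bI)\le\sum_v h(p_v)\le y\,h(M/y)$; and the elementary bound $h(p)\le p\log(e/p)$ (which follows from $-(1-p)\log(1-p)\le p$ on $[0,1]$) further yields $y\,h(M/y)\le M\log(ey/M)$. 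Combining and rearranging produces the master inequality
\[z \le M\log\!\left(\frac{ey\lam}{M}\right).\]

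To convert this into the target bound involving $\fn$, I set $q\coloneqq M/(ey\lam)$ and $y'\coloneqq y\lam/z$. Because $z\le y\log(1+\lam)\le y\lam$, one has $y'\ge 1$ and so $\fn(y')$ is defined; because the standard bound $p_v\le\lam/(1+\lam)$ for every~$v$ yields $M\le y\lam/(1+\lam)$, one also has $q\le 1/(e(1+\lam))<1/e$. Writing $x^*\coloneqq 1/(ey'\fn(y'))$, the defining identity of~$\fn$ recalled in Subsection~\ref{sub:Lambert} gives $x^*\in(0,1/e)$ and
\[x^*\log(1/x^*)=\frac{1}{ey'}=\frac{z}{ey\lam},\]
while the master inequality rearranges to $q\log(1/q)\ge z/(ey\lam)$. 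Hence $q\log(1/q)\ge x^*\log(1/x^*)$.

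Since the map $t\mapsto t\log(1/t)$ is strictly increasing on $(0,1/e)$ and both $q,x^*\in(0,1/e)$, the previous inequality forces $q\ge x^*$. Substituting back then yields $M = ey\lam\, q \ge ey\lam\, x^* = z/\fn(y\lam/z)$, which is exactly the required bound. I expect no genuine obstacle: the argument is a clean entropy inequality combined with the function-defining identity for $\fn$, and the main bookkeeping point is to verify that $q\in(0,1/e)$ so that monotonicity of $t\log(1/t)$ can be applied directly.
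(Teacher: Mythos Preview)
Your proof is correct and follows essentially the same entropy argument as the paper: compute $H(\bI)=z-M\log\lam$, bound it via subadditivity and $h(p)\le p\log(e/p)$ to get the master inequality $z\le M\log(ey\lam/M)$, then invert using the defining property of~$\fn$ and monotonicity of $t\mapsto t\log(1/t)$ on $(0,1/e)$. The only cosmetic difference is that you work in natural logarithms throughout, whereas the paper carries base-$2$ entropy and converts at the end; your bookkeeping of the ranges $q\in(0,1/e)$ and $y'\ge 1$ matches the paper's checks that $\phi<\lam$ and $\log Z/(ey\lam)\in[0,1/e]$.
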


\begin{proof}
In the proof we write~$Z$ instead of~$Z_F(\lam)$ for brevity and write~$\log_2$ for the base-2 logarithm. For~$x\in(0,1)$, let 
\[
h(x) = -x\log_2x-(1-x)\log_2(1-x)
\]
be the binary entropy function (also satisfying $h(0)=h(1)=0$).
For the hard-core model on~$F$ at fugacity~$\lam$, we write~$\bI$ for a random independent set and $\phi = \EE|\bI|/y$ for the occupancy fraction. 
We desire a lower bound on~$\EE|\bI|$.
We first prove that 
\begin{align}\label{eq:ZFub}
\log Z \le y\phi\log(e\lam/\phi).
\end{align}
To this end, we compute the entropy~$H(\bI)$,
\begin{align*}
H(\bI) 
  &= -\sum_{I\in\cI(F)}\frac{\lam^{|I|}}{Z}\log_2\frac{\lam^{|I|}}{Z} = \log_2 Z-\sum_{I\in\cI(F)} |I|\frac{\lam^{|I|}}{Z}\log_2\lam 
\\&= \log_2 Z -\EE|\bI|\log_2\lam,
\end{align*}
so that by the subadditivity of the entropy and the concavity of~$h$ we have
\begin{align*}
\log_2 Z
  &= \EE|\bI|\log_2\lam + H(\bI) 
\\&\le \EE|\bI|\log_2\lam + \sum_{u\in V(F)}H(\indicator{u\in\bI}) 
\\&= y\phi\log_2\lam + \sum_{u\in V(F)}h\big(\Pr(u\in\bI)\big) \le y\big(\phi\log_2\lam + h(\phi)\big).
\end{align*}
Inequality~\eqref{eq:ZFub} follows since for all $x\in\interval[oc]{0}{1}$ we have $h(x)\le x\log_2(e/x)$, and $h(0)=0=\lim_{x\to0}x\log_2(e/x)$.
Note that we only use logarithms to base~$2$ in the above calculations, and work with the natural logarithm elsewhere, including in~\eqref{eq:ZFub}.

The right-hand side of~\eqref{eq:ZFub} is an increasing function of $\phi$ because the partial derivative with respect to $\phi$ is $y\log(\lam/\phi)$ and in the hard-core model
\[
\phi=\frac{1}{y}\sum_{u\in V(F)}\Pr(u\in\bI) \le \frac{1}{y}\sum_{u\in V(F)}\Pr(u\in\bI\mid N(u)\cap\bI=\varnothing) = \frac{\lam}{1+\lam} < \lam.
\]
Hence we bound $\phi$ from below by solving~\eqref{eq:ZFub} with equality, leading to
\[
\frac{\phi}{e\lam}\log\frac{e\lam}{\phi} = \frac{\log Z}{ey\lam}.
\]
    The right-hand side lies in $[0,1/e]$ since $Z\le {(1+\lam)}^y$.
The exact solution is naturally expressed in terms of the function $\fn$ (see Subsection~\ref{sub:Lambert}), giving
\[
\EE|\bI| = y\phi \ge \frac{\log Z_F(\lam)}{\fn\big(y\lam/\log Z_F(\lam)\big)}.\qedhere
\]
\end{proof}

\subsection{The hard-core model with bounded average degree}\label{sub:sparse,hcm}

The following lemma has essentially already appeared~\cite{DJKP18+occupancy}.

\begin{lemma}\label{lem:sparse,hcm}
For any graph~$F$ on~$y$ vertices with average degree~$d$ and any positive real $\lam$,
\begin{align}
\frac{\lam Z_F'(\lam)}{Z_F(\lam)} 
    & \ge \frac{\lam}{1+\lam}y{(1+\lam)}^{-d},\quad \text{and} \\
\log Z_F(\lam)
& \ge
\begin{cases}
 y \log(1+\lam) & \text{ if $d=0$,}\\
    \frac{y}{d}\left(1-{(1+\lam)}^{-d}\right) & \text{ if $d>0$.}
\end{cases}
\end{align}
\end{lemma}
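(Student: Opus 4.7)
The plan is to establish the first inequality by a vertex-by-vertex lower bound on $\Pr(v\in\bI)$ where $\bI$ is drawn from the hard-core model on $F$ at fugacity $\lam$, then deduce the second inequality by integrating in the fugacity.

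For the first inequality, partition $\cI(F)$ according to whether a given vertex $v$ is contained in the independent set to obtain the standard identity $\Pr(v\in\bI)=\lam Z_{F-N[v]}(\lam)/Z_F(\lam)$. I would then bound the ratio $Z_F(\lam)/Z_{F-N[v]}(\lam)\le(1+\lam)^{1+\deg(v)}$ by a simple induction on vertex attachment: for any graph $G$ and any vertex $w$ added with arbitrary edges to form $G'$, one has
\[
Z_{G'}(\lam) = Z_G(\lam) + \lam Z_{G-N(w)}(\lam) \le (1+\lam)Z_G(\lam),
\]
and applying this once for each of the $1+\deg(v)$ vertices in $N[v]$ yields the claim. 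Therefore $\Pr(v\in\bI)\ge\tfrac{\lam}{1+\lam}(1+\lam)^{-\deg(v)}$. Summing over $v\in V(F)$ and invoking Jensen's inequality for the convex function $x\mapsto(1+\lam)^{-x}$ (convex since $\log(1+\lam)>0$) gives
\[
\frac{\lam Z_F'(\lam)}{Z_F(\lam)}=\EE|\bI|\ge\frac{\lam}{1+\lam}\sum_{v\in V(F)}(1+\lam)^{-\deg(v)}\ge\frac{\lam}{1+\lam}\,y\,(1+\lam)^{-d},
\]
which is the desired bound.

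For the second inequality, I would exploit $Z_F(0)=1$ to write $\log Z_F(\lam) = \int_0^\lam Z_F'(t)/Z_F(t)\,dt$ and apply the first inequality pointwise at fugacity $t$, yielding $Z_F'(t)/Z_F(t)\ge y(1+t)^{-d-1}$. Integrating from $0$ to $\lam$ produces $y\log(1+\lam)$ when $d=0$ and $\tfrac{y}{d}(1-(1+\lam)^{-d})$ when $d>0$, matching the stated bounds.

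I do not anticipate any real obstacle. The partition-function ratio bound is insensitive to which edges are added back, the Jensen step uses only the simplest possible convex function, and the final integration is two elementary antiderivatives. The only mild point to keep in mind is to handle the $d=0$ case (an edgeless graph, for which $Z_F(\lam)=(1+\lam)^y$) separately, since the antiderivative of $(1+t)^{-d-1}$ changes form there.
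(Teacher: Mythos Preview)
Your proposal is correct and follows essentially the same route as the paper: establish the per-vertex bound $\Pr(v\in\bI)\ge\frac{\lam}{1+\lam}(1+\lam)^{-\deg(v)}$, sum over vertices and apply convexity to get the first inequality, then integrate in the fugacity for the second. The only cosmetic difference is that the paper phrases the per-vertex bound via the spatial Markov property (conditioning on $\bI\setminus N[v]$ and bounding $Z_{\bF_{N(v)}}(\lam)\le(1+\lam)^{\deg(v)}$), whereas you derive it directly from the ratio identity $\Pr(v\in\bI)=\lam Z_{F-N[v]}(\lam)/Z_F(\lam)$ together with the one-vertex-at-a-time bound $Z_{G'}(\lam)\le(1+\lam)Z_G(\lam)$; these are two packagings of the same inequality.
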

\begin{proof}
We apply the analysis from Subsection~\ref{sub:ideas} to~$F$. 
Let~$\bI$ be a random independent set from the hard-core model at fugacity~$\lam$ on~$F$. 
First, we have for any~$u\in V(F)$,
\[
    \Pr(u\in\bI) = \frac{\lam}{1+\lam}\Pr(\bI\cap N(u) = \varnothing)\ge \frac{\lam}{1+\lam}{(1+\lam)}^{-\deg(u)},
\]
because the spatial Markov property gives that $\bI\cap N(v)$ is a random independent set drawn from the
    hard-core model on the subgraph~$\bF_{N(v)}$ of~$F[N(v)]$ induced by the externally uncovered
    neighbours of~$u$.  The final inequality comes from the fact that any realisation of $\bF_{N(v)}$ has
    $Z_{\bF_{N(v)}}(\lam)\le {(1+\lam)}^{\deg(u)}$.  The lemma follows by convexity:
\begin{align}
\EE|\bI| = \sum_{u\in V(F)}\Pr(u\in\bI) 
    &\ge \frac{\lam}{1+\lam}\sum_{u\in V(F)}{(1+\lam)}^{-\deg(u)} \\
    &\ge \frac{\lam}{1+\lam}y{(1+\lam)}^{-d},
\end{align}
and integrating this bound gives the required lower bound on~$\log Z_F(\lam)$.
\end{proof}

Note that the above lower bound on~$\log Z_F(\lam)$ smoothly weakens as~$d$ increases from zero, in that
the expression for~$d=0$ is simply the limit when~$d$ tends to~$0$ of the expression for positive~$d$ (in
fact with equality instead of inequality).


\section{Bounded local maximum average degree}\label{sec:mad}

In this section we prove generalisations of Theorems~\ref{thm:Cfree,chi}--\ref{thm:boundedC,chi}. 
The key idea is to weaken the condition of triangle-freeness to having bounded average degree in any subgraph of a neighbourhood. 

\subsection{Local occupancy with bounded local mad}\label{sub:mad,localocc}
We begin with the relevant local occupancy result.

\begin{lemma}\label{lem:mad}
Let~${(a_u)}_u$ be a sequence of nonnegative real numbers indexed by the vertices of a graph~$G$ such
    that $\mad(G[N(u)]) \le a_u$ for each~$u\in V(G)$. Then the following statements hold for
    any~$\lam>0$.
\begin{enumerate}
\item\label{itm:mad,hcm}
    For any collection ${(d_u)}_{u\in V(G)}$ of positive reals, 
        a choice of parameters that minimises $\beta_u + \gam_u d_u$ for all~$u\in V(G)$ subject to strong local ${(\beta_u,\gam_u)}_u$-occupancy in the hard-core model on~$G$ at fugacity~$\lam$ is
\begin{align*}
    \gam_u &\coloneqq \frac{1+\lam}{\lam}\frac{{(1+\lam)}^{a_u}\log(1+\lam)}{1+W(d_u{(1+\lam)}^{a_u}\log(1+\lam))}\\
    \beta_u &\coloneqq \frac{\gam_u{(1+\lam)}^{\frac{{(1+\lam)}^{1+a_u}}{\gam_u\lam}-a_u}}{e\log(1+\lam)},
\end{align*}
and under this choice, for all $u\in V(G)$,
\begin{align*}
    \beta_u+\gam_u d_u = \frac{1+\lam}{\lam} \frac{d_u{(1+\lam)}^{a_u}\log(1+\lam)}{W(d_u{(1+\lam)}^{a_u}\log(1+\lam))}.
\end{align*}
\item\label{itm:mad,largeZ}
For any $u\in V(G)$ and any subgraph $F$ of $G[N(u)]$ on $y$ vertices, 
\begin{align*}
\log Z_F(\lam) \ge 
\begin{cases}
y\log(1+\lam) & \text{ if $a_u=0$,}\\
    \frac{y}{a_u}(1-{(1+\lam)}^{-a_u}) & \text{ if $a_u>0$.}
\end{cases}
\end{align*}
\end{enumerate}
\end{lemma}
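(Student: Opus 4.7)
The plan is to handle part~\ref{itm:mad,largeZ} first via a direct application of Lemma~\ref{lem:sparse,hcm}, then derive part~\ref{itm:mad,hcm} as a constrained optimization arising from a clean reduction of the strong local occupancy condition to a one-parameter family of inequalities.

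For part~\ref{itm:mad,largeZ}, I would observe that $\mad$ is monotone under subgraph inclusion, so any subgraph $F \subseteq G[N(u)]$ on $y$ vertices has average degree $d_F \le \mad(F) \le \mad(G[N(u)]) \le a_u$. Lemma~\ref{lem:sparse,hcm} gives $\log Z_F(\lam) \ge (y/d_F)(1 - (1+\lam)^{-d_F})$ when $d_F > 0$, and $\log Z_F(\lam) = y\log(1+\lam)$ when $d_F = 0$. A short calculus check---writing $t = \log(1+\lam)$, the sign of the $d$-derivative of $(1-e^{-td})/d$ matches that of $(1+td)e^{-td} - 1$, which vanishes at $d=0$ and is nonincreasing in $d$---shows the bound is decreasing in $d_F$, so its minimum over $d_F \in [0,a_u]$ is attained at $d_F = a_u$, yielding the stated form.

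For part~\ref{itm:mad,hcm}, I would first reduce strong local occupancy to a one-parameter family of constraints. For any subgraph $F \subseteq G[N(u)]$ on $y$ vertices, combining the trivial bound $Z_F(\lam) \le (1+\lam)^y$ with Lemma~\ref{lem:sparse,hcm}'s estimate $\lam Z_F'(\lam)/Z_F(\lam) \ge \lam y(1+\lam)^{-d_F}/(1+\lam)$ together with $d_F \le a_u$ gives
\[
\beta_u \frac{\lam}{1+\lam} \frac{1}{Z_F(\lam)} + \gam_u \frac{\lam Z_F'(\lam)}{Z_F(\lam)} \ge \beta_u \frac{\lam}{1+\lam}(1+\lam)^{-y} + \gam_u \frac{\lam y (1+\lam)^{-a_u}}{1+\lam}.
\]
Strong local ${(\beta_u,\gam_u)}_u$-occupancy is therefore implied by asking that the right-hand side be at least $1$ for every $y \ge 0$. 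Since the right-hand side is convex in $y$ (a decreasing exponential plus an increasing linear), elementary calculus locates its unique critical point $y^*$ as the minimizer, collapsing the whole family to a single equality at $y^*$.

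Finally I would minimize $\beta_u + \gam_u d_u$ subject to this equality. Expressing $\beta_u$ in terms of $\gam_u$ via the constraint and differentiating the objective with respect to $\gam_u$ produces an equation of the form $s e^s = d_u (1+\lam)^{a_u}\log(1+\lam)$ with $s = (1+\lam)^{1+a_u}\log(1+\lam)/(\gam_u\lam) - 1$, whose unique positive solution is $s = W(d_u(1+\lam)^{a_u}\log(1+\lam))$. Back-substitution yields the stated $\gam_u$ and $\beta_u$, and the compact form of $\beta_u + \gam_u d_u$ then follows from the identity $e^s = d_u(1+\lam)^{a_u}\log(1+\lam)/s$. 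The main obstacle is managing the double-exponential algebra cleanly and verifying that the reduction is tight enough for the stated choice to be truly minimal; the underlying one-dimensional optimization itself is routine.
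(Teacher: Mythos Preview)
Your proposal is correct and follows essentially the same route as the paper: both reduce strong local occupancy via $Z_F(\lam)\le(1+\lam)^y$ and the occupancy bound from Lemma~\ref{lem:sparse,hcm} to the convex function $g(y)=\frac{\lam}{1+\lam}\big(\beta_u(1+\lam)^{-y}+\gam_u y(1+\lam)^{-a_u}\big)$, set $g(y^*)=1$ at the minimiser to express $\beta_u$ in terms of $\gam_u$, and then optimise $\beta_u+\gam_u d_u$ over $\gam_u$. Your treatment of part~\ref{itm:mad,largeZ} is in fact slightly more careful than the paper's, which simply asserts that Lemma~\ref{lem:sparse,hcm} ``directly yields'' the bound with $a_u$ in place of the actual average degree; your monotonicity check in $d$ is exactly what fills that small gap.
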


\begin{proof}
Let~$u$ be an arbitrary vertex of $G$, and suppose that $F\subset G[N(u)]$ has~$y$ vertices. 
By assumption we know that~$F$ has average degree at most~$a_u$, and so Lemma~\ref{lem:sparse,hcm} 
directly yields~\ref{itm:mad,largeZ}.
    For~\ref{itm:mad,hcm} we note that $Z_F(\lam)\le {(1+\lam)}^y$ and hence by Lemma~\ref{lem:sparse,hcm} we have
\[
\beta_u\frac{\lam}{1+\lam}\frac{1}{Z_F(\lam)} + \gam_u \frac{\lam Z'_F(\lam)}{Z_F(\lam)}\ge
    \frac{\lam}{1+\lam}\Big(\beta_u{(1+\lam)}^{-y} + \gam_u y{(1+\lam)}^{-a_u}\Big),
\]
and we define the right-hand side to be~$g(y)$. 

The function~$g$ is strictly convex with a stationary minimum at
\[
y^* = a_u + \frac{\log\left(\frac{\beta_u}{\gam_u}\log(1+\lam)\right)}{\log(1+\lam)},
\]
and if we set~$g(y^*)=1$ for strong local $(\beta_u,\gam_u)$-occupancy, and solve for~$\beta_u$ we obtain the definition given in the statement of the lemma. 
Then the function~$\beta_u+\gam_u d_u$ is strictly convex in~$\gam_u$, and the given~$\gam_u$ is the unique minimiser.
    One checks that, indeed, setting~$\beta_u$ and~$\gamma_u$ to the announced values, and writing~$D_u$ for~$d_u{(1+\lam)}^{a_u}\log(1+\lam)$, we have
    \[
        \beta_u = \frac{1+\lambda}{\lambda}\cdot\frac{D_u}{W(D_u)\cdot(1+W(D_u))},
    \]
and hence
    \[\beta_u+\gamma_u d_u=\frac{1+\lam}{\lam}\left(\frac{D_u}{W(D_u)(1+W(D_u))}+\frac{D_u}{1+W(D_u)}\right)
    =\frac{1+\lam}{\lam} \frac{D_u}{W(D_u)}\]
as announced.
Furthermore,
    \[ y^* = \frac{W(D_u)}{\log(1+\lambda)},\]
and hence indeed
    \begin{align*}
        g(y^*) &= \frac{D_u\cdot{(1+\lambda)}^{-W(D_u)/\log(1+\lambda)}}{W(D_u)(1+W(D_u))} + \frac{W(D_u)}{1+W(D_u)}\\
               &= \frac{1}{1+W(D_u)}+\frac{W(D_u)}{1+W(D_u)}=1. \qedhere
    \end{align*}
\end{proof}

\noindent
Note the parameters~\eqref{eqn:betagam,triangle} obtained before in~\cite{DJKP18+local} for triangle-free graphs arise in the special case of the above lemma when~$a_u=0$ and~$d_u=\Delta$ for all~$u$.

\subsection{An excluded cycle length}\label{sub:Cfree}

If a graph contains no subgraph isomorphic to the cycle~$C_k$, then the local path length in~$G$ is bounded. 
It follows from a theorem of Erd\H{o}s and Gallai~\cite{ErGa59} that we may use Lemma~\ref{lem:mad} with~$a_u=k-3$ in particular, and then apply the main framework to prove the following result.

\begin{theorem}\label{thm:Cfree}
    For any graph~$G$ of maximum degree~$\Delta$ that contains no subgraph isomorphic to~$C_k$, for some~$k\in\{3,\dotsc,\Delta+1\}$, the following statements hold.
\begin{enumerate}
\item\label{itm:Cfree,occfrac}
For any $\lam>0$, the occupancy fraction of the hard-core model on $G$ at fugacity~$\lam$ satisfies
\[
    \frac{1}{|V(G)|}\frac{\lam Z_G'(\lam)}{Z_G(\lam)} \ge \max_{0\le x\le\lam}\left\{ \frac{x}{1+x} \frac{W(\Delta{(1+x)}^{k-3}\log(1+x))}{\Delta{(1+x)}^{k-3}\log(1+x)} \right\}.
\]
\item\label{itm:Cfree,chif}
For any $\eps>0$ there exists $\delta_0$ such that there is a fractional colouring of~$G$ where each~$u\in V(G)$ is coloured with a subset of the interval
\[
\interval[co,scaled]{0}{(1+\eps)\max\left\{\frac{\deg(u)}{\log(\deg(u)/k)},\,\frac{\delta_0}{\log\delta_0}k\right\}}.
\]
In particular, the fractional chromatic number of $G$ satisfies $\chi_f(G) \le (1+o(1))\Delta/\log (\Delta/k)$ as $\Delta\to\infty$.
\item\label{itm:Cfree,chic}
For any $\eps>0$ there exist $\delta_0$ and $\Delta_0$ such that the following holds for all $\Delta\ge\Delta_0$. 
If $\sH=(L,H)$ is a cover of~$G$ such that for each~$u\in V(G)$
\[
|L(u)|\ge
(1+\eps)\max\left\{\frac{\deg(u)}{\log(\deg(u)/(k\log\Delta))},\,\frac{\delta_0}{\log\delta_0}k\log\Delta\right\},
\]
then $G$ is $\sH$-colourable.
In particular, if $k=\Delta^{o(1)}$, then the correspondence chromatic number of~$G$ satisfies $\chi_c(G) \le (1+o(1))\Delta/\log \Delta$ as $\Delta\to\infty$.
\end{enumerate}
\end{theorem}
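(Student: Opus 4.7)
The plan is to reduce all three parts to Lemma~\ref{lem:mad} applied with $a_u = k-3$, and then invoke Theorems~\ref{thm:main,occfrac},~\ref{thm:main,chif}, and~\ref{thm:main,chic,convenient} in turn. The reduction itself is immediate: since $G$ is $C_k$-free, the neighbourhood $G[N(u)]$ of any vertex $u$ cannot contain a copy of $P_{k-1}$, as such a path closed through $u$ would form a $C_k$. By the Erd\H{o}s--Gallai theorem on paths, any graph with no $P_{k-1}$ on $y$ vertices has at most $(k-3)y/2$ edges; applied to every subgraph this gives $\mad(G[N(u)])\le k-3$, as required by Lemma~\ref{lem:mad}.

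For part~\ref{itm:Cfree,occfrac}, a direct application of Theorem~\ref{thm:main,occfrac} with the parameters from Lemma~\ref{lem:mad}\ref{itm:mad,hcm} (taking $d_u = \Delta$) at fugacity~$x$ gives the stated inequality at fugacity~$x$. To upgrade to the maximum over $x\in[0,\lam]$, use the standard monotonicity of the occupancy fraction $x\mapsto xZ'_G(x)/Z_G(x)$ in the fugacity: a lower bound at any $x\le\lam$ remains a lower bound at~$\lam$.

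For part~\ref{itm:Cfree,chif}, apply Theorem~\ref{thm:main,chif} with the Lemma~\ref{lem:mad}\ref{itm:mad,hcm} parameters at $d_u = \deg(u)$. Given $\eps>0$, fix a small constant $c = c(\eps)>0$ and take the fugacity $\lam = c/\max\{k-3,1\}$, so that ${(1+\lam)}^{k-3}\to e^c$ while both $\log(1+\lam)$ and $(1+\lam)/\lam$ can be tracked asymptotically. The explicit formula
\[
\beta_u + \gam_u\deg(u) = \frac{1+\lam}{\lam}\frac{\deg(u){(1+\lam)}^{k-3}\log(1+\lam)}{W(\deg(u){(1+\lam)}^{k-3}\log(1+\lam))}
\]
then simplifies via the asymptotic $W(x) \sim \log x$ recorded in Subsection~\ref{sub:Lambert} to $(1+O(c))\deg(u)/\log(\deg(u)/k)$ whenever $\deg(u)/k$ is sufficiently large. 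For vertices of smaller degree, the $\delta_0 k/\log\delta_0$ floor inside the maximum absorbs the slack, and one recovers a fractional colouring with intervals of the desired lengths.

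Part~\ref{itm:Cfree,chic} follows the same template via Theorem~\ref{thm:main,chic,convenient}, with the same choice of $\lam$ and $\ell$ of order $(1+\lam)|L(u)|/\lam$. Condition~\eqref{eq:main,chic,simple,L} then reduces to the stated list size lower bound, after absorbing the $1-\sqrt{(7\log\Delta)/\ell}$ factor into the $(1+\eps)$ slack. Condition~\eqref{eq:main,chic,simple,Zlarge} is verified using Lemma~\ref{lem:mad}\ref{itm:mad,largeZ}: for any induced $F\subset G[N(u)]$ on $y\ge\ell/8$ vertices,
\[
\log Z_F(\lam) \ge \frac{y(1-{(1+\lam)}^{-(k-3)})}{k-3} \gtrsim \frac{yc}{k-3},
\]
which exceeds $\log(8\Delta^4)$ as soon as $\ell\gtrsim k\log\Delta$; the hypothesised floor of order $k\log\Delta$ on $|L(u)|$ ensures this. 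The main obstacle will be to align the constants $\lam$, $\ell$, $\delta_0$ and $\Delta_0$ uniformly in $\deg(u)$, especially in the crossover regime $\deg(u)\asymp k\log\Delta$ where the two arguments of the maximum in~\ref{itm:Cfree,chic} become comparable.
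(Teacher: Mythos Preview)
Your proposal is correct and follows the paper's approach exactly: reduce via Erd\H{o}s--Gallai to $\mad(G[N(u)])\le k-3$, apply Lemma~\ref{lem:mad} with $a_u=k-3$, and then invoke Theorems~\ref{thm:main,occfrac},~\ref{thm:main,chif}, and~\ref{thm:main,chic,convenient} in turn.

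One detail to tighten in part~\ref{itm:Cfree,chic}: the parameter~$\ell$ in Theorem~\ref{thm:main,chic,convenient} is a single global constant, not a function of~$u$, so it cannot be taken ``of order $(1+\lam)|L(u)|/\lam$''. The paper instead fixes~$\ell$ as (essentially) the minimal value for which Lemma~\ref{lem:mad}\ref{itm:mad,largeZ} guarantees~\eqref{eq:main,chic,simple,Zlarge}, namely $\ell\asymp (k/\lam)\log\Delta$, and then takes $d_u = \max\{\deg(u),\,\delta_0 k\log\Delta\}\big/\bigl(\tfrac{\lam}{1+\lam}\tfrac{\ell}{1-\eta}\bigr)$ in Lemma~\ref{lem:mad}\ref{itm:mad,hcm}; this is what makes~\eqref{eq:main,chic,simple,L} collapse to the stated list-size bound. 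The paper also lets~$\lam\to 0$ (specifically $1/\lam = k\log\delta_0$ for~\ref{itm:Cfree,chif} and $1/\lam = k\log\log\Delta$ for~\ref{itm:Cfree,chic}) rather than fixing a constant, which forces $(1+\lam)^{k-3}\to 1$ and somewhat streamlines the asymptotics; your constant-$\lam$ route works as well once~$c$ is chosen small in terms of~$\eps$.
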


\begin{proof}
For every vertex $u\in V(G)$, no subgraph~$F$ of~$G[N(u)]$ contains a $(k-1)$-vertex path.
Hence a theorem of Erd\H{o}s and Gallai~\cite{ErGa59} implies that~$F$ has average degree at most~$k-3$. 
Having at our disposal the analysis of the hard-core model in graphs with bounded local $\mad$ given by Lemma~\ref{lem:mad} with $a_u=k-3$ for all $u$, the theorem follows easily from the main framework. 

Setting $d_u=\Delta$ for all $u$, statement~\ref{itm:Cfree,occfrac} then follows from Theorem~\ref{thm:main,occfrac} and the fact that the occupancy fraction of the hard-core model at fugacity~$\lam$ is strictly increasing in~$\lam$~\cite[Prop.~1]{DJPR18}. 

Statement~\ref{itm:Cfree,chif} follows from Theorem~\ref{thm:main,chif} and some asymptotic analysis. 
With~$d_u=\deg(u)$ in Lemma~\ref{lem:mad} we deduce that~$G$ admits a fractional colouring where each vertex~$u$ is coloured by a subset of the interval~$\interval[co]{0}{c_u}$ with
\begin{align}
c_u 
  &\coloneqq \beta_u + \gam_u\deg(u)
    \\&= \frac{1+\lam}{\lam} \frac{\deg(u){(1+\lam)}^{k-3}\log(1+\lam)}{W(\deg(u){(1+\lam)}^{k-3}\log(1+\lam))}.
\end{align}
Take $1/\lam= k\log\delta_0$.
This gives $\lam=o(1)$ and $k\lam=o(1)$ as $\delta_0\to\infty$ since $k\ge 3$. It follows from the asymptotic properties of $W$ (see Section~\ref{sub:Lambert}) that, if $\deg(u)\ge \delta_0k$, then $c_u \sim \deg(u) / \log (\deg(u)/k)$ as $\delta_0\to\infty$. 

For~\ref{itm:Cfree,chic} we apply Theorem~\ref{thm:main,chic,convenient}. To fulfil~\eqref{eq:main,chic,simple,Zlarge}, it suffices by Lemma~\ref{lem:mad}\ref{itm:mad,largeZ} to have
\begin{equation}\label{eq:Cfree,chic,elllarge}
\ell \ge 
\begin{cases}
\frac{8\log(8\Delta^4)}{\log(1+\lam)} & \text{ if $k=3$,}\\
    \frac{8(k-3)\log(8\Delta^4)}{1-{(1+\lam)}^{3-k}} & \text{ if $k>3$.}
\end{cases}
\end{equation}
because we then have $Z_F(\lam)\ge 8\Delta^4$ for any $u\in V(G)$ and any subgraph $F$ of $G[N(u)]$ on $y\ge\ell/8$ vertices. 
We set
\[
\ell =
\begin{cases}
\frac{8\log(8\Delta^4)}{\log(1+\lam)} & \text{ if $k=3$,}\\
    \frac{8\log(8\Delta^4)}{\frac{\lam}{1+\lam}-\frac{k-4}{2}{(\frac{\lam}{1+\lam})}^2}& \text{ if $k>3$.}
\end{cases}
\]
    Supposing $\lam=o(1)$ and $k\lam = o(1)$ as $\Delta\to\infty$, we deduce that~\eqref{eq:Cfree,chic,elllarge} holds provided~$\Delta$ is large enough (by the expansion of ${(1+\lam)}^{3-k}={(1-\frac{\lam}{1+\lam})}^{k-3}$), that $\ell=\omega(\log\Delta)$ and hence $\ell > 7\log \Delta$ for all large enough~$\Delta$, and that
\[
\frac{\lam}{1+\lam} \frac{\ell}{1-\sqrt{(7\log\Delta)/\ell}} \sim 32\log\Delta.
\]
To this end, we choose $1/\lam = k\log\log\Delta$. 
Now we apply Lemma~\ref{lem:mad}\ref{itm:mad,hcm} with
\[
d_u \coloneqq \frac{\max\{\deg(u),\delta_0\log\Delta\}}{\frac{\lam}{1+\lam} \frac{\ell}{1-\sqrt{(7\log\Delta)/\ell}}}.
\]
Let $\beta_u$ and~$\gam_u$ be as given by the lemma in this case. 
It suffices to suppose that $\deg(u)\ge \delta_0 k\log\Delta$ and,
    writing~$\eta$ for~$\sqrt{(7\log\Delta)/\ell}$ to improve readability,
    show that
\begin{align}
\frac{\lam}{1+\lam}\frac{\ell}{1-\eta}(\beta_u + \gam_ud_u) 
    &= \frac{1+\lam}{\lam}\frac{\deg(u) {(1+\lam)}^{k-3}\log(1+\lam)}{W\left(\frac{\deg(u)}{\frac{\lam}{1+\lam}\frac{\ell}{1-\eta}}{(1+\lam)}^{k-3}\log(1+\lam)\right)}
\\&\le (1+\eps)\frac{\deg(u)}{\log(\deg(u)/(k\log\Delta))},
\end{align}
which will hold if $\delta_0$ and $\Delta_0$ are large enough: this follows by the choice of~$\ell$ and~$\lam$
and the asymptotic properties of~$W$. 
\end{proof}

\subsection{Bounded local triangle count}\label{sub:sparsenbhds}

The following result, where we suppose that each vertex of $G$ is contained in at most $t$ triangles, implies and elaborates upon Theorem~\ref{thm:sparsenbhds,chi}. With $t=\Delta^2/f$ we regain the form of Theorem~\ref{thm:sparsenbhds,chi}. 

\begin{theorem}\label{thm:sparsenbhds}
For any graph $G$ of maximum degree~$\Delta$ in which each vertex is contained in at most~$t\ge 1/2$ triangles, the following statements hold.
\begin{enumerate}
\item\label{itm:sparsenbhds,occfrac}
For any $\lam>0$, the occupancy fraction of the hard-core model on $G$ at fugacity~$\lam$ satisfies
\[
    \frac{1}{|V(G)|}\frac{\lam Z_G'(\lam)}{Z_G(\lam)} \ge \max_{0\le x\le\lam}\left\{ \frac{x}{1+x} \frac{W(\Delta{(1+x)}^{\sqrt{2t}}\log(1+x))}{\Delta{(1+x)}^{\sqrt{2t}}\log(1+x)} \right\}.
\]
\item\label{itm:sparsenbhds,chif}
For any~$\eps>0$ there exists~$\delta_0$ such that there is a fractional colouring of~$G$ such that each~$u\in V(G)$ is coloured with a subset of the interval
\[
\interval[co,scaled]{0}{(1+\eps)\max\left\{\frac{\deg(u)}{\log(\deg(u)/\sqrt t)},\,\frac{\delta_0}{\log\delta_0} \sqrt{t}\right\}}.
\]
In particular, the fractional chromatic number of $G$ satisfies
        $\chi_f(G) \le (1+o(1))\Delta/\log(\Delta/\sqrt t)$ as $\Delta\to\infty$.
\item\label{itm:sparsenbhds,chic}
For any $\eps>0$ there exist $\delta_0$ and $\Delta_0$ such that the following holds for all $\Delta\ge\Delta_0$. 
If $\sH=(L,H)$ is a cover of $G$ such that for each $u\in V(G)$
\[
|L(u)|\ge
        (1+\eps)\max\left\{\frac{\deg(u)}{\log(\deg(u)/(\sqrt t\log\Delta))},\,\frac{\delta_0}{\log\delta_0} \sqrt{t}\log\Delta\right\},
\]
then $G$ is $\sH$-colourable.
In particular, if $t=\Delta^{o(1)}$, then the correspondence chromatic number of~$G$ satisfies $\chi_c(G) \le (1+o(1))\Delta/\log \Delta$ as $\Delta\to\infty$.
\end{enumerate}
\end{theorem}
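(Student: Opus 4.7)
The strategy is to deduce all three parts from the general framework applied with Lemma~\ref{lem:mad} at $a_u = \sqrt{2t}$, proceeding in close parallel to the proof of Theorem~\ref{thm:Cfree}. The only substantive preliminary step is to establish the local average-degree bound. Since each vertex~$u$ lies in at most~$t$ triangles of~$G$, the subgraph~$G[N(u)]$ has at most~$t$ edges, and so every subgraph $F \subseteq G[N(u)]$ on~$y$ vertices satisfies
\[
\frac{2|E(F)|}{y} \le \min\left\{y-1,\ \frac{2t}{y}\right\} \le \frac{-1+\sqrt{1+8t}}{2} \le \sqrt{2t},
\]
where the maximum of the inner minimum over~$y > 0$ is attained at $y = (1+\sqrt{1+8t})/2$. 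Hence $\mad(G[N(u)]) \le \sqrt{2t}$, and because this bound holds for all subgraphs of $G[N(u)]$ rather than just induced ones, Lemma~\ref{lem:mad}\ref{itm:mad,hcm} in fact provides \emph{strong} local ${(\beta_u,\gam_u)}_u$-occupancy, which is what is needed for the correspondence colouring conclusion.

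For part~\ref{itm:sparsenbhds,occfrac}, take $d_u = \Delta$ in Lemma~\ref{lem:mad}\ref{itm:mad,hcm} and feed the resulting parameters into Theorem~\ref{thm:main,occfrac}; the maximum over $x\in[0,\lam]$ then follows from monotonicity of the occupancy fraction in the fugacity~\cite[Prop.~1]{DJPR18}. For part~\ref{itm:sparsenbhds,chif}, take $d_u = \max\{\deg(u), \delta_0\sqrt t\}$ and choose the fugacity $1/\lam = \sqrt t\log\delta_0$, so that both~$\lam$ and $\sqrt t\lam$ vanish as $\delta_0\to\infty$. Then $(1+\lam)^{\sqrt{2t}}\log(1+\lam) \sim \lam$, and using $W(z)\sim\log z$ as $z\to\infty$ one verifies that Lemma~\ref{lem:mad}\ref{itm:mad,hcm} yields $\beta_u + \gam_u d_u \le (1+\eps)\max\{\deg(u)/\log(\deg(u)/\sqrt t),\,(\delta_0/\log\delta_0)\sqrt t\}$. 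Theorem~\ref{thm:main,chif} then delivers the required fractional colouring.

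For part~\ref{itm:sparsenbhds,chic} we apply Theorem~\ref{thm:main,chic,convenient}. Condition~\eqref{eq:main,chic,simple,Zlarge} is verified by Lemma~\ref{lem:mad}\ref{itm:mad,largeZ}: taking $\ell$ of order at least $\sqrt{2t}\log(8\Delta^4)/(1-(1+\lam)^{-\sqrt{2t}})$ ensures $Z_F(\lam)\ge 8\Delta^4$ whenever $|V(F)|\ge\ell/8$. Set $1/\lam = \sqrt t\log\log\Delta$, so that $\lam = o(1)$, $\sqrt t\lam = o(1)$, $\ell = \omega(\log\Delta)$, and
\[
\frac{\lam}{1+\lam}\cdot\frac{\ell}{1-\sqrt{(7\log\Delta)/\ell}} \sim 32\log\Delta.
\]
Invoking Lemma~\ref{lem:mad}\ref{itm:mad,hcm} with
\[
d_u = \frac{\max\{\deg(u),\ \delta_0\sqrt t\log\Delta\}}{\frac{\lam}{1+\lam}\cdot\frac{\ell}{1-\sqrt{(7\log\Delta)/\ell}}},
\]
the asymptotic properties of $W$ together with the hypothesis $t = \Delta^{o(1)}$ show that~\eqref{eq:main,chic,simple,L} is implied by the list-size bound assumed in the theorem.

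The main obstacle is the tight coordination of parameters in part~\ref{itm:sparsenbhds,chic}: one must choose~$\lam$ and~$\ell$ so that simultaneously $\ell>7\log\Delta$, $\sqrt t\lam=o(1)$, $Z_F(\lam)$ meets the lower bound of~\eqref{eq:main,chic,simple,Zlarge}, and the target $(1+\eps)$-factor list-size bound emerges from the Lambert-$W$ expression in Lemma~\ref{lem:mad}\ref{itm:mad,hcm}. This is structurally identical to the corresponding calculation in the proof of Theorem~\ref{thm:Cfree}, under the substitutions $k-3\mapsto\sqrt{2t}$ and $k\mapsto\sqrt t$, with the hypothesis $t=\Delta^{o(1)}$ playing the role of $k=\Delta^{o(1)}$, so no genuinely new difficulty arises beyond repeating that asymptotic analysis in the present setting.
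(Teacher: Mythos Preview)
Your proposal is correct and follows essentially the same approach as the paper: bound the average degree of any subgraph of $G[N(u)]$ by $\sqrt{2t}$, apply Lemma~\ref{lem:mad} with $a_u=\sqrt{2t}$, and then mirror the proof of Theorem~\ref{thm:Cfree} with $k-3$ replaced by $\sqrt{2t}$. The only slight slip is that in part~\ref{itm:sparsenbhds,chic} you invoke the hypothesis $t=\Delta^{o(1)}$ to verify~\eqref{eq:main,chic,simple,L}, whereas that hypothesis is only needed for the ``In particular'' corollary about $\chi_c(G)$; the main list-size statement holds without it, since your choice $1/\lam=\sqrt t\log\log\Delta$ already gives $\sqrt{2t}\,\lam=o(1)$ independently of how $t$ grows with $\Delta$.
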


\noindent
The requirement that $t$ be at least~$1/2$ is not restrictive: for any~$t\in\interval[co]{0}{1}$ each vertex being contained in at most~$t$ triangles is equivalent to being triangle-free. 
This merely helps us avoid issues with small~$t$ when choosing parameters.
Before proving Theorem~\ref{thm:sparsenbhds}, we compare it with related earlier results. 
For convenience we restate a non-local Theorem~\ref{thm:sparsenbhds}\ref{itm:sparsenbhds,chic} with~$t=\Delta^2/f$.

\begin{corollary}\label{cor:sparsenbhds}
For any $\eps>0$ there exists $\Delta_0$ such that the following holds for all~$\Delta\ge\Delta_0$.
    For any graph~$G$ of maximum degree~$\Delta$ in which each vertex is contained in at most~$\Delta^2/f$ triangles, where ${(\log\Delta)}^{2/\eps}\le f\le \Delta^2+1$, we have $\chi_c(G)\le (1+\eps)\Delta/\log\sqrt f$. 
\end{corollary}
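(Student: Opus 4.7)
The plan is to derive Corollary~\ref{cor:sparsenbhds} directly from Theorem~\ref{thm:sparsenbhds}\ref{itm:sparsenbhds,chic} applied with $t\coloneqq\Delta^2/f$ and a constant list-assignment. The hypothesis $f\le\Delta^2+1$ ensures $t\ge\Delta^2/(\Delta^2+1)\ge 1/2$ whenever $\Delta\ge 1$, so the bounded-triangle condition of Theorem~\ref{thm:sparsenbhds} is met (the boundary $f=\Delta^2+1$ forces $t<1$, i.e.\ the triangle-free case). Take $k\coloneqq\lceil(1+\eps)\Delta/\log\sqrt f\rceil$ and the constant lists $L(u)\coloneqq\{1,\dots,k\}$ for every $u\in V(G)$; the induced $k$-fold cover $\sH=(L,H)$ has the property that every $\sH$-colouring is a proper $k$-colouring, so showing $G$ is $\sH$-colourable yields $\chi_c(G)\le k$.

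The main step is to verify the list-size requirement of Theorem~\ref{thm:sparsenbhds}\ref{itm:sparsenbhds,chic}. Invoking that theorem with some $\eps'=\eps'(\eps)>0$ and corresponding constants $\delta_0,\Delta_0$, one must check that
\[
k\ge(1+\eps')\max\left\{\frac{\deg(u)}{\log(\deg(u)/(\sqrt t\log\Delta))},\,\frac{\delta_0}{\log\delta_0}\sqrt t\log\Delta\right\}
\]
for all $u\in V(G)$. Substituting $\sqrt t=\Delta/\sqrt f$ gives $\sqrt t\log\Delta=(\Delta\log\Delta)/\sqrt f$ and $\Delta/(\sqrt t\log\Delta)=\sqrt f/\log\Delta$, so the first argument is bounded by
\[
\frac{\Delta}{\log(\sqrt f/\log\Delta)}=\frac{\Delta}{\log\sqrt f-\log\log\Delta}.
\]
The hypothesis $f\ge(\log\Delta)^{2/\eps}$ rearranges to $\log\log\Delta\le\eps\log\sqrt f$, which bounds this by $\Delta/((1-\eps)\log\sqrt f)$. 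The second argument $(\delta_0/\log\delta_0)(\Delta\log\Delta)/\sqrt f$ is of strictly smaller order once $\Delta$ is large relative to $\delta_0$ and $1/\eps$, again by the lower bound on $f$.

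Combining these gives $k\ge(1+\eps')\Delta/((1-\eps)\log\sqrt f)$ for an appropriately small $\eps'$ and large $\Delta_0$, whereupon Theorem~\ref{thm:sparsenbhds}\ref{itm:sparsenbhds,chic} supplies the $\sH$-colouring that proves the bound $\chi_c(G)\le k$. The principal obstacle is precisely this asymptotic bookkeeping at the end: the exponent $2/\eps$ in the hypothesis is tight against the target constant $(1+\eps)$ in the conclusion, so $\eps'$ must be chosen delicately (essentially of order $\eps-O(\eps^2)$), and the rounding loss in defining $k$ together with the residual distortion from the $-\log\log\Delta$ term must be absorbed into the choice of $\Delta_0$.
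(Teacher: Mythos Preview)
Your overall strategy matches the paper's sketch: set $t=\Delta^2/f$ and feed the constant $k=\lceil(1+\eps)\Delta/\log\sqrt f\rceil$ into Theorem~\ref{thm:sparsenbhds}\ref{itm:sparsenbhds,chic}. Two points, however, deserve correction.

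First, your logical setup is off. Exhibiting an $\sH$-colouring for the \emph{one} cover arising from the constant list-assignment only gives $\chi(G)\le k$, not $\chi_c(G)\le k$. The right argument is that Theorem~\ref{thm:sparsenbhds}\ref{itm:sparsenbhds,chic} applies to \emph{every} cover whose lists meet the stated lower bound; since any $k$-fold cover has $|L(u)|=k$, verifying $k\ge(1+\eps')\max\{\cdots\}$ yields $\sH$-colourability for all $k$-fold covers simultaneously, hence $\chi_c(G)\le k$.

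Second, the ``delicate'' choice of $\eps'$ you describe does not exist. From $f\ge(\log\Delta)^{2/\eps}$ you correctly obtain $\log\log\Delta\le\eps\log\sqrt f$ and thus the first argument in the maximum is at most $\Delta/((1-\eps)\log\sqrt f)$. But then the requirement $k\ge(1+\eps')\cdot\Delta/((1-\eps)\log\sqrt f)$ becomes $(1+\eps)(1-\eps)\ge 1+\eps'$, i.e.\ $\eps'\le-\eps^2$, which is impossible for any $\eps'>0$. So the constants in the simplified statement of Theorem~\ref{thm:sparsenbhds}\ref{itm:sparsenbhds,chic} are genuinely too tight for the corollary as written, and no amount of ``absorbing into $\Delta_0$'' repairs this. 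The paper's sketch sidesteps the issue by reverting to the sharper $W$-function form used \emph{inside} the proof of Theorem~\ref{thm:sparsenbhds}\ref{itm:sparsenbhds,chic} (cf.\ Lemma~\ref{lem:mad}\ref{itm:mad,hcm}), for which one checks that $W\big((1-o(1))\sqrt f/\log\Delta\big)$ is close enough to $\log\sqrt f$; the extra $-\log\log(\cdot)$ term in the expansion of $W$ gives just enough slack. If you wish to stay with the stated theorem, you must either relax the conclusion to $(1+O(\eps))\Delta/\log\sqrt f$ or strengthen the hypothesis to $f\ge(\log\Delta)^{c/\eps}$ for some $c>2$; either suffices for the downstream application in Subsection~\ref{sub:sparsenbhds,proof2}.
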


\begin{proof}[Sketch proof]
Without loss of generality we may assume that~$\eps$ is smaller than an absolute constant.
With $t=\Delta^2/f$ we have $\Delta/\sqrt t = \sqrt f$ and we need~$f$ large enough in terms of~$\Delta$ so that the asymptotic expansion of $W((1-o(1))\sqrt f/\log\Delta) = (1-o(1))\log\sqrt f$ is accurate enough. 
    This occurs if e.g.~$f\ge {(\log\Delta)}^{2/\eps}$ when~$\Delta$ is large enough in terms of~$\eps$.
\end{proof}

Improving on earlier results of Alon, Krivelevich and Sudakov~\cite{AKS99} and Vu~\cite{Vu02}, a statement similar to Corollary~\ref{cor:sparsenbhds} was recently proved by Achlioptas, Iliopoulos and Sinclair~\cite[Thm.~II.5]{AIS19} for the list chromatic number. They however required a much stronger lower bound on~$f$ of the form $f\ge \Delta^{\frac{2+2\eps}{1+2\eps}}{(\log\Delta)}^2$, this last expression being roughly~$\Delta^{2-\eps}$. 
They used this weaker statement together with a known reduction~\cite{AKS99} from the `small $f$' case to the easier `large $f$' case to obtain a quantitatively weaker bound than in Theorem~\ref{thm:sparsenbhds,chi} for chromatic number.
Armed with our stronger Corollary~\ref{cor:sparsenbhds}, we can perform this same reduction but without a noticeable degradation of the leading constant to obtain Theorem~\ref{thm:sparsenbhds,chi}. After next showing Theorem~\ref{thm:sparsenbhds}, we give the reduction in Subsection~\ref{sub:sparsenbhds,proof2}.
A large part of the proof of Theorem~\ref{thm:sparsenbhds} is omitted, being nearly identical to the
corresponding part in the proof of Theorem~\ref{thm:Cfree}.

\begin{proof}[Sketch proof of Theorem~\ref{thm:sparsenbhds}]
Fix a vertex $u\in V(G)$ and any subgraph $F\subseteq G[N(v)]$ on~$y$ vertices.
By assumption,~$F$ contains at most~$t$ edges. It follows that the average degree of~$F$ is at most
\[
\min\left\{y-1,\frac{2t}{y}\right\} \le \sqrt{2t}.
\]
Indeed, the first bound is straightforward as there are at most~$y-1$ possible neighbours for any vertex in~$F$. The second is also straightforward from the handshaking lemma. The minimum is maximised at $t=\binom{y}{2}$, which yields the statement.
Thus the theorem follows easily from the main framework by Lemma~\ref{lem:mad} with $a_u=\sqrt{2t}$ for all~$u$.
The remainder of the proof is nearly identical to the proof of Theorem~\ref{thm:Cfree} but with~$\sqrt{2t}$ in the place of~$k-3$, and it is omitted.
\end{proof}

\subsection{Proof of Theorem~\ref{thm:sparsenbhds,chi}}\label{sub:sparsenbhds,proof2}

Due to the condition on~$f$, Corollary~\ref{cor:sparsenbhds} does not directly imply Theorem~\ref{thm:sparsenbhds,chi}, at least not via~\eqref{eqn:params}. Instead we appeal to an iterative splitting procedure that was used similarly before~\cite{AKS99,AIS19}. We include the details of the derivation for completeness. This requires two results which are shown with the help of the Lov\'{a}sz local lemma.

\begin{lemma}[\cite{AKS99}]\label{lem:AKS}
For any graph $G$ of maximum degree $\Delta\ge2$ in which the neighbourhood of every vertex spans at most~$s$ edges, 
there exists a partition $V(G)=V_1\cup V_2$ such that~$V_i$ for~$i\in\{1,2\}$ induces a subgraph of maximum degree at most~$\Delta/2 + 2\sqrt{\Delta\log\Delta}$ in which the neighbourhood of every vertex spans at most $s/4 + 2\Delta^{3/2}\sqrt{\log\Delta}$ edges.
\end{lemma}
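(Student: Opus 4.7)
I would take a uniformly random $2$-colouring: independently assign each vertex of~$G$ to~$V_1$ or~$V_2$ with probability~$1/2$, then apply the Lovász local lemma to a collection of bad events tracking both the maximum degree and the local edge density inside each colour class. Concretely, for each vertex~$v\in V(G)$ and each~$i\in\{1,2\}$ introduce the two bad events
\[
A_{v,i} = \big\{v\in V_i \text{ and } \deg_{G[V_i]}(v) > \Delta/2 + 2\sqrt{\Delta\log\Delta}\big\}
\]
and
\[
B_{v,i} = \big\{v\in V_i \text{ and } e(G[N(v)\cap V_i]) > s/4 + 2\Delta^{3/2}\sqrt{\log\Delta}\big\}.
\]
Avoiding every $A_{v,i}$ controls the degrees, and avoiding every $B_{v,i}$ controls the edge counts in neighbourhoods.

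The event $A_{v,i}$ is the easy one: conditional on~$v\in V_i$, the quantity $\deg_{G[V_i]}(v)$ is a sum of at most~$\Delta$ independent Bernoulli$(1/2)$ variables with expectation at most $\Delta/2$, so the Chernoff bound stated in Subsection~\ref{sub:prob} (applied to the negatively correlated $\bY_i=1-\bX_i$ or, equivalently, to the upper-tail version) yields $\Pr(A_{v,i})\le e^{-\Omega(\log\Delta)}$, which is $\Delta^{-c}$ for any constant~$c$ when $\Delta$ is large. For~$B_{v,i}$, I would view $X\coloneqq e(G[N(v)\cap V_i])$, conditional on $v\in V_i$, as a function of the independent colour indicators ${\{\mathbf{1}_{u\in V_i}\}}_{u\in N(v)}$: its expectation is at most~$s/4$, and changing the colour of a single $u\in N(v)$ changes~$X$ by at most $\deg_{G[N(v)]}(u)\le \Delta-1$. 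McDiarmid's bounded-differences inequality, with $\sum_{u\in N(v)}{(\Delta-1)}^2 \le \Delta^3$, then yields a tail bound of the form $\Pr(B_{v,i})\le e^{-\Omega(\log\Delta)}$ at the threshold $2\Delta^{3/2}\sqrt{\log\Delta}$.

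Each of the events $A_{v,i}$ and $B_{v,i}$ depends only on the colours assigned to vertices in~$N[v]$, so two events at~$v$ and~$v'$ can share dependencies only if $N[v]\cap N[v']\neq\varnothing$, which requires $d_G(v,v')\le 2$. Hence each bad event is mutually independent of all but fewer than $4\Delta^2$ others. Tuning the concentration exponents so that $p\le\Delta^{-c}$ with~$c$ large enough that $4pd\le 1$ for $d=4\Delta^2$, the Lovász local lemma guarantees a positive probability of avoiding every $A_{v,i}$ and every $B_{v,i}$ simultaneously, which is exactly a partition $V_1\cup V_2$ satisfying both conclusions of the lemma.

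The main obstacle is the concentration for~$B_{v,i}$: the edge count is a quadratic statistic in the underlying independent Bernoullis (edges sharing an endpoint are positively correlated), so a direct Chernoff bound for a sum of independent indicators is unavailable. The McDiarmid route is saved by the observation that each $u\in N(v)$ participates in at most $\Delta-1$ of the edges counted, so $\sum c_u^2 \le \Delta\cdot{(\Delta-1)}^2$, and the threshold $2\Delta^{3/2}\sqrt{\log\Delta}$ matches $\sqrt{\sum c_u^2 \cdot \log\Delta}$ up to constants. Verifying that the resulting~$p$ beats the $\Delta^{-2}$ dependency barrier for the local lemma, even for $\Delta$ just slightly larger than~$2$, is the one calculation that needs to be pinned down carefully; handling genuinely small~$\Delta$ may require either sharper constants or an absorption of a small-$\Delta$ base case.
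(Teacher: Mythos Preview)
The paper does not supply its own proof of this lemma; it is quoted verbatim from Alon, Krivelevich and Sudakov~\cite{AKS99} and used as a black box in the iterative splitting of Subsection~\ref{sub:sparsenbhds,proof2}. Your proposal is precisely the argument given in~\cite{AKS99}: a uniformly random bipartition, Chernoff/Hoeffding for the degree events $A_{v,i}$, McDiarmid's bounded-differences inequality for the neighbourhood edge counts $B_{v,i}$, and the symmetric local lemma with dependency graph of radius~$2$.

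Your concentration constants are right on the nose. With $t=2\sqrt{\Delta\log\Delta}$ and $n\le\Delta$ summands, Hoeffding gives $\Pr(A_{v,i})\le \exp(-2t^2/n)\le\Delta^{-8}$; with $t=2\Delta^{3/2}\sqrt{\log\Delta}$ and $\sum_u c_u^2\le\Delta(\Delta-1)^2\le\Delta^3$, McDiarmid gives $\Pr(B_{v,i})\le\exp(-2t^2/\Delta^3)\le\Delta^{-8}$. Each event depends only on colours in $N[v]$, so the dependency degree is at most $4(1+\Delta+\Delta(\Delta-1))\le 4\Delta^2$, and $4pd\le 16\Delta^{-6}\le 1$ for all $\Delta\ge 2$. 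Your worry about genuinely small~$\Delta$ is unfounded: whenever $16\log\Delta\ge\Delta$ (which covers $\Delta$ up to about~$50$) the degree threshold $\Delta/2+2\sqrt{\Delta\log\Delta}$ already exceeds~$\Delta$, and similarly the edge threshold exceeds~$\binom{\Delta}{2}\ge s$, so both conclusions are vacuous there and no local-lemma argument is needed.
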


\begin{lemma}[\cite{AIS19}, cf.~also~\cite{AKS99}]\label{lem:AIS} 
    Given $\Delta$ and~$f$ sufficiently large, define the sequences ${(\Delta_t)}_{t\ge 0}$ and ${(s_t)}_{t\ge 0}$ as follows: $\Delta_0=\Delta$, $s_0=\Delta^2/f$, and
\begin{align}
\Delta_{t+1} &= \Delta_t/2 + 2\sqrt{\Delta_t\log\Delta_t},&
s_{t+1}      &= s_t/4 + 2\Delta_t^{3/2}\sqrt{\log\Delta_t}.
\end{align}
    For any $\delta\in\interval[oo]{0}{1/100}$ and~$\zeta>0$ such that $\zeta(2+\delta)<1/10$, let $j$ be the smallest nonnegative integer for which $f > {\left((1+\delta)\Delta/2^j\right)}^{\zeta(2+\delta)}$. 
    Then $\Delta_j \le(1+\delta)\Delta/2^j$ and $s_j\le{\left((1+\delta)\Delta/2^j\right)}^2/f$.
\end{lemma}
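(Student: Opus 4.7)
The plan is to induct on $t$ to maintain the invariants $\Delta_t \le D_t$ and $s_t \le S_t$, where $D_t \coloneqq (1+\delta)\Delta/2^t$ and $S_t \coloneqq D_t^2/f$. The base case $t=0$ is immediate since $\Delta_0 = \Delta \le D_0$ and $s_0 = \Delta^2/f \le S_0$. For the inductive step, monotonicity of the recurrences together with $\Delta_t \le D_t$ yields
\[
\Delta_{t+1} \le \frac{D_t}{2} + 2\sqrt{D_t\log D_t}, \qquad s_{t+1} \le \frac{s_t}{4} + 2D_t^{3/2}\sqrt{\log D_t}.
\]
Normalising with $\epsilon_t \coloneqq \Delta_t/D_t$ and $\sigma_t \coloneqq s_t/S_t$, and using $D_{t+1}=D_t/2$, $S_{t+1}=S_t/4$, these become
\[
\epsilon_{t+1} \le \epsilon_t + 4\sqrt{\log D_t/D_t}, \qquad \sigma_{t+1} \le \sigma_t + 8f\sqrt{\log D_t/D_t},
\]
starting from $\epsilon_0 = 1/(1+\delta)$ and $\sigma_0 = 1/(1+\delta)^2$.

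Next I would control the accumulated errors by a geometric-type sum. Since $D_t = D_0/2^t$ and $\log D_t \le \log D_0$, upper-bounding the terms and summing gives
\[
\sum_{t=0}^{j-1}\sqrt{\log D_t/D_t} \le \sqrt{\log D_0/D_0}\sum_{t=0}^{j-1} 2^{t/2} = O\bigl(\sqrt{\log D_0/D_{j-1}}\bigr),
\]
as the geometric series with ratio $\sqrt 2$ is dominated by its last term up to an absolute constant. The minimality of $j$ yields $f \le D_{j-1}^{\zeta(2+\delta)}$, so $D_{j-1} \ge f^{1/(\zeta(2+\delta))} \ge f^{10}$ by the hypothesis $\zeta(2+\delta) < 1/10$. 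Substituting,
\[
\epsilon_j \le \frac{1}{1+\delta} + O\bigl(\sqrt{\log \Delta}/f^{5}\bigr), \qquad \sigma_j \le \frac{1}{(1+\delta)^2} + O\bigl(\sqrt{\log \Delta}/f^{4}\bigr),
\]
and for $\Delta$ and $f$ sufficiently large these errors are dominated by the positive slacks $\delta/(1+\delta)$ and $(2\delta+\delta^2)/(1+\delta)^2$, giving $\epsilon_j \le 1$ and $\sigma_j \le 1$ as desired. Since partial sums of the errors are bounded by the same quantity, $\epsilon_t, \sigma_t \le 1$ for all $t \le j$, so the inductive hypothesis (which needs $\epsilon_t\le 1$ to apply the crude bound $\sqrt{\Delta_t\log\Delta_t}\le\sqrt{D_t\log D_t}$) is valid throughout.

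I expect the main obstacle to be verifying that the accumulated error indeed remains within the constant slack coming from the $(1+\delta)$ and $(1+\delta)^2$ factors. The hypothesis $\zeta(2+\delta) < 1/10$ is precisely what makes $D_{j-1}$ polynomially large in $f$, which is essential for absorbing the dangerous factor of $f$ multiplying the error in the $\sigma$-recurrence; the $\Delta$-recurrence has no such factor and is correspondingly easier.
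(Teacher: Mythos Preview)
The paper does not prove this lemma; it is cited from~\cite{AIS19}. So there is no in-paper proof to compare against. Your approach---tracking the normalised quantities $\epsilon_t=\Delta_t/D_t$ and $\sigma_t=s_t/S_t$ and controlling their accumulated increments by a geometric-type sum---is the right one and matches the standard argument.

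There is, however, a genuine gap in your final estimate. You bound $\sum_{t<j}\sqrt{(\log D_t)/D_t}$ by replacing each $\log D_t$ with $\log D_0\sim\log\Delta$, arriving at error terms $O(\sqrt{\log\Delta}/f^5)$ for $\epsilon_j$ and $O(\sqrt{\log\Delta}/f^4)$ for $\sigma_j$. You then assert these are dominated by the slack ``for $\Delta$ and $f$ sufficiently large'', but this fails: the lemma must hold for all large $\Delta$ and all large $f$ with no coupling between them, and if $f$ is held at some fixed large value while $\Delta\to\infty$, your error terms diverge. Your argument as written only establishes the lemma under the extra hypothesis $f\gtrsim(\log\Delta)^{1/8}$.

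The fix is easy. Since the increments grow essentially geometrically with ratio close to $\sqrt 2$, the sum is $O\bigl(\sqrt{(\log D_{j-1})/D_{j-1}}\bigr)$, controlled by its \emph{last} term rather than by a crude bound involving $\log D_0$. Now use both directions of the definition of $j$: minimality gives $D_{j-1}\ge f^{1/(\zeta(2+\delta))}>f^{10}$ as you noted, but also $f>D_j^{\zeta(2+\delta)}$ gives $D_{j-1}=2D_j<2f^{1/(\zeta(2+\delta))}$, whence $\log D_{j-1}=O_{\zeta,\delta}(\log f)$. The accumulated errors become $O_{\zeta,\delta}(\sqrt{\log f}/f^5)$ and $O_{\zeta,\delta}(\sqrt{\log f}/f^4)$, both small once $f$ is large enough in terms of $\delta,\zeta$ alone, independent of $\Delta$. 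With this correction the induction closes as you intended.
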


\begin{proof}[Proof of Theorem~\ref{thm:sparsenbhds,chi}]
Let~$\eps>0$. It suffices to prove that $\chi(G)\le (1+\eps)\Delta/\log \sqrt{f}$ for~$f$ (and thus~$\Delta$) sufficiently large.
We may assume that $f\le \Delta^{\eps^2(2+\eps^2)}$, otherwise we can apply Corollary~\ref{cor:sparsenbhds}. 
Without loss of generality, we may also assume that $\eps\le 1/11$. 
    Let~$\delta = \zeta = \eps^2$ (which is indeed less than~$1/100$) so that $\zeta(2+\delta)<1/10$ and we may apply Lemma~\ref{lem:AIS}. Let $j=j(\Delta, f,\delta,\zeta)$ be the integer given therein and, starting with the trivial partition~$\{V(G)\}$, iterate the following procedure~$j$ times to form a partition of~$V(G)$. 

In one iteration of the procedure, for each part~$W$ of the current partition, do the following. 
If the induced subgraph~$G[W]$ has maximum degree at most~$1$ then do nothing, or else split~$W$ into two parts as given by Lemma~\ref{lem:AKS}. 

    The ultimate partition of~$V(G)$ yields at most~$2^j$ induced subgraphs of~$G$, and by Lemma~\ref{lem:AIS} each such subgraph~$H$ has maximum degree at most~$1$, or it has maximum degree at most~$\Delta_*\coloneqq (1+\delta)\Delta/2^j$ and the property that every neighbourhood of $H$ spans at most~$\Delta_*^2/f$ edges. Observe that $\Delta_*^{\zeta(2+\delta)}<f \le{(2\Delta_*)}^{\zeta(2+\delta)}$ due to the choice of~$j$ in Lemma~\ref{lem:AIS} and the fact that $f\le \Delta^{\eps^2(2+\eps^2)} \le {((1+\delta)\Delta/2^0)}^{\zeta(2+\delta)}$ ---which implies that~$j\ge1$. Now either $\chi(H)\le 2$ by~\eqref{eqn:params,basic}, or $\chi(H)\le (1+\zeta)\Delta_*/\log\sqrt{f}$  by Corollary~\ref{cor:sparsenbhds} (for~$f$, hence~$\Delta_*$, sufficiently large). 
Therefore 
\[
\chi(G)\le 2^j\max\left\{2,\, (1+\zeta)\frac{\Delta_*}{\log\sqrt{f}}\right\} = \max\left\{2^{j+1},\, (1+\zeta)\frac{(1+\delta)\Delta}{\log\sqrt{f}}\right\}.
\]
We also have
\[
\frac{(1+\delta)\Delta}{\Delta/(4\log\sqrt{f})} =2(1+\delta)\log f < f^{\frac{1}{\zeta(2+\delta)}} \le 2\Delta_*=\frac{(1+\delta)\Delta}{2^{j-1}},
\]
where the first inequality follows from a choice of large enough $f$ and the second holds by the above range for~$f$ in terms of~$\Delta_*$.  This bounds the first argument in the maximisation as it yields
    that~$2^{j+1}\le \Delta/\log\sqrt{f}$.
    For the second argument, note that $(1+\zeta)(1+\delta) = {(1+\eps^2)}^2 \le (1+\eps)$ since~$\eps\le1/11$.
\end{proof}

\subsection{Bounded local cycle count}\label{sub:boundedC}

In Subsections~\ref{sub:Cfree} and~\ref{sub:sparsenbhds}, we showed that our analysis of the hard-core model under bounded local~$\mad$ can be applied effectively to graph with no~$C_k$ and graphs for which each vertex is contained in few triangles. We next note how to combine these two ideas, hinting at some further flexibility in our approach.

\begin{theorem}\label{thm:boundedC}
For any graph $G$ of maximum degree~$\Delta$ in which the subgraph induced by each neighbourhood contains at most~$t$ copies of~$P_{k-1}$, where~$t\ge1/2$ and~$k\ge3$,
the following statements hold.
\begin{enumerate}
\item\label{itm:boundedC,occfrac}
For any $\lam>0$, the occupancy fraction of the hard-core model on $G$ at fugacity $\lam$ satisfies
\[
    \frac{1}{|V(G)|}\frac{\lam Z_G'(\lam)}{Z_G(\lam)} \ge \max_{0\le x\le\lam}\left\{ \frac{x}{1+x} \frac{W(\Delta{(1+x)}^{k-3+\sqrt{2t}}\log(1+x))}{\Delta{(1+x)}^{k-3+\sqrt{2t}}\log(1+x)} \right\}.
\]
\item\label{itm:boundedC,chif}
For any $\eps>0$ there exists $\delta_0$ such that there is a fractional colouring of~$G$ such that each~$u\in V(G)$ is coloured with a subset of the interval
\[
\interval[co,scaled]{0}{(1+\eps)\max\left\{\frac{\deg(u)}{\log(\deg(u)/(k+\sqrt{t}))},\,\frac{\delta_0}{\log\delta_0} (k+\sqrt{t})\right\}}.
\]
In particular, the fractional chromatic number of $G$ satisfies
        $\chi_f(G) \le (1+o(1))\Delta/\log(\Delta/(k+\sqrt{t}))$ as $\Delta\to\infty$.
\item\label{itm:boundedC4,chic}
For any $\eps>0$ there exist $\delta_0$ and~$\Delta_0$ such that the following holds for all~$\Delta\ge\Delta_0$. 
If $\sH=(L,H)$ is a cover of~$G$ such that for each~$u\in V(G)$
\begin{align*}
|L(u)|\ge
        (1+\eps)\max\Bigg\{&\frac{\deg(u)}{\log(\deg(u)/((k+\sqrt{t})\log\Delta))},\,\\&\frac{\delta_0}{\log\delta_0}(k+\sqrt{t})\log\Delta\Bigg\},
\end{align*}
then $G$ is $\sH$-colourable.
In particular, if $k+\sqrt{t}=\Delta^{o(1)}$, then the correspondence chromatic number of~$G$ satisfies $\chi_c(G) \le (1+o(1))\Delta/\log \Delta$ as~$\Delta\to\infty$.
\end{enumerate}
\end{theorem}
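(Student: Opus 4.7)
The plan is to reduce to Lemma~\ref{lem:mad}, mimicking the proofs of Theorems~\ref{thm:Cfree} and~\ref{thm:sparsenbhds}. The key new structural input is an average-degree bound: if $F$ is any subgraph of $G[N(u)]$ on $y$ vertices, then $F$ contains at most $t$ copies of $P_{k-1}$, so I would first prove via a greedy path-deletion argument that the average degree of $F$ is at most $k-3+\sqrt{2t}$. Specifically, iteratively delete one edge of an arbitrary $P_{k-1}$ as long as the current graph still contains one; each deletion exhibits a copy of $P_{k-1}$ in $F$, and these copies are distinct because each deleted edge is absent from the graph at every later stage. By the Erd\H{o}s--Gallai theorem, at least $|E(F)|-(k-3)y/2$ iterations occur before no $P_{k-1}$ remains, so $|E(F)|\le (k-3)y/2 + t$. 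Combined with the trivial bound $d(F)\le y-1$, this gives
\[
d(F)\le\min\!\left\{y-1,\ k-3+\tfrac{2t}{y}\right\}\le k-3+\sqrt{2t},
\]
where the final inequality splits on whether $y\le\sqrt{2t}$ or $y\ge\sqrt{2t}$, using $k\ge3$ in the former case.

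With this in hand, Lemma~\ref{lem:mad} applies with $a_u = k-3+\sqrt{2t}$ for every $u$, and the rest is a faithful adaptation of the proof of Theorem~\ref{thm:Cfree} with $k-3$ replaced by $k-3+\sqrt{2t}$. Statement~\ref{itm:boundedC,occfrac} follows from Theorem~\ref{thm:main,occfrac} applied with $d_u=\Delta$, together with monotonicity of the hard-core occupancy fraction in $\lam$. Statement~\ref{itm:boundedC,chif} follows from Theorem~\ref{thm:main,chif} by setting $d_u=\deg(u)$ in Lemma~\ref{lem:mad}\ref{itm:mad,hcm}, choosing $1/\lam=(k+\sqrt{t})\log\delta_0$ so that $\lam=o(1)$ and $(k+\sqrt{t})\lam=o(1)$ as $\delta_0\to\infty$, and extracting the claimed bound from $W(x)=(1+o(1))\log x$.

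For statement~\ref{itm:boundedC4,chic} I would apply Theorem~\ref{thm:main,chic,convenient} following the template of Theorem~\ref{thm:Cfree}\ref{itm:Cfree,chic}. Condition~\eqref{eq:main,chic,simple,Zlarge} is verified using Lemma~\ref{lem:mad}\ref{itm:mad,largeZ}: setting
\[
\ell = \frac{8\log(8\Delta^4)}{\frac{\lam}{1+\lam}-\frac{k-4+\sqrt{2t}}{2}\!\left(\frac{\lam}{1+\lam}\right)^{\!2}}
\]
secures $Z_F(\lam)\ge 8\Delta^4$ whenever $|V(F)|\ge\ell/8$, provided $(k+\sqrt{t})\lam=o(1)$, by expansion of $(1+\lam)^{-a_u}$. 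Choosing $1/\lam=(k+\sqrt{t})\log\log\Delta$ then gives $\ell=\omega(\log\Delta)$, and via the $W$-asymptotics of Lemma~\ref{lem:mad}\ref{itm:mad,hcm} bounds the effective list-size requirement by $(1+\eps)\deg(u)/\log(\deg(u)/((k+\sqrt{t})\log\Delta))$ whenever $\deg(u)\ge\delta_0(k+\sqrt{t})\log\Delta$, which together with~\eqref{eq:main,chic,simple,L} yields the conclusion.

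The only genuinely new step is the average-degree bound; beyond it I expect no serious obstacle, since $k-3+\sqrt{2t}=\Theta(k+\sqrt{t})$ under the hypotheses $t\ge 1/2$ and $k\ge 3$, so $k+\sqrt{t}$ can play throughout the asymptotic role played by $k$ in Theorem~\ref{thm:Cfree}. The only delicate points are the two-regime case split in the average-degree bound and verifying that the single choice of fugacity controls both the $(1+\lam)^{-a_u}$-expansion (for~\eqref{eq:main,chic,simple,Zlarge}) and the $W$-asymptotics (for~\eqref{eq:main,chic,simple,L}), both of which are handled cleanly by $\lam\sim 1/((k+\sqrt{t})\log\log\Delta)$.
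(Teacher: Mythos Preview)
Your proposal is correct and follows essentially the same approach as the paper. The paper's proof sketch likewise bounds the average degree of any subgraph $F\subset G[N(u)]$ by $\min\{y-1,\,k-3+2t/y\}\le k-3+\sqrt{2t}$ via exactly your edge-deletion plus Erd\H{o}s--Gallai argument and the same two-regime case split, then applies Lemma~\ref{lem:mad} with $a_u=k-3+\sqrt{2t}$ and defers the remainder to the template of Theorem~\ref{thm:Cfree}; your choices of $\lam$ and $\ell$ also match those templates precisely.
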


\begin{proof}[Proof sketch]
Fix a vertex $u\in V(G)$ and any subgraph~$F\subset G[N(u)]$ on~$y$ vertices. 
By assumption,~$F$ contains at most~$t$ copies of~$P_{k-1}$.
It follows that the average degree of~$F$ is at most
\[
\min\left\{ y-1,\,k-3+ \frac{2t}{y}\right\} \le k-3 + \sqrt{2t}.
\]
Indeed, the first bound is straightforward as there are at most~$y-1$ possible neighbours for any vertex in~$f$.
For the second, we only need to remove at most~$t$ edges from~$F$ to destroy all copies of~$P_{k-1}$, so that by the result of Erd\H{o}s and Gallai~\cite{ErGa59} the remaining graph has at most~$y(k-3)/2$ edges. Thus~$F$ has at most $y(k-3)/2+t$ edges, implying the second bound.
We consider the subcases $y\le \sqrt{2t}$ and $y>\sqrt{2t}$ to crudely upper bound the minimum, from which the statement follows.
Thus the theorem follows easily from the main framework by using Lemma~\ref{lem:mad} with $a_u=k-3+\sqrt{2t}$ for all~$u$.
    Indeed the remainder of the proof is nearly identical to the proof of Theorem~\ref{thm:Cfree} but with~$k-3+\sqrt{2t}$ in the place of~$k-3$. The details are left to the interested reader.
\end{proof}

\subsection{Neighbourhoods with an excluded bipartite subgraph}

We remark that by other classic extremal results conclusions akin to those in Theorem~\ref{thm:Cfree} hold for a graph containing no subgraph of the form~$K_1 + T_{k-1}$, where~$T_{k-1}$ denotes some arbitrary $(k-1)$-vertex tree.
In fact, similar results hold (varying in the leading constants) for graphs containing no $K_1+H$ where $H$ is bipartite. 
Such observations were already made in~\cite{AKS99}, where one uses their analogue of Theorem~\ref{thm:sparsenbhds,chi} which has a larger, unspecified leading constant. 
Our framework permits the transfer of bounds on the extremal number of $H$ to chromatic number bounds in $(K_1+H)$-free graphs with better leading constants than via the reduction in~\cite{AKS99}. 
For example, in another `smooth' extension of Theorem~\ref{thm:molloy} one can also show with our framework that for fixed $s\ge t\ge 1$ and $G$ with no $K_1 + K_{s,t}$, we have the bound $\chi(G)\le (t+o(1))\Delta/\log\Delta$ as $\Delta\to\infty$. 
Of course one can also show a suite of local, list, and correspondence strengthenings, and even a `few copies' version along the lines of Theorem~\ref{thm:boundedC}. 
We do not give the details as the method is essentially the same as the other proofs we present in this section. 
The main idea is to use an extremal number result to bound $\mad(G[N(u)])$ for all $u\in V(G)$ (in the case mentioned above we use the well-known result on the Zarankiewicz Problem of Kővári, Sós, and Turán~\cite{KST54}), and then make suitable choices of $\ell$ and $\lam$. 
With the main work done by our framework and the details given already in this section, these remaining tasks are quite straightforward.



\section{Bounded local Hall ratio}\label{sec:localhall}

The following result generalises Theorem~\ref{thm:localhall,chi}.
Recall that the Hall ratio of a graph~$G$ is~$\rho(G) = \max\{|V(H)|/\alpha(H)\mid H\subseteq G\}$.

\begin{theorem}\label{thm:localhall}
Let $K\colon\interval[co]{1}{\infty}\to\interval[co]{1}{\infty}$ be defined in terms of the lower branch of the Lambert-W function by $\fn(y) \coloneqq -W_{-1}(-1/(ey))$.

For any graph~$G$ of maximum degree~$\Delta$ in which the neighbourhood of every vertex $u\in V(G)$ induces a subgraph of Hall ratio at most $\rho_u\ge 1$, and with $\rho\coloneqq\max_{u\in V(G)}\rho_u$, the following statements hold. 
\begin{enumerate}
\item\label{itm:localhall,occfrac}
For any~$\lam>0$, setting $k(x)= \fn(\rho x/\log(1+x))$, the occupancy fraction of the hard-core model on~$G$ at fugacity~$\lam$ satisfies
\[
\frac{1}{|V(G)|}\frac{\lam Z_G'(\lam)}{Z_G(\lam)} \ge \max_{0\le x\le\lam}\left\{ \frac{W(k(x)\Delta x/(1+x))}{k(x)\Delta} \right\}.
\]
\item\label{itm:localhall,chif}
For any $\eps > 0$ there exists $\delta_0$ such that there is a fractional colouring of~$G$ such that each~$u\in V(G)$ is coloured with a subset of the interval
\[
\interval[co,scaled]{0}{(1+\eps)\max\left\{\frac{\fn(\rho_u)\deg(u)}{\log(\fn(\rho_u)\deg(u))},\,\frac{\fn(\rho_u)\delta_0}{\log(\fn(\rho_u)\delta_0)}\right\}}.
\]
\item\label{itm:localhall,chic}
For any $\eps>0$ there exist $\delta_0$ and $\Delta_0$ such that the following holds for all~$\Delta\ge\Delta_0$ and~$\delta= \delta_0\rho\log\Delta$.
If $\sH=(L,H)$ is a cover of~$G$ such that for each~$u\in V(G)$
\[
|L(u)| \ge (1+\eps)\max\left\{\frac{\fn(\rho_u)\deg(u)}{\log\left(\frac{\fn(\rho_u)\deg(u)}{\rho\log\Delta}\right)},\,\frac{\fn(\rho_u)\delta}{\log\left(\frac{\fn(\rho_u)\delta}{\rho\log\Delta}\right)}\right\},
\]
then $G$ is $\sH$-colourable.
In particular, the correspondence chromatic number of $G$ satisfies $\chi_c(G)\le(1+o(1))\fn(\rho)\Delta/\log(\fn(\rho)\Delta/\rho)$ as $\Delta\to\infty$.
\end{enumerate}
Note also that $\fn(\rho_u)\sim \log \rho_u$ as $\rho_u\to\infty$.

\end{theorem}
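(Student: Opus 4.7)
The overall strategy mirrors that of Section~\ref{sec:mad}: establish a local occupancy statement tailored to the Hall ratio hypothesis, solve the resulting optimisation for $(\beta_u,\gam_u)$, and feed this into Theorems~\ref{thm:main,occfrac}, \ref{thm:main,chif}, and \ref{thm:main,chic,convenient} to obtain the three conclusions. The key tool adapted to the Hall ratio setting is Lemma~\ref{lem:shearerhcm}: it provides a lower bound on $\lam Z_F'(\lam)/Z_F(\lam)$ whenever a lower bound on $\log Z_F(\lam)$ is available, and the Hall ratio hypothesis delivers exactly such a bound.

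For the local occupancy step, fix $u\in V(G)$ and let $F$ be a subgraph of $G[N(u)]$ on $y$ vertices. Since the correspondence statement requires strong local occupancy, we allow $F$ to be non-induced; nevertheless, the induced subgraph of $G[N(u)]$ on $V(F)$ has Hall ratio at most $\rho_u$ and therefore contains an independent set of size at least $y/\rho_u$, which is also independent in~$F$. Hence $Z_F(\lam)\ge(1+\lam)^{y/\rho_u}$. Combining this with Lemma~\ref{lem:shearerhcm} and the monotonicity of~$\fn$ yields
\[
\frac{\lam Z_F'(\lam)}{Z_F(\lam)}\ge \frac{y\log(1+\lam)/\rho_u}{\fn(\rho_u\lam/\log(1+\lam))}.
\]
Writing $k_u\coloneqq\fn(\rho_u\lam/\log(1+\lam))$ and substituting into~\eqref{eqn:main}, (strong) local ${(\beta_u,\gam_u)}_u$-occupancy is implied by the convex inequality
\[
\beta_u\frac{\lam}{1+\lam}{(1+\lam)}^{-y/\rho_u}+\gam_u\frac{y\log(1+\lam)}{\rho_u k_u}\ge 1\quad\text{for all } y\ge 0.
\]
Setting the strictly convex left-hand side equal to~$1$ at its stationary point and then minimising $\beta_u+\gam_u d_u$ over the resulting relation gives, by a computation analogous to the one in the proof of Lemma~\ref{lem:mad}, the optimal value
\[
\beta_u+\gam_u d_u = \frac{k_u d_u}{W(\lam d_u k_u/(1+\lam))},
\]
attained at explicit values of $\beta_u$ and $\gam_u$ that can be read off from the stationarity conditions.

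Part~(i) then follows immediately from Theorem~\ref{thm:main,occfrac} applied uniformly with $d_u=\Delta$ and $\rho_u\le\rho$, using monotonicity of the hard-core occupancy fraction in the fugacity~\cite{DJPR18} to take the maximum over $0\le x\le\lam$. Part~(ii) follows from Theorem~\ref{thm:main,chif} by setting $d_u=\max\{\deg(u),\delta_0\}$ and choosing, say, $\lam=1/\log\delta_0$: this tends to zero slowly enough that $\lam/\log(1+\lam)\to 1$ and therefore $k_u\to\fn(\rho_u)$, yet $\lam d_u\to\infty$ so $W(\lam d_u k_u/(1+\lam))\sim\log(\fn(\rho_u)\deg(u))$, producing the announced interval length.

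Part~(iii) is the main technical obstacle; it proceeds via Theorem~\ref{thm:main,chic,convenient}. The condition~\eqref{eq:main,chic,simple,Zlarge} is guaranteed by our lower bound $Z_F(\lam)\ge(1+\lam)^{y/\rho_u}$ provided $\ell$ has order at least $\rho\log\Delta/\log(1+\lam)$. The list-size condition~\eqref{eq:main,chic,simple,L} is precisely the $\beta_u+\gam_u d_u$ optimisation applied with $\deg(u)$ rescaled by a factor $R\coloneqq\frac{\lam}{1+\lam}\frac{\ell}{1-\sqrt{(7\log\Delta)/\ell}}$. Choosing $\lam=1/\log\log\Delta$ (in the spirit of the analogous tuning in Subsections~\ref{sub:Cfree} and~\ref{sub:sparsenbhds}) simultaneously yields $k_u\sim\fn(\rho_u)$, allows one to take $\ell=\Theta(\rho\log\Delta\log\log\Delta)$ so that $R\sim\rho\log\Delta$, and causes $W\bigl(\lam\deg(u)k_u/(R(1+\lam))\bigr)=W(\deg(u)k_u/\ell)$ to evaluate asymptotically to $\log(\fn(\rho)\Delta/\rho)$. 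Aligning these three asymptotics with matching leading constants, while also verifying $\ell>7\log\Delta$ and the various minimum list-size thresholds, is the delicate final step; once this is carried out, Theorem~\ref{thm:main,chic,convenient} delivers the announced bound. The concluding asymptotic $\fn(\rho_u)\sim\log\rho_u$ as $\rho_u\to\infty$ follows directly from the expansion of $W_{-1}$ recalled in Subsection~\ref{sub:Lambert}.
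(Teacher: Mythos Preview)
Your overall strategy is exactly the paper's, and the final formula $\beta_u+\gam_u d_u = k_u d_u/W(\lam d_u k_u/(1+\lam))$ is correct. However, the local occupancy derivation contains a genuine logical error.

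You claim that strong local $(\beta_u,\gam_u)_u$-occupancy is implied by
\[
\beta_u\frac{\lam}{1+\lam}(1+\lam)^{-y/\rho_u}+\gam_u\frac{y\log(1+\lam)}{\rho_u k_u}\ge 1\quad\text{for all } y\ge 0.
\]
But the bound $Z_F(\lam)\ge(1+\lam)^{y/\rho_u}$ gives $1/Z_F(\lam)\le(1+\lam)^{-y/\rho_u}$, so replacing $1/Z_F(\lam)$ by $(1+\lam)^{-y/\rho_u}$ in the first term of~\eqref{eqn:main} bounds it from \emph{above}, not below. Concretely, for $F$ on $y$ vertices with $z=\log Z_F(\lam)$ and $w=y\log(1+\lam)/\rho_u\le z$, your derivation only yields
\[
\beta_u\frac{\lam}{1+\lam}\frac{1}{Z_F(\lam)}+\gam_u\frac{\lam Z_F'(\lam)}{Z_F(\lam)}\ge \beta_u\frac{\lam}{1+\lam}e^{-z}+\gam_u\frac{w}{k_u},
\]
and your displayed inequality at this particular $y$ asserts the larger quantity $\beta_u\frac{\lam}{1+\lam}e^{-w}+\gam_u w/k_u\ge 1$, which does not control the right-hand side above.

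The fix, which is what the paper does in Lemma~\ref{lem:localhall,localocc}, is to keep $z=\log Z_F(\lam)$ as the free variable: use the Hall ratio bound only inside the argument of~$\fn$ in Lemma~\ref{lem:shearerhcm} (so that $\lam Z_F'(\lam)/Z_F(\lam)\ge z/k_u$, retaining $z$ in the numerator), leave the first term as the exact $e^{-z}$, and then minimise $g(z)=\beta_u\frac{\lam}{1+\lam}e^{-z}+\gam_u z/k_u$ over $z\ge 0$. After the change of variable $w\leftrightarrow z$ this is the same convex function you wrote, which is why your optimised value of $\beta_u+\gam_u d_u$ comes out right despite the flawed implication. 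The remainder of your sketch (parts~(i)--(iii)) follows the paper's line; your choice $\lam=1/\log\log\Delta$ in part~(iii) differs from the paper's fixed small $\lam$ but works for the same asymptotic reasons.
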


\subsection{Local occupancy with bounded local Hall ratio}\label{sub:localhall,localocc}

We begin with the requisite local analysis of the hard-core model, which relies critically on Lemma~\ref{lem:shearerhcm}.

\begin{lemma}\label{lem:localhall,localocc}
For any graph $G$ in which the neighbourhood of every vertex $u\in V(G)$ induces a subgraph of Hall ratio at most~$\rho_u\ge 1$, the following holds. 
\begin{enumerate}
\item\label{itm:localhall,hcm}
    For any $\lam>0$ and collection ${(d_u)}_{u\in V(G)}$ of positive reals, 
        a choice of parameters that minimises $\beta_u + \gam_u d_u$ for all~$u\in V(G)$ subject to  strong local ${(\beta_u,\gam_u)}_u$-occupancy in the hard-core model on~$G$ at fugacity~$\lam$ is
\begin{align*}
\beta_u &=\frac{1+\lam}{\lam}\cdot\frac{\exp(W(k_u d_u \lam/(1+\lam)))}{1+W(k_u d_u \lam/(1+\lam))}\quad\text{and}\\
\gam_u  &= \frac{k_u}{1+W(k_u d_u \lam/(1+\lam))},
\end{align*}
where $k_u = K\big(\rho_u\lam/\log(1+\lam)\big)$.
Moreover, with $k=\max_{u\in V(G)} k_u$ and~$\beta$ and~$\gam$ obtained by replacing~$k_u$ in the definitions of~$\beta_u$ and~$\gam_u$ by~$k$, the graph~$G$ has strong local $(\beta, \gam)$-occupancy.
\item\label{itm:localhall,largeZ}
For any $u\in V(G)$ and any subgraph~$F$ of~$G[N(u)]$ on~$y$ vertices,
\begin{align*}
\log Z_F(\lam) \ge 
 \frac{y}{\rho_u}\log(1+\lam).
\end{align*}
\end{enumerate}
\end{lemma}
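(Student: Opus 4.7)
The plan is to prove (ii) first, since it feeds into (i), and then combine it with the entropy bound in Lemma \ref{lem:shearerhcm} to reduce strong local occupancy to an inequality in a single scalar $z=\log Z_F(\lam)$, after which the choice of $(\beta_u,\gam_u)$ is obtained by a two-step optimisation.

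For (ii), I would use only the definition of Hall ratio. If $F\subseteq G[N(u)]$ has $y$ vertices, then $\rho_u\ge |V(F)|/\alpha(F)$, so $F$ contains an independent set $I$ of size at least $y/\rho_u$. Every subset of $I$ is independent in $F$, hence
\[
Z_F(\lam) \;\ge\; \sum_{J\subseteq I}\lam^{|J|} \;=\; (1+\lam)^{|I|} \;\ge\; (1+\lam)^{y/\rho_u},
\]
which gives the claimed bound on $\log Z_F(\lam)$.

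For (i), I would combine Lemma \ref{lem:shearerhcm} with (ii). Lemma \ref{lem:shearerhcm} gives $\lam Z_F'(\lam)/Z_F(\lam)\ge \log Z_F(\lam)/\fn\!\bigl(y\lam/\log Z_F(\lam)\bigr)$, and (ii) bounds $y\lam/\log Z_F(\lam)\le \rho_u\lam/\log(1+\lam)$. Since $\fn$ is monotone increasing, the denominator is at most $k_u$. Writing $z=\log Z_F(\lam)$, strong local $(\beta_u,\gam_u)$-occupancy is therefore implied by
\[
\beta_u\frac{\lam}{1+\lam}e^{-z}+\gam_u\frac{z}{k_u}\;\ge\;1
\]
for every $z\ge 0$ that arises. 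The left-hand side is convex in $z$; a derivative calculation locates its minimum at $z^\ast = \log\!\bigl(\tfrac{\beta_u\lam k_u}{\gam_u(1+\lam)}\bigr)$, giving minimum value $(\gam_u/k_u)(1+z^\ast)$. Requiring this to equal $1$ ties $\beta_u$ to $\gam_u$.

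The main obstacle, and the only real calculation, is then to minimise $\beta_u+\gam_u d_u$ subject to this constraint. Substituting $t=k_u/\gam_u$ turns the problem into minimising $\tfrac{1+\lam}{\lam t}e^{t-1}+k_u d_u/t$; the stationary condition is $(t-1)e^{t-1}=k_u d_u\lam/(1+\lam)$, so by definition of the Lambert $W$-function $t=1+W\!\bigl(k_u d_u\lam/(1+\lam)\bigr)$. Unwinding the substitution produces the stated formulas for $\gam_u$ and $\beta_u$. Finally, for the uniform version, note that whenever $k_u\le k$ the inequality $\lam Z_F'(\lam)/Z_F(\lam)\ge \log Z_F(\lam)/k$ still holds, so the same optimisation with $k$ in place of $k_u$ yields strong local $(\beta,\gam)$-occupancy. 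The asymptotic $\fn(\rho_u)\sim \log \rho_u$ is immediate from the properties of $W_{-1}$ already recorded in Subsection \ref{sub:Lambert}.
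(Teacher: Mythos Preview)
Your proposal is correct and follows essentially the same route as the paper: prove (ii) from the Hall-ratio bound, feed it through Lemma~\ref{lem:shearerhcm} to get $\lam Z_F'(\lam)/Z_F(\lam)\ge z/k_u$, reduce strong local occupancy to the convex scalar inequality in $z$, and optimise. Your substitution $t=k_u/\gam_u$ and the resulting Lambert-$W$ equation $(t-1)e^{t-1}=k_ud_u\lam/(1+\lam)$ is in fact a cleaner derivation of the optimal $\gam_u$ than the paper gives (the paper just states the value and remarks that the minimisation ``can be shown with some calculus''). For the uniform $(\beta,\gam)$ statement your argument is also slightly different: you observe directly that $z/k_u\ge z/k$ so the same analysis runs with $k$ in place of $k_u$, whereas the paper instead verifies via calculus that $\beta_u$ and $\gam_u$ are increasing in $k_u$ and appeals to monotonicity of the occupancy condition; both are valid, and yours is the more direct route.
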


\begin{proof}
Fix an arbitrary vertex~$u\in V(G)$, and let~$F$ be any subgraph of~$G[N(u)]$ on~$y$ vertices. 
Then since~$G[N(u)]$ has Hall ratio at most~$\rho_u$, the graph~$F$ contains an independent set of size at least~$y/\rho_u$. 
This immediately implies part~\ref{itm:localhall,largeZ}.
We can now bound~$\lam Z'_F(\lam)/Z_F(\lam)$ from below with Lemma~\ref{lem:shearerhcm}:
\[
\frac{\lam Z_F'(\lam)}{Z_F(\lam)} \ge 
\frac{\log Z_F(\lam)}{\fn(y\lam/\log Z_F(\lam))} \ge 
\frac{\log Z_F(\lam)}{\fn(\rho_u\lam/\log(1+\lam))} = 
\frac{\log Z_F(\lam)}{k_u},
\]
where the second inequality follows from applying part~\ref{itm:localhall,largeZ} in the denominator. 

For part~\ref{itm:localhall,hcm}, we start working with arbitrary
    positive reals~$\beta_u, \gam_u$, and show that choosing them as in the statement gives the desired properties.
We have 
\[
\beta_u\frac{\lam}{1+\lam} \frac{1}{Z_F(\lam)} + \gam_u\frac{\lam Z_F'(\lam)}{Z_F(\lam)} \ge 
\beta_u\frac{\lam}{1+\lam}e^{-\log Z_F(\lam)} + \gam_u\frac{\log Z_F(\lam)}{k_u},
\]
which we can minimise over all possible nonnegative values of $\log Z_F(\lam)$.
Let the function $g\colon\RR\to\RR$ be defined such that the right-hand side above is~$g(\log Z_F(\lam))$, and note that~$k_u$ is independent of~$F$.
It is easy to verify that $g''(z) = \beta_u \frac{\lam}{1+\lam}e^{-z} > 0$, and hence that~$g$ is strictly convex. 
Then we bound~$g(\log Z_F(\lam))$ from below by finding the unique stationary point of~$g(z)$, which must be a global minimum. 
This occurs at
\[
z^*=\log \left(\frac{\beta_u}{\gam_u}\frac{\lam}{1+\lam}k_u\right),
\]
giving 
\[
g\big(\log Z_F(\lam)\big) \ge g(z^*) = \frac{\gam_u}{k_u}\left(1+\log \left(\frac{\beta_u}{\gam_u}\frac{\lam}{1+\lam}k_u\right)\right).
\]
This is equal to~$1$ when~$\beta_u$ is given in terms of~$\gam_u$ by 
\begin{equation}\label{eq:localhall,beta}
\beta_u = \frac{\gam_u}{k_u}\frac{1+\lam}{\lam}e^{\frac{k_u}{\gam_u}-1},
\end{equation}
and taking~$\gam_u$ as in the statement of the lemma, that is,
\[
\gam_u = \frac{k_u}{1+W(k_ud_u  \lam/(1+\lam))},
\]
means~$\beta_u$ as given by~\eqref{eq:localhall,beta} agrees with~$\beta_u$ as in the statement of the lemma. 
    This completes the proof that~$G$ has strong ${(\beta_u, \gam_u)}_u$-local occupancy.

For local occupancy with uniform~$(\beta, \gam)$ parameters it suffices to observe that increasing~$\beta_u$ and~$\gam_u$ retains local occupancy, and that~$\beta_u$ and~$\gam_u$ are increasing functions of~$k_u$, and hence of~$\rho_u$. 
    This is easy to do with a little calculus: first, because~$K\colon\interval[co]{1}{\infty}\to\interval[co]{1}{\infty}$ is increasing, we deduce that~$k_u$ increases with~$\rho_u$. Second, considering~$\gamma_u$ and writing~$t=d_u\lambda/(1+\lambda)$ for convenience, observe that the function~$x\to x/(1+W(xt))$ is increasing over~$\interval[co]{1}{\infty}$, its derivative being~$x\to \frac{{W(xt)}^2+W(tx)+1}{{(W(tx)+1)}^3}$, which is positive if~$x\ge1$ as~$t>0$.
Third, considering now~$\beta_u$ and using the same notation,
    the function~$x\to\frac{\exp(W(xt))}{1+W(xt)}$ is increasing
    over~$\interval[co]{1}{\infty}$ its derivate being~$x\to\frac{\exp(W(xt))\cdot{W(xt)}^2}{x\cdot{(W(xt)+1)}^3}$, which is positive if~$x\ge1$.

We note in addition that one can show the choice of~$\gam_u$ (with~$\beta_u$ as in~\eqref{eq:localhall,beta}) minimises $\beta_u+\gam_u d_u$ with some calculus similar to the analysis of~$g$. 
\end{proof}

\subsection{Proof of Theorem~\ref{thm:localhall}}\label{sub:localhall,proof}

\begin{proof}[Proof of Theorem~\ref{thm:localhall}]
We simply apply the main framework to derive the results. 
    Lemma~\ref{lem:localhall,localocc} gives for each collection~${(d_u)}_{u\in V(G)}$ a choice for every~$u\in V(G)$ of~$\beta_u$ and $\gam_u$ such that the hard-core model on $G$ at fugacity~$\lam$ has strong local ${(\beta_u, \gam_u)}_u$-local occupancy, and we compute
\[
\beta_u+\gam_u d_u = \frac{k_u d_u}{W(k_u d_u \lam/(1+\lam))}.
\]
We will choose~$d_u$ carefully to obtain each part of the theorem.

For~\ref{itm:localhall,occfrac} we are interested in local occupancy with uniform parameters but Lemma~\ref{lem:localhall,localocc} also gives suitable~$\beta$ and~$\gam$ for this.
Choosing~$d_u=\Delta$, applying Theorem~\ref{thm:main,occfrac}, and recalling that occupancy fraction is monotone increasing in~$\lam$ yields~\ref{itm:localhall,occfrac}.

Statement~\ref{itm:localhall,chif} follows from Theorem~\ref{thm:main,chif}
and some asymptotic analysis. 
With the choice $d_u=\max\{\deg(u),\delta\}$ any~$\lam>0$ we have
\[
\beta_u+\gam_u\deg(u) \le \frac{d_u k_u}{W(d_u k_u \lam/(1+\lam))},
\]
with $\lam=1/\log\delta$, for large enough~$\delta$ in terms of~$\eps$ we have
\[
\beta_u+\gam_u\deg(u) \le (1+\eps)\frac{\fn(\rho_u)d_u}{\log(\fn(\rho_u)d_u)},
\]
as required for~\ref{itm:localhall,chif}. 
For the asymptotic properties of $K$ and $W$ necessary for the final step we refer to Subsection~\ref{sub:Lambert}.

To obtain~\ref{itm:localhall,chic} we aim to apply Theorem~\ref{thm:main,chic,convenient}, and hence we must give a real~$\ell>7\log\Delta$ and show that~$\log Z_F(\lam)$ is large enough for all subgraphs of~$G$ induced by subsets of neighbourhoods of size at least~$\ell/8$.
By Lemma~\ref{lem:localhall,localocc}\ref{itm:localhall,largeZ}, every graph~$F$ induced by a subset of~$N(u)$ with~$y$ vertices satisfies
\[
\log Z_F(\lam) \ge \frac{y}{\rho_u}\log(1+\lam).
\]
    As before, we may assume that~$|L(u)|\le\Delta$ for otherwise
    we can simply colour such a vertex~$u$ at the end.
So with $\ell = 40\rho \log\Delta / \log (1+\lam)$, if~$y\ge\ell/8$ and~$\Delta\ge8$, then
\[
\log Z_F(\lam) \ge \frac{5\rho\log\Delta}{\rho_u}\ge \log(\Delta^5) \ge \log(8\Delta^4),
\]
as required for~\eqref{eq:main,chic,simple,Zlarge}. 
    Since~$\rho\ge1$, provided we choose~$\lam$ smaller than some absolute constant (e.g.~$302$) we will have $\ell>7\log\Delta$. 
Then writing $\eta=\sqrt{(7\log\Delta)/\ell}$ we infer that~$G$ is $\sH$-colourable when
\[
|L(u)| \ge \frac{\lam}{1+\lam}\frac{\ell}{1-\eta}\left(\beta_u + \gam_u\frac{\deg(u)}{\frac{\lam}{1+\lam}\frac{\ell}{1-\eta}}\right).
\]
This motivates the choice
\[
d_u=\frac{\max\left\{\deg(u),\delta\right\}}{\frac{\lam}{1+\lam}\frac{\ell}{1-\eta}},
\]
so that to apply Theorem~\ref{thm:main,chic,convenient} it now suffices
that
\[
|L(u)| \ge \max\left\{\frac{k_u\deg(u)}{W\left(\frac{k_u\deg(u)(1-\eta)\log(1+\lam)}{40\rho\log\Delta}\right)},\,\frac{k_u\delta}{W\left(\frac{k_u\delta(1-\eta)\log(1+\lam)}{40\rho\log\Delta}\right)}\right\}.
\]
To give an asymptotic analysis of this bound we want to apply~$W$ to a term that tends to infinity.
    With $\delta=\delta_0\rho\log\Delta$ we choose~$\lam$ such that $\log(1+\lam)=1/\log\delta_0$ so that with~$\delta_0$ large enough in terms of~$\eps$ we have not only $\ell>7\log\Delta$ but also~$\eta$ small enough that the above lower bound on~$|L(u)|$ is implied by
\[
|L(u)| \ge (1+\eps)\max\left\{\frac{\fn(\rho_u)\deg(u)}{\log\left(\frac{\fn(\rho_u)\deg(u)}{\rho\log\Delta}\right)},\,\frac{\fn(\rho_u)\delta}{\log\left(\frac{\fn(\rho_u)\delta}{\rho\log\Delta}\right)}\right\},
\]
as required.
\end{proof}



\section{Bounded clique number}\label{sec:clique}

The following result generalises Theorem~\ref{thm:clique,chi}.

\begin{theorem}\label{thm:clique}
    For any graph $G$ of maximum degree~$\Delta$ in which the largest clique containing any vertex~$u\in V(G)$ has size at most~$\omega_u\ge 3$, the following statements hold. 
    We write $\omega = \max_{u\in V(G)}\{\omega_u\}$ for an upper bound on the clique number of~$G$. 
    \begin{enumerate}
    \item\label{itm:clique,occfrac}
    For any $\lam>0$, the occupancy fraction of the hard-core model on~$G$ at fugacity~$\lam$ satisfies as~$\Delta\to\infty$,
    \[
    \frac{1}{|V(G)|}\frac{\lam Z_G'(\lam)}{Z_G(\lam)} \ge (1-o(1))\max\left\{ \frac{\log \Delta}{(\omega-2)\Delta\log\log\Delta}, \frac{1}{2\Delta}\sqrt{\frac{\log \Delta}{\log(\omega-1)}} \right\}.
    \]
    \item\label{itm:clique,chif}
    For any $\eps > 0$ there exists $\delta_0$ such that there is a fractional colouring of~$G$ such that each~$u\in V(G)$ is coloured with a subset of the smaller of the following two intervals:
    \begin{align}
    &\interval[co,scaled]{0}{(1+\eps)\max\left\{(\omega_u -2) \frac{\deg(u) \log\log \deg(u)}{\log \deg(u)},\,(\omega_u -2) \frac{\delta_0 \log\log \delta_0}{\log \delta_0}\right\}},
    \\
    &\interval[co,scaled]{0}{(1+\eps)\max\left\{2\deg(u)\sqrt{\frac{\log(\omega_u-1)}{\log \deg(u)}},\, 2\delta_0\sqrt{\frac{\log(\omega_u-1)}{\log\delta_0}}\right\}}.
    \end{align}
    In particular, the fractional chromatic number of~$G$ satisfies 
    \[
      \chi_f(G)\le(1+o(1))\min\left\{(\omega -2) \frac{\Delta \log\log \Delta}{\log \Delta} ,\, 2\Delta\sqrt{\frac{\log(\omega-1)}{\log\Delta}} \right\}
    \] 
    as $\Delta\to\infty$.
    \item\label{itm:clique,chic}
    For all $\eps>0$ there exist $\delta_0$ and~$\Delta_0$ such that the following holds for all~$\Delta\ge\Delta_0$ and~$\delta=\delta_0 k$ where
    \begin{align}
      k &= \min\left\{
          {\left(e^2\log(8\Delta^4)\right)}^{\omega-1},\,
        \exp\left(\sqrt{4\log(\omega-1)\cdot(1+\eps)\log(8\Delta^4)}\right)
      \right\}.
    \end{align}
    If $\sH=(L,H)$ is a cover of $G$ such that for each $u\in V(G)$, the size of~$L(u)$ is at least the smaller of the two expressions below,
    \begin{align}
      &(1+\eps)\max\left\{(\omega_u -2) \frac{\deg(u) \log\log(\deg(u)/k)}{\log(\deg(u)/k)},\,(\omega_u -2) \frac{\delta \log\log (\delta/k)}{\log(\delta/k)}\right\},
      \\
      &(1+\eps)\max\left\{2\deg(u)\sqrt{\frac{\log(\omega_u-1)}{\log (\deg(u)/k)}},\, 2\delta\sqrt{\frac{\log(\omega_u-1)}{\log(\delta/k)}}\right\},
    \end{align}
    then $G$ is $\sH$-colourable.
    
    In particular, the correspondence chromatic number of~$G$ satisfies 
    \[
      \chi_c(G)\le(1+o(1))\min\left\{(\omega -2) \frac{\Delta \log\log \Delta}{\log \Delta} ,\, 5\Delta\sqrt{\frac{\log(\omega-1)}{\log\Delta}} \right\}
    \] 
    as $\Delta\to\infty$.
    \end{enumerate}
\end{theorem}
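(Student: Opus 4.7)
The plan is to mirror the structure of Section~\ref{sec:localhall}: establish two parallel local occupancy lemmas for $G[N(u)]$, which is $K_{\omega_u-1}$-free (since any clique of $G[N(u)]$ together with $u$ forms a larger clique of $G$), and then feed these into Theorems~\ref{thm:main,occfrac}, \ref{thm:main,chif} and~\ref{thm:main,chic,convenient}. In each regime this reduces to producing a lower bound on $\log Z_F(\lam)$ valid for any subgraph $F\subseteq G[N(u)]$, converting it via Lemma~\ref{lem:shearerhcm} into a lower bound on $\lam Z_F'(\lam)/Z_F(\lam)$, and finally solving~\eqref{eqn:main} tightly to minimise $\beta_u+\gam_u d_u$ as was done in Lemma~\ref{lem:localhall,localocc}.

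For the first bound (the factor $(\omega-2)\log\log\Delta/\log\Delta$), I would invoke Shearer's independence estimate for $K_r$-free graphs~\cite{She95}: any $K_r$-free graph of average degree $d$ contains an independent set of proportional size $(1+o(1))\log d/((r-2)d)$. Applied to $F$ with $r=\omega_u-1$ this yields $\log Z_F(\lam)\ge \alpha(F)\log(1+\lam)$, and optimising the resulting local occupancy inequality by analogy with Lemma~\ref{lem:localhall,localocc}, with $\lam$ of order $1/\log\Delta$, should deliver $\beta_u+\gam_u d_u = (1+o(1))(\omega_u-2)d_u\log\log d_u/\log d_u$ after invoking the asymptotic expansions of the Lambert $W$ function from Subsection~\ref{sub:Lambert}.

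For the second bound (the factor $\sqrt{\log(\omega-1)/\log\Delta}$), I would instead use the Ramsey-theoretic independence estimate underlying~\cite{BGG18,BKNP18+}, which outperforms Shearer's bound when $\omega$ is large relative to~$\Delta$: roughly, any $K_r$-free graph of maximum degree $d$ contains an independent set of proportional size at least $\exp(-O(\sqrt{\log(r-1)\log d}))$. The same pipeline through Lemma~\ref{lem:shearerhcm} then produces a second local occupancy pair $(\beta_u,\gam_u)$ whose sum is asymptotically $2d_u\sqrt{\log(\omega_u-1)/\log d_u}$, upon choosing $\lam$ so that $\log(1+\lam)$ is comparable to $\sqrt{\log(\omega-1)/\log\Delta}$.

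With both local occupancy analyses in hand, the three parts follow by the now-standard applications of the framework: part~\ref{itm:clique,occfrac} from Theorem~\ref{thm:main,occfrac} combined with the monotonicity of the occupancy fraction in~$\lam$; part~\ref{itm:clique,chif} from Theorem~\ref{thm:main,chif} with $d_u=\max\{\deg(u),\delta_0\}$; part~\ref{itm:clique,chic} from Theorem~\ref{thm:main,chic,convenient} with a uniform~$\ell$ chosen large enough that the lower bound on $\log Z_F(\lam)$ exceeds $\log(8\Delta^4)$ whenever $|V(F)|\ge \ell/8$, which is precisely the role of the parameter~$k$ in the statement. The main obstacle will be asymptotic bookkeeping across the two regimes: the leading constants $\omega-2$ (first bound) and~$5$ (the list/correspondence counterpart of the second bound, worse than the fractional~$2$ due to the overhead encoded in~$k$ and the minimum list size) must come out exactly, which requires a careful choice of fugacity and a delicate analysis of the resulting Lambert-$W$ expressions in each sub-regime, analogous in spirit to, but more elaborate than, the calculation in Subsection~\ref{sub:localhall,proof}.
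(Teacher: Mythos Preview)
Your outline has a genuine gap in the core local-occupancy step. You propose to bound $\log Z_F(\lam)$ via $\log Z_F(\lam)\ge \alpha(F)\log(1+\lam)$, feeding in Shearer's (or a Ramsey-type) lower bound on $\alpha(F)$. This discards crucial information and cannot yield the constants in Theorem~\ref{thm:clique}. The paper does not bound the \emph{size} of an independent set in $F$; it bounds the \emph{number} of independent sets of a prescribed size~$\alpha$ via the Erd\H{o}s--Szekeres Ramsey recurrence, obtaining $Z_F(\lam)\ge (y/R(\omega,\alpha))^\alpha\lam^\alpha$ and hence (Lemma~\ref{lem:clique,logZ}) $\log Z_F(\lam)\ge \alpha(\log(y\lam)-\log R(\omega,\alpha))$. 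Optimising over~$\alpha$ then gives, for the first regime, $z\ge (\omega-1)^2\big((y\lam)^{1/(\omega-1)}/e^2-1\big)$, which crucially inverts to $y\lam\le e^{2(\omega-1)}(z+1)^{\omega-1}$. Plugging this into Lemma~\ref{lem:shearerhcm} yields $K(y\lam/z)\le (1+o(1))(\omega-2)\log z$, hence $\lam Z_F'(\lam)/Z_F(\lam)\ge (1-o(1))z/((\omega-2)\log z)$; this is precisely what produces the factor $(\omega-2)\log\log d_u/\log d_u$ after the convex optimisation in Lemma~\ref{lem:clique,localocc}. The second regime is handled symmetrically with the other Ramsey estimate, yielding $\lam Z_F'(\lam)/Z_F(\lam)\ge (1/2-o(1))\sqrt{z/\log(\omega-1)}$.

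By contrast, your route through $\alpha(F)$ fails on both fronts. Shearer's bound for $K_r$-free graphs depends on the \emph{average degree} of $F$, which you do not control: for a dense subgraph of $G[N(u)]$ on $y$ vertices it only gives $\alpha(F)=O(\log y)$, and then $\log Z_F(\lam)$ grows logarithmically rather than as a power of $y$; the resulting bound on $\lam Z_F'(\lam)/Z_F(\lam)$ from Lemma~\ref{lem:shearerhcm} is merely $O(1)$, so you cannot take $\gam_u=o(1)$ and never reach the $\log\log d_u/\log d_u$ form. If instead you replace Shearer by Tur\'{a}n, you get $\alpha(F)\ge y/(\omega_u-1)$, i.e.\ local Hall ratio $\rho_u\le\omega_u-1$, and the analysis collapses to that of Section~\ref{sec:localhall}; you would recover Theorem~\ref{thm:localhall} with $\rho=\omega-1$, which has a different shape (constant $K(\omega-1)$ in place of $(\omega-2)\log\log\Delta$) and does not match the statement you are asked to prove. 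Also, a minor index slip: $G[N(u)]$ is $K_{\omega_u}$-free, not $K_{\omega_u-1}$-free.
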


For convenience, we restate the independence number bound that either of parts~\ref{itm:clique,occfrac} and~\ref{itm:clique,chif} above immediately yield.

\begin{corollary}\label{cor:clique,alpha}
For any graph $G$ of clique number $\omega$ and maximum degree~$\Delta$, the independence number of~$G$ satisfies as~$\Delta\to\infty$
\begin{align}
\alpha(G) \ge (1-o(1))|V(G)|\max\left\{
\frac{\log\Delta}{(\omega-2)\Delta\log \log \Delta},
\frac{1}{2\Delta}\sqrt{\frac{\log\Delta}{\log(\omega-1)}}
\right\}.
\end{align}
\end{corollary}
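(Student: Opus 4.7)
The plan is to obtain the corollary as a direct consequence of Theorem~\ref{thm:clique}, since the corollary's statement is essentially an unpacking of either of parts~\ref{itm:clique,occfrac} or~\ref{itm:clique,chif} into the language of independence number. I would present two short derivations, one from each part, noting that either one alone suffices.

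First, from part~\ref{itm:clique,occfrac}: given a graph $G$ with clique number $\omega$ and maximum degree $\Delta$, for any $\lam>0$ the hard-core model on $G$ at fugacity $\lam$ produces a random independent set $\bI$ whose expected size satisfies $\EE|\bI|=\lam Z'_G(\lam)/Z_G(\lam)$. By the probabilistic method, $\alpha(G)\ge\EE|\bI|$, so dividing through by $|V(G)|$ one obtains $\alpha(G)\ge|V(G)|\cdot\lam Z'_G(\lam)/(Z_G(\lam)|V(G)|)$. Plugging in the lower bound on the occupancy fraction from part~\ref{itm:clique,occfrac} of Theorem~\ref{thm:clique} yields precisely the stated inequality, with both terms of the maximum appearing.

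Second, from part~\ref{itm:clique,chif}: any fractional colouring of $G$ with total weight at most $k$ is (equivalently) a probability distribution over independent sets in which each vertex is occupied with probability at least $1/k$; summing over all vertices and interchanging expectation with the sum yields an independent set of expected size at least $|V(G)|/k$, hence $\alpha(G)\ge|V(G)|/\chi_f(G)$. Inserting the asymptotic upper bound on $\chi_f(G)$ from part~\ref{itm:clique,chif} gives the same two lower bounds on $\alpha(G)/|V(G)|$.

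There is no real obstacle here: the corollary is stated explicitly as a consequence of Theorem~\ref{thm:clique}, and the only routine points are (i) recording the standard inequalities $\alpha(G)\ge\EE|\bI|$ and $\alpha(G)\ge|V(G)|/\chi_f(G)$, and (ii) absorbing the $(1+o(1))$ factor in the denominator of part~\ref{itm:clique,chif} into the $(1-o(1))$ factor in the numerator of the corollary's bound. Thus the proof can be given in just a few lines, citing the relevant parts of Theorem~\ref{thm:clique} directly.
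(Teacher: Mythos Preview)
Your proposal is correct and matches the paper's approach exactly: the paper states the corollary as an immediate restatement of the independence number bound yielded by either part~\ref{itm:clique,occfrac} or part~\ref{itm:clique,chif} of Theorem~\ref{thm:clique}, invoking precisely the standard facts $\alpha(G)\ge\EE|\bI|$ and $\alpha(G)\ge|V(G)|/\chi_f(G)$ that you spell out.
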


\noindent
As such the simpler parts of our framework lead to improved (and explicit) leading asymptotic constants in the work of Shearer~\cite{She95} and of Bansal, Gupta, and Guruganesh~\cite[Thm~1.2]{BGG18}. In the former case (which is the more useful bound when~$\omega$ is fixed), this therefore constitutes the best progress towards an earlier-mentioned conjecture of Ajtai, Erd\H{o}s, Koml\'{o}s and Szemer\'{e}di~\cite{AEKS81}. We remind the reader that those authors would have had little interest in an increased leading constant, but rather in the removal of the stubborn~$\log\log\Delta$ factor, so as to more cleanly generalise Theorem~\ref{thm:shearer}.

Curiously, in our derivation of Theorem~\ref{thm:clique}, particularly in Subsection~\ref{sub:clique,hcm}, one would immediately obtain further improvement in the leading constants were they able to \emph{considerably} improve in general on the upper bounds for the Ramsey numbers $R(\omega,\alpha)$. But that of course would be an astonishing breakthrough in the field.

\subsection{The hard-core model with bounded clique number}\label{sub:clique,hcm}

Here is a mild generalisation of~\cite[Lem.~4.4]{BKNP18+}, which is a
consequence of the Erd\H{o}s--Szekeres recurrence for Ramsey
numbers~\cite{ErSz35}.

\begin{lemma}\label{lem:clique,logZ}
For any graph~$F$ on~$y$ vertices that contains no clique of size~$\omega$, any positive real~$\lam$, and any positive integer~$\alpha$,
\begin{align}\label{eqn:logZ,smallomega}
\log Z_F(\lam) & \ge \alpha\left(\log(y\lam)-(\omega-1)\log\left(e\cdot\frac{\alpha-1}{\omega-1}+e\right)\right),\quad \text{and} \\
\log Z_F(\lam) & \ge \alpha\left(\log(y\lam)-(\alpha-1)\log\left(e\cdot\frac{\omega-1}{\alpha-1}+e\right)\right).\label{eqn:logZ,largeomega}
\end{align}
\end{lemma}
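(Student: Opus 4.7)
\emph{Plan.} My proof plan relies on three ingredients: the Erd\H{o}s--Szekeres bound on off-diagonal Ramsey numbers, a standard double-counting to extract many independent $\alpha$-sets of $F$, and the elementary estimate $\binom{n}{k}\le (en/k)^k$ applied in two symmetric ways.

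First I would set $r:=\binom{\omega+\alpha-2}{\omega-1}$, so that the Erd\H{o}s--Szekeres recurrence gives $R(\omega,\alpha)\le r$. Since $F$ is $K_\omega$-free, every $r$-subset of $V(F)$ must contain an independent set of size $\alpha$. A standard double-counting over incident pairs $(I,S)$ in which $I$ is an independent $\alpha$-set of $F$, $S$ is an $r$-subset of $V(F)$, and $I\subseteq S$, then shows that the number of independent $\alpha$-sets in $F$ is at least $\binom{y}{r}/\binom{y-\alpha}{r-\alpha}=\binom{y}{\alpha}/\binom{r}{\alpha}$. Applying the elementary inequality $\binom{y}{\alpha}/\binom{r}{\alpha}\ge(y/r)^\alpha$ (valid when $y\ge r$) I would then derive
\[
    Z_F(\lam)\ \ge\ \lam^\alpha\binom{y}{\alpha}/\binom{r}{\alpha}\ \ge\ (y\lam/r)^\alpha,
\]
whence $\log Z_F(\lam)\ge \alpha(\log(y\lam)-\log r)$. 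In the boundary regime $y<r$, the announced right-hand sides are typically non-positive and dominated by the trivial $\log Z_F(\lam)\ge 0$; a separate direct check (using e.g.\ $Z_F(\lam)\ge(1+\lam)^{\alpha(F)}$) should cover any remaining cases.

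To finish, I would bound $\log r$ in two complementary ways using $\binom{n}{k}\le (en/k)^k$. Taking $(n,k)=(\omega+\alpha-2,\omega-1)$ and rewriting $e(\omega+\alpha-2)/(\omega-1)=e(\alpha-1)/(\omega-1)+e$ delivers~\eqref{eqn:logZ,smallomega}; taking $(n,k)=(\omega+\alpha-2,\alpha-1)$ does the symmetric computation to give~\eqref{eqn:logZ,largeomega}. The main technical nuisance I anticipate is verifying the boundary case $y<r$ cleanly so that the two claimed forms are valid throughout the full parameter range; everything else is a direct assembly of standard tools.
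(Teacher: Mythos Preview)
Your proposal is correct and follows essentially the same route as the paper: Erd\H{o}s--Szekeres for the Ramsey bound, the same double-counting of pairs $(I,S)$ to get at least $(y/r)^\alpha$ independent $\alpha$-sets, the bound $Z_F(\lam)\ge \lam^\alpha\cdot|\{I:|I|=\alpha\}|$, and the two symmetric applications of $\binom{n}{k}\le(en/k)^k$. The only cosmetic difference is that the paper carries the actual Ramsey number $R=R(\omega,\alpha)$ through the counting and invokes $R\le\binom{\alpha+\omega-2}{\omega-1}$ at the end, whereas you substitute the Erd\H{o}s--Szekeres bound $r$ from the start; both land at the same inequality. The paper, like you, brushes past the boundary case $y<R$ with a parenthetical remark, so your treatment of that regime is at the same level of rigor.
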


\begin{proof}
Let $R$ be the Ramsey number $R(\omega,\alpha)$. By the assumption on~$F$, every subset of~$V(F)$ of size~$R$ has an independent set of size $\alpha$. Note that every independent set of size $\alpha$ is contained in at most $\binom{y-\alpha}{R-\alpha}$ subsets of~$V(F)$ of size~$R$. Thus there are at least
\begin{align}
    \binom{y}{R}\left/\binom{y-\alpha}{R-\alpha}\right. \ge{\left(\frac{y}{R}\right)}^{\alpha} 
= \exp(\alpha(\log y-\log R))
\end{align}
independent sets of size $\alpha$ (assuming that $y \ge R$). By the result of Erd\H{o}s and
    Szekeres~\cite{ErSz35}, we have $R\le
    \binom{\alpha+\omega-2}{\omega-1}=\binom{\alpha+\omega-2}{\alpha-1}$. A
    standard estimate on the binomial coefficients implies both of the
    following upper bounds:
\begin{align}
    \log R &\le \log {\left(e\cdot\frac{\alpha+\omega-2}{\omega-1}\right)}^{\omega-1}
=  (\omega-1) \log \left(e\cdot\frac{\alpha-1}{\omega-1}+e\right), \quad \text{ and}\\
    \log R &\le \log {\left(e\cdot\frac{\alpha+\omega-2}{\alpha-1}\right)}^{\alpha-1} 
=  (\alpha-1) \log \left(e\cdot\frac{\omega-1}{\alpha-1}+e\right).
\end{align}
The announced inequalities then follow from the fact that
\begin{align}
Z_F(\lam) \ge |\{I\in \cI(F) \mid |I|=\alpha\}| \cdot \lam^\alpha
\end{align}
together with the monotonicity of $\log$.
\end{proof}

\begin{lemma}\label{lem:clique,hcm}
For any graph~$F$ on~$y$ vertices that contains no clique of size~$\omega$ and any positive real~$\lam$, the following holds, where we write $z=\log Z_F(\lam)$.
\begin{enumerate}
\item\label{itm:smallclique,hcm}
We have
\[
    z \ge {(\omega-1)}^2\left(\frac{{(y\lam)}^{1/(\omega-1)}}{e^2}-1\right),
\]
and as $y\lam\to\infty$ we have
\[
\frac{\lam Z_F'(\lam)}{Z_F(\lam)} \ge \frac{1-o(1)}{\omega-2}\frac{z}{\log z}.
\]
\item\label{itm:largeclique,hcm}
    Supposing that $\omega\to\infty$ satisfies $\lam = {(\omega-1)}^{o(1)}$  as~$y\lam\to\infty$, we have 
\begin{align*}
z 
    & \ge \left(\frac{1}{4}-o(1)\right)\frac{{(\log (y\lam))}^2}{\log(\omega-1)},\quad \text{and} \\
\frac{\lam Z_F'(\lam)}{Z_F(\lam)} 
& \ge \left(\frac12-o(1)\right)\sqrt{\frac{z}{\log(\omega-1)}}.
\end{align*}
\end{enumerate}
\end{lemma}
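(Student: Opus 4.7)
The plan is to combine Lemma~\ref{lem:clique,logZ}, which provides two Ramsey-type lower bounds on $z\coloneqq\log Z_F(\lam)$ in terms of a free integer parameter $\alpha$, with the entropy-based occupancy bound $\lam Z_F'(\lam)/Z_F(\lam) \ge z/\fn(y\lam/z)$ of Lemma~\ref{lem:shearerhcm}. Part~\ref{itm:smallclique,hcm} (the better bound when $\omega$ is small) will use~\eqref{eqn:logZ,smallomega}, while part~\ref{itm:largeclique,hcm} (the better bound when $\omega$ is large) will use~\eqref{eqn:logZ,largeomega}. In each case I would first optimise the choice of $\alpha$ to get the lower bound on $z$, then invert this to control $\log(y\lam/z)$, and finally substitute into Shearer's inequality using the asymptotic $\fn(x) = (1+o(1))\log x$ as $x\to\infty$.

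For part~\ref{itm:smallclique,hcm}, I would take $\alpha$ to be the integer part of $(\omega-1)\bigl((y\lam)^{1/(\omega-1)}/e^2 - 1\bigr) + 1$, which is tailored so that $e(\alpha-1)/(\omega-1) + e$ equals roughly $(y\lam)^{1/(\omega-1)}/e$. With this choice the bracket in~\eqref{eqn:logZ,smallomega} collapses to $\omega-1$, producing the stated lower bound on~$z$. Inverting gives $\log(y\lam/z) \le (\omega-2)(1+o(1))\log z$ as $y\lam\to\infty$ (with $\omega$ treated as fixed for the $o(1)$), so $\fn(y\lam/z) \le (\omega-2)(1+o(1))\log z$ and the claimed occupancy bound follows directly from Lemma~\ref{lem:shearerhcm}.

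For part~\ref{itm:largeclique,hcm}, I would use~\eqref{eqn:logZ,largeomega} with $\alpha$ set to the integer nearest $\log(y\lam)/(2\log(\omega-1))$. Under the hypotheses that $\omega\to\infty$ and $\lam = (\omega-1)^{o(1)}$, I expect to argue $\alpha - 1 = o(\omega-1)$ and $\log(\alpha-1) = o(\log(\omega-1))$, so $\log(e(\omega-1)/(\alpha-1)+e) = (1+o(1))\log(\omega-1)$, and hence $(\alpha-1)\log(e(\omega-1)/(\alpha-1)+e) = (1/2+o(1))\log(y\lam)$. The bracket in~\eqref{eqn:logZ,largeomega} becomes $(1/2-o(1))\log(y\lam)$, giving $z \ge (1/4-o(1))(\log(y\lam))^2/\log(\omega-1)$. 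Inverting yields $\log(y\lam) \le (2+o(1))\sqrt{z\log(\omega-1)}$, and feeding this via $\fn(y\lam/z) \le (1+o(1))\log(y\lam)$ into Shearer's bound produces the announced $(1/2-o(1))\sqrt{z/\log(\omega-1)}$.

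The main obstacle will be showing that these asymptotic simplifications for part~\ref{itm:largeclique,hcm} hold uniformly under the stated hypotheses, in particular $\alpha - 1 = o(\omega-1)$ and $\log(\alpha-1) = o(\log(\omega-1))$. The hypothesis $\lam = (\omega-1)^{o(1)}$ controls the contribution of $\log\lam$ to $\log(y\lam)$ and thence to $\log(\alpha-1)$, but a careful case split (or a fallback to part~\ref{itm:smallclique,hcm}) may be needed in the regime where $\log(y\lam)$ is large compared with $(\omega-1)\log(\omega-1)$, i.e.~where the two bounds on $z$ cross over and the optimal $\alpha$ for~\eqref{eqn:logZ,largeomega} would exceed $\omega$.
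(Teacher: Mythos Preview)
Your plan is essentially the paper's own proof: same two lemmas, same choices of~$\alpha$ (up to the harmless replacement of $\log(\omega-1)$ by $\log((\omega-1)\lam)$ in the denominator for part~\ref{itm:largeclique,hcm}), and the same use of Lemma~\ref{lem:shearerhcm} via $\fn(x)\sim\log x$.

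The one place you diverge is in bounding $(\alpha-1)\log\bigl(e(\omega-1)/(\alpha-1)+e\bigr)$ in part~\ref{itm:largeclique,hcm}. You propose to argue $\alpha-1=o(\omega-1)$ and $\log(\alpha-1)=o(\log(\omega-1))$, and you correctly flag that this fails when $\log(y\lam)$ is large relative to $(\omega-1)\log(\omega-1)$, forcing a case split. The paper avoids this entirely: it uses that $x\mapsto x\log(e(\omega-1)/x+e)$ is increasing (in the relevant range) to replace $\alpha-1$ by its upper bound $\tfrac{\log(y\lam)}{2\log((\omega-1)\lam)}$, and then simply drops the factor $\log((\omega-1)\lam)/\log(y\lam)\le 1$ inside the logarithm (valid since $\omega\le y$). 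This yields $(\alpha-1)\log(e(\omega-1)/(\alpha-1)+e)\le \tfrac{\log(y\lam)}{2\log((\omega-1)\lam)}\log(2e(\omega-1)+e)$ directly, with no regime distinction and no need for $\alpha-1=o(\omega-1)$. Adopting this monotonicity trick removes the obstacle you identified and makes the argument uniform.
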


\begin{proof}
For~\ref{itm:smallclique,hcm} we apply~\eqref{eqn:logZ,smallomega} with 
\[
    \alpha -1 = \left\lfloor \frac{\omega-1}{e^2}{(y\lam)}^{1/(\omega-1)} - (\omega-1) \right\rfloor,
\]
which can be assumed to be nonnegative as otherwise the desired lower bound on~$z$ is trivially true.
    This implies that
\[
(\omega-1)\log\left(e\cdot\frac{\alpha-1}{\omega-1}+e\right) \le \log(y\lam) - (\omega-1),
\]
and hence
\[
    z \ge (\omega-1)\alpha \ge {(\omega-1)}^2\left(\frac{{(y\lam)}^{1/(\omega-1)}}{e^2}-1\right),
\]
as desired.
Rearranging this gives an upper bound on~$y\lam$:
\[
    y\lam \le e^{2(\omega-1)}{\left(\frac{z}{{(\omega-1)}^2}+1\right)}^{\omega-1} \le  e^{2(\omega-1)}{(z+1)}^{\omega-1}.
\]

For the lower bound on $\lam Z_F'(\lam)/Z_F(\lam)$ we apply Lemma~\ref{lem:shearerhcm}, which means we want to bound the term~$K(y\lam / z)$. Since~$K$ is increasing we may use the upper bound on~$y$ we just established, and we also use the facts that as~$y\lam\to\infty$ we have $z\to\infty$, and that $K(x)\sim\log(x)$ as~$x\to\infty$. 
Then 
\begin{align}
    K(y\lam / z) \le K\left(e^{2(\omega-1)}\frac{{(z+1)}^{\omega-1}}{z}\right)
= (1+o(1))(\omega-2)\log z ,
\end{align}
so by Lemma~\ref{lem:shearerhcm} we have, as $y\lam\to\infty$, 
\[
\frac{ \lam Z_F'(\lam)}{Z_F(\lam)} \ge \frac{1-o(1)}{\omega-2}\frac{z}{\log z},
\]
as required. 

    For~\ref{itm:largeclique,hcm}, note that we can assume $\omega \le y$ since no graph on~$y$ vertices can contain a clique on more than~$y$ vertices. 
Then set
\[
\alpha -1 = \left\lfloor  \frac{\log(y\lam)}{2\log((\omega-1)\lam)}\right\rfloor,
\]
which is nonnegative because~$(\omega-1)\lam>1$ by assumption.
From this choice we have 
\begin{align}
(\alpha-1)\log&\left(e\cdot\frac{\omega-1}{\alpha-1}+e\right) 
\\&\le \frac{\log(y\lam)}{2\log((\omega-1)\lam)}\log\left(2e(\omega-1)\frac{\log((\omega-1)\lam)}{\log(y\lam)} + e\right)
\\&\le \frac{\log(y\lam)}{2\log((\omega-1)\lam)}\log\left(2e(\omega-1) + e\right).
\end{align}
    As $\omega\to\infty$, and using the assumption that $\lam = {(\omega-1)}^{o(1)}$, we have
\[
(\alpha-1)\log\left(e\cdot\frac{\omega-1}{\alpha-1}+e\right) \le \left(\frac{1}{2}+o(1)\right)\log(y\lam).
\]
    Then by~\eqref{eqn:logZ,largeomega} and again using the assumption that $\lam = {(\omega-1)}^{o(1)}$, we have
\[
    z \ge \alpha \left(\frac{1}{2}-o(1)\right) \log (y\lam) \ge \left(\frac{1}{4}-o(1)\right)\frac{{(\log (y\lam))}^2}{\log(\omega-1)}, 
\]
and an upper bound on $\log(y\lam)$ follows: as $y\lam\to\infty$
\[
\log(y\lam) \le (2+o(1))\sqrt{z\log(\omega-1)}.
\]

Like before we bound~$K(y\lam/z)$ from above and apply Lemma~\ref{lem:shearerhcm} to obtain the lower bound on $\lam Z_F'(\lam)/Z_F(\lam)$. 
As $y\lam\to\infty$ and hence~$z\to\infty$, we deduce from the properties of~$K$ and the above bound on~$y\lam$ that
\[
K(y\lam /z) \le (2+o(1))\sqrt{z\log(\omega-1)},
\]
which gives the result. 
\end{proof}

\subsection{Local occupancy with bounded clique number}\label{sub:clique,localocc}

The next result establishes the local occupancy required to prove Theorem~\ref{thm:clique}, which involves details for the optimisation tasks necessary to apply Lemma~\ref{lem:clique,hcm} and our framework. 
It is the main optimisation needed for the leading constants in Corollary~\ref{cor:clique,alpha}.
It is worth remarking here that Lemma~\ref{lem:clique,hcm} already yields via~\cite[Thm~1.13]{BKNP18+} the conclusion of Theorem~\ref{thm:clique,chi} but with worse leading asymptotic constants.
We would not hold it against a reader uninterested in particular constants who might be tempted to skip over much of the remainder of this section. 

\begin{lemma}\label{lem:clique,localocc}
For any $\lam,\xi>0$, there exists~$d_0>0$ such that the following holds.
For any graph~$G$ in which the maximum size of a clique containing a vertex~$u\in V(G)$ is at most~$\omega_u \ge 3$, 
    and any collection ${(d_u)}_{u\in V(G)}$ of positive reals satisfying $d_u\ge d_0$ for all~$u$,
there is a choice of parameters that satisfies
\[
    \beta_u + \gam_ud_u \le {(1+\xi)}^2\min\left\{(\omega_u -2) \frac{d_u\log\log d_u}{\log d_u},\, 2d_u\sqrt{\frac{\log(\omega_u-1)}{\log d_u}} \right\}
\]
for all $u\in V(G)$
    and strong local ${(\beta_u,\gam_u)}_u$-occupancy in the hard-core model on~$G$ at fugacity~$\lam$.
\end{lemma}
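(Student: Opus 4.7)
The plan is to reduce the verification of strong local occupancy to a single-variable optimization. Fix $u\in V(G)$ and note that every subgraph $F$ of $G[N(u)]$ contains no clique of size $\omega_u$, since otherwise such a clique together with $u$ would give a clique of size $\omega_u+1$ through $u$. Writing $z = \log Z_F(\lam)$, Lemma~\ref{lem:clique,hcm} furnishes two admissible lower bounds of the form $\lam Z_F'(\lam)/Z_F(\lam) \ge h(z)$: the ``small-$\omega_u$'' bound $h_1(z) = \frac{1-o(1)}{\omega_u-2}\cdot\frac{z}{\log z}$ from~\ref{itm:smallclique,hcm} and the ``large-$\omega_u$'' bound $h_2(z) = (\tfrac12-o(1))\sqrt{z/\log(\omega_u-1)}$ from~\ref{itm:largeclique,hcm}. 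Thus to secure strong local $(\beta_u,\gam_u)$-occupancy it suffices to find $\beta_u,\gam_u>0$ with
\[
g(z)\coloneqq\beta_u\frac{\lam}{1+\lam}e^{-z}+\gam_u h(z)\ge 1
\]
for every admissible $z\ge 0$ and our choice of $h\in\{h_1,h_2\}$.

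Treating $g$ as a real function, its stationary point $z^*$ satisfies $\beta_u\tfrac{\lam}{1+\lam}e^{-z^*}=\gam_u h'(z^*)$, and at this minimum I would impose $g(z^*)=1$. This yields
\[
\gam_u=\frac{1}{h(z^*)+h'(z^*)},\ \ \beta_u=\frac{1+\lam}{\lam}\gam_u h'(z^*)e^{z^*},\ \ \beta_u+\gam_u d_u=\frac{\frac{1+\lam}{\lam}h'(z^*)e^{z^*}+d_u}{h(z^*)+h'(z^*)}.
\]
The near-optimal choice is $z^*\approx\log d_u$: then $e^{z^*}\approx d_u$, $h'(z^*)\ll h(z^*)$, and the expression reduces to $d_u/h(\log d_u)$ up to lower-order terms. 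Substituting $h_1$ yields $\beta_u+\gam_u d_u\le(1+o(1))(\omega_u-2)d_u\log\log d_u/\log d_u$, and substituting $h_2$ yields $\beta_u+\gam_u d_u\le(1+o(1))\cdot 2d_u\sqrt{\log(\omega_u-1)/\log d_u}$. For each $u$ I would select the parameter choice that produces the smaller of the two, yielding the claimed minimum.

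The main obstacle is verifying $g(z)\ge 1$ across the whole admissible range, not merely at $z^*$, and absorbing every error into the factor $(1+\xi)^2$. The small-$z$ regime, where the asymptotic form of Lemma~\ref{lem:clique,hcm} breaks down, is handled by observing that $\beta_u\tfrac{\lam}{1+\lam}e^{-z}\ge 1$ already at $z=0$, since our choice of $\beta_u$, of order $d_u/\log d_u$, comfortably exceeds $(1+\lam)/\lam$ once $d_u\ge d_0$; a monotonicity argument then extends this to the whole initial segment on which the asymptotic bound is unreliable. For moderate and large $z$ I would quantify the $o(1)$ terms using the explicit (non-asymptotic) inequalities of Lemma~\ref{lem:clique,hcm}, in particular $z\ge(\omega_u-1)^2((y\lam)^{1/(\omega_u-1)}/e^2-1)$, making every error a concrete function of $d_0$, $\lam$, and $\xi$. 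One factor of $(1+\xi)$ absorbs this error while the other accommodates the subdominant $\lam$-dependent contribution $\beta_u/d_u=O(1/\log d_u)$ picked up during the optimisation. Since $F$ being a not-necessarily-induced subgraph of $G[N(u)]$ preserves the clique-number bound, the identical argument delivers strong local occupancy.
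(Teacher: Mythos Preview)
Your proposal is correct and follows essentially the same approach as the paper: reduce to a one-variable minimisation in $z=\log Z_F(\lam)$ using the two bounds of Lemma~\ref{lem:clique,hcm}, locate the stationary point $z^*\sim\log d_u$, solve for $(\beta_u,\gam_u)$ there, and handle the small-$z$ regime via the divergence of $\beta_u$ as $d_u\to\infty$ (the paper phrases this as a threshold $y_0$ on $|V(F)|$, and introduces an auxiliary slack parameter $\zeta$ absorbed into $\xi$, but these are cosmetic differences). One minor imprecision: your claim that $\beta_u$ is ``of order $d_u/\log d_u$'' overstates it---the actual order is $d_u(\log\log d_u)/(\log d_u)^2$ for $h_1$ and $d_u/(\log d_u)^{3/2}$ for $h_2$---but since all you need is $\beta_u\to\infty$, this does not affect the argument.
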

  
  \begin{proof}
      Let $\zeta\in\interval[oo]{0}{1}$ be a constant to be specified later. 
  Fix an arbitrary vertex~$u\in V(G)$, and let~$F$ be any subgraph of~$G[N(u)]$ on~$y$ vertices. 
  As before, we write $z=\log Z_F(\lam)$ for brevity.

  For strong local occupancy we must give a choice of~$\beta_u$ and~$\gam_u$ that is independent of~$F$ and such that
  \begin{equation}\label{eq:clique:localocc}
  \beta_u \frac{\lam}{1+\lam}e^{-z} + \gam_u \frac{\lam Z_F'(\lam)}{Z_F(\lam)} \ge 1.
  \end{equation}
  Writing 
  \begin{align}
  g_1(z) &= \beta_u \frac{\lam}{1+\lam} e^{-z} + \gam_u\; \frac{1-\zeta}{\omega_u-2}\;\frac{z}{\log z},\\
  g_2(z) &= \beta_u \frac{\lam}{1+\lam} e^{-z} + \gam_u \frac{1-\zeta}{2\sqrt{\log(\omega_u-1)}}\sqrt z,
  \end{align}
  and $y_0 = y_0(\zeta,\lam)$ for some large enough constant, 
  it suffices to find~$\beta_u$ and~$\gam_u$ such that for all~$y> y_0$ we have
  \begin{align}\label{eq:clique:betasmall}
  \min\left\{g_1(z),\,g_2(z)\right\}\ge 1,
  \end{align}
  and such that
  \begin{align}\label{eq:clique:betalarge}
      \beta_u \ge \frac{1+\lam}{\lam}{(1+\lam)}^{y_0}.
  \end{align}
      Indeed, on the one hand,~\eqref{eq:clique:betalarge} implies~\eqref{eq:clique:localocc} in the case $y\le y_0$,  since $e^z \le {(1+\lam)}^{\deg(u)} \le {(1+\lam)}^{y_0}$ in this case. On the other hand,~\eqref{eq:clique:betasmall} and Lemma~\ref{lem:clique,hcm} together yield~\eqref{eq:clique:localocc} if $y> y_0$, for $y_0$ large enough in terms of~$\lam$ and~$\zeta$. 

      Note that if $y_0<y\le\deg(u)$ then $1+y_0\lam \le e^z\le {(1+\lam)}^{\deg(u)}$. And so to establish~\eqref{eq:clique:betasmall} we need to investigate the minimum values of~$g_1$ and~$g_2$ on the interval $I=\interval[cc]{\log(1+y_0\lam)}{\deg(u)\log(1+\lam)}$.
  
  Let $z^*\ge e$ be given by the equation
  \begin{align}\label{eq:z1star}
    d_u \frac{\lam}{1+\lam} e^{-z^*} = \frac{1-\zeta}{\omega_u-2} \frac{z^*}{\log z^*}.
  \end{align}
      We may ensure that there is a unique solution in~$\interval[co]{e}{\infty}$ because 
  the right-hand side is an increasing function of~$z^*$ when~$z^*\ge e$,
  the left-hand side is a decreasing function of~$z^*$, 
  and at~$z^*=e$ the left-hand side is greater than the right-hand side provided~$d_0$ is chosen large enough in terms of~$\lam$ and~$\zeta$. 
  Writing 
  \[
    \tau_u = d_u\frac{\lam}{1+\lam}\frac{\omega_u-2}{1-\zeta},
  \]
  an asymptotic analysis of~\eqref{eq:z1star} gives
  \[
  z^* = \log \tau_u - \log\log \tau_u + \log\log\log \tau_u + o(1),
  \]
  as $d_u$, and hence $\tau_u$, tends to infinity. 
  
  Let $\beta_u$ and~$\gam_u$ be given by finding the values of~$\beta_u$ and~$\gam_u$ that solve the equations $g_1'(z^*)=0$ and $g_1(z^*)=1$, and then multiplying each by the factor $1/(1-\zeta)> 1$, so as to ensure~\eqref{eq:clique:betasmall}.
  Some elementary calculus checks that this gives
  \begin{align}
      \beta_u+\gamma_ud_u&=\frac{1}{1-\zeta}\frac{1+\lambda}{\lambda}e^{z^*},\\
      \beta_u &= \frac{d_u(\omega_u -2)}{{(1-\zeta)}^2} \frac{\log z^*(\log z^* -1)}{z^*((1+z^*)\log z^*-1)}\quad\text{and}\\
      \gam_u  &= \frac{\omega_u -2}{{(1-\zeta)}^2} \frac{{(\log z^*)}^2}{(1+z^*)\log z^*-1}.
  \end{align}
  As $d_u$ (and hence $z^*$) tends to infinity, we have
  \begin{align}
      \beta_u &\sim\frac{d_u(\omega_u -2)}{{(1-\zeta)}^2} \frac{\log z^*}{{(z^*)}^2} \sim \frac{d_u(\omega_u -2)}{{(1-\zeta)}^2} \frac{\log\log(d_u(\omega_u-2))}{{\big(\log(d_u(\omega_u-2))\big)}^2}\quad\text{and}\\
      \gam_u  &\sim\frac{\omega_u -2}{{(1-\zeta)}^2} \frac{\log z^*}{z^*} \sim \frac{\omega_u -2}{{(1-\zeta)}^2}\frac{\log\log(d_u(\omega_u-2))}{\log(d_u(\omega_u-2))},
  \end{align}
  and thus it follows that
  \[
      \beta_u + \gam_u d_u \sim \frac{d_u(\omega_u -2)}{{(1-\zeta)}^2} \frac{\log\log (d_u(\omega_u-2))}{\log (d_u(\omega_u-2))}.
  \]
  Since $\beta_u\to\infty$ as $d_u\to\infty$, for $d_0$ large enough in terms of $\lam$ and $\zeta$ (and hence $y_0$) we will have~\eqref{eq:clique:betalarge}.
  Furthermore, choosing~$\zeta$ small enough in terms of~$\xi$ we have, for large enough~$d_0$,
  \[
    \beta_u + \gam_ud_u \le (1+\xi)d_u(\omega_u -2) \frac{\log\log (d_u(\omega_u-2))}{\log (d_u(\omega_u-2))}.
  \]
  We must now justify that the minimum of $g_1(z)$ for~$z\in I$ is attained at the stationary point~$z=z^*$ by considering the endpoints of~$I$. 
  For~$d_0$ large enough in terms of~$\lam$ and~$\xi$ (and hence~$y_0$) we have 
  \[
    g_1(\log(1+y_0\lam)) > \beta_u\frac{\lam}{1+\lam}\frac{1}{1+y_0\lam} > 1.
  \]
  We ignore the case that $\deg(u)\log(1+\lam) \le z^*$ as it is always valid to enlarge~$I$ to include~$z^*$ and show that $g_1(z)\ge 1$ on the larger interval. 
  This means we only need to check the right endpoint if $\deg(u)\log(1+\lam) > z^*$. In this case, we have
  \begin{align}
    g_1(\deg(u)\log(1+\lam)) & > \gam_u\frac{1-\zeta}{\omega_u-2}\frac{\deg(u)\log(1+\lam)}{\log (\deg(u)\log(1+\lam))} \\
    & > \gam_u\frac{1-\zeta}{\omega_u-2}\frac{z^*}{\log z^*} \sim \frac{1}{1-\zeta}
  \end{align}
 as $d_u\to\infty$, and so when $d_0$ is large enough
 $g_1(\deg(u)\log(1+\lam))>1$.
  
  For $g_2$ we give a similar argument, and we redefine~$z^*$, $\beta_u$ and~$\gam_u$ rather than introduce additional notation. 
  Let~$z^*\ge0$ be given by the equation
  \[
    d_u \frac{\lam}{1+\lam}e^{-z^*} = \frac{1-\zeta}{2\sqrt{\log(\omega_u-1)}}\sqrt{z^*}.
  \]
There is a unique solution because the left-hand side is decreasing in~$z^*$ and positive at~$z^*=0$, and the right-hand side is increasing in~$z^*$ and zero at~$z^*=0$.
  The solution satisfies
  \[
      z^* = \frac{1}{2}W\left(\frac{8d_u^2\lam^2\log(\omega_u-1)}{{(1-\zeta)}^2{(1+\lam)}^2}\right)
    \sim \log d_u
  \]
  as $d_u\to\infty$, where we used the fact that $\omega_u\le d_u$.
  
  Let $\beta_u$ and $\gam_u$ be given by finding the values of $\beta_u$ and $\gam_u$ that solve the equations $g_2'(z^*)=0$ and $g_2(z^*)=1$, and then multiplying each by the factor $1/(1-\zeta)> 1$, so as to ensure~\eqref{eq:clique:betasmall}.
  Then as $d_u\to\infty$ we have
  \begin{align}
      &\beta_u \sim \frac{d_u}{{(1-\zeta)}^2}\sqrt{\frac{\log(\omega_u-1)}{{(\log d_u)}^3}},\quad
      \gam_u \sim \frac{2}{{(1-\zeta)}^2}\sqrt{\frac{\log(\omega_u-1)}{\log d_u}},\quad\text{and}\\
    &\beta_u + \gam_u d_u
      \sim \frac{2d_u}{{(1-\zeta)}^2} \sqrt{\frac{\log(\omega_u-1)}{\log d_u}}.
  \end{align}
  Choosing $\zeta$ small enough in terms of $\xi$, and for a large enough constant~$d_0$ in terms of~$\lam$ and~$\xi$, we have~\eqref{eq:clique:betalarge} and
  \[
    \beta_u+\gam_ud_u \le (1+\xi)2d_u\sqrt{\frac{\log(\omega_u-1)}{\log d_u}}.
  \]
  To check that the minimum of $g_2(z)$ in $I$ is attained at the stationary point~$z^*$, we notice that for the left endpoint of~$I$ we have
  \[
    g_2(\log(1+y_0\lam)) > \beta_u\frac{\lam}{1+\lam}\frac{1}{1+y_0\lam} > 1
  \]
  for $d_0$ large enough.
  As before, we only need to check the right endpoint of $I$ if $\deg(u)\log(1+\lam) > z^*$, and in this case we have
  \[
    g_2(\deg(u)\log(1+\lam)) > \gam_u \frac{1-\zeta}{2\sqrt{\log(\omega_u-1)}}\sqrt{z^*} \sim \frac{1}{1-\zeta}
  \]
 as $d_u\to\infty$, and so when $d_0$ is large enough $g_2(\deg(u)\log(1+\lam)) > 1$.
  
  The final step is to obtain the simplified asymptotic forms given in the lemma. 
  We have two choices of $(\beta_u,\gam_u)$ for each $u$ that satisfy strong local ${(\beta_u,\gam_u)}_u$-occupancy, and choosing the best one results in
  \begin{align}
  \beta_u\negthinspace +\negthinspace \gam_ud_u\negthinspace \le\negthinspace (1+\xi)\min\left\{\negthickspace
    (\omega_u -2) \frac{d_u \log\log (d_u(\omega_u-2))}{\log (d_u(\omega_u-2))},
    2d_u\sqrt{\frac{\log(\omega_u-1)}{\log d_u}}
  \right\}.
  \end{align}
  Note that unless $\omega_u \to\infty$ as $d_u\to\infty$ the first bound achieves the minimum for all large enough~$d_u$. 
  When~$\omega_u$ is small enough that $\log \omega_u = o(\log d_u)$ the first bound simplifies as~$d_u\to \infty$, and if $\omega_u$ is larger than this then the second bound achieves the minimum anyway. 
  This means that by increasing~$d_0$ if necessary, we can ensure that
  \[
      \beta_u + \gam_ud_u \le {(1+\xi)}^2\min\Bigg\{(\omega_u -2) \frac{d_u \log\log d_u}{\log d_u}, 2d_u\sqrt{\frac{\log(\omega_u-1)}{\log d_u}} \Bigg\}. \qedhere
  \]
  \end{proof}

\subsection{Proof of Theorem~\ref{thm:clique}}\label{sub:clique,proof}

\begin{proof}
For~\ref{itm:clique,occfrac}, note that for fixed~$\lam$ we may apply Lemma~\ref{lem:clique,localocc} with $d_u = \Delta$ for each~$u$, and $\xi$ an arbitrary positive constant. 
Since~$\xi$ is arbitrary we obtain the desired asymptotic form as $\Delta\to\infty$. 
Applying Theorem~\ref{thm:main,occfrac} and recalling that occupancy fraction is monotone increasing in~$\lam$ yields~\ref{itm:clique,occfrac}.

    Similarly,~\ref{itm:clique,chif} arises from Theorem~\ref{thm:main,chif} and an application of Lemma~\ref{lem:clique,localocc} with an arbitrary fixed~$\lam>0$, a choice of~$\xi>0$ such that ${(1+\xi)}^2 < 1+\eps$, and $d_u = \max\{\deg(u), \delta\}$ for each~$u$, where~$\delta$ is equal to the~$d_0$ provided by the lemma. 

    The remainder of the proof is devoted to proving~\ref{itm:clique,chic}. To this end, we fix an arbitrary~$\lam >0$ and~$\xi>0$ such that ${(1+\xi)}^3 < 1+\eps$. 
In order to apply Theorem~\ref{thm:main,chic,convenient}, we want a lower bound on~$Z_F(\lam)$ for any graph~$F\subset G[N(u)]$ on~$y$ vertices. 
We have the assumption that~$G[N(u)]$, and hence~$F$, contains no clique of size~$\omega_u$. 
Lemma~\ref{lem:clique,hcm} gives the required information, implying that as $y\to\infty$ we have
\begin{equation}\label{eq:clique,Zlb}
\log Z_F(\lam) \ge (1-o(1))\max\left\{
    {(\omega_u-1)}^2\frac{{(y\lam)}^{\frac{1}{\omega_u-1}}}{e^2},\, 
    \frac{{(\log y)}^2}{4\log(\omega_u-1)}\right\},
\end{equation}
because if $\omega_u$ is bounded as $y\to\infty$ the first term achieves the maximum for large enough~$y_0$, so when the second term achieves the maximum we have the requisite asymptotic behaviour of~$\omega_u$ (and~$\lam$, which is constant) for applying Lemma~\ref{lem:clique,hcm}\ref{itm:largeclique,hcm}.
Then we choose~$\ell$ according to~$\omega$ such that
\[
  \ell = 8 \min\left\{
      \frac{1}{\lam}{\left(e^2\log(8\Delta^4)\right)}^{\omega-1},\,
    \exp\left(\sqrt{4\log(\omega-1)\cdot(1+\eps)\log(8\Delta^4)}\right) \right\}.
\]
Since $\omega\ge 3$, this choice satisfies $\ell=\omega(\log\Delta)$ as $\Delta\to\infty$, and thus that $\ell > 7\log\Delta$ for large enough~$\Delta_0$. 
Via~\eqref{eq:clique,Zlb} this gives the condition~\eqref{eq:main,chic,simple,Zlarge} required by Theorem~\ref{thm:main,chic,convenient}. 
We then apply Lemma~\ref{lem:clique,localocc} with the chosen~$\lam$ and~$\xi$, and 
\[
d_u=\frac{\max\left\{\deg(u),\delta\right\}}{\frac{\lam}{1+\lam}\frac{\ell}{1-\eta}},
\]
where $\eta = \sqrt{7(\log\Delta)/\ell} = o(1)$ as $\Delta\to\infty$, and~$\delta$ is large enough that~$d_u$ is at least the~$d_0$ from Lemma~\ref{lem:clique,localocc}. 
This yields suitable~$\beta_u$ and~$\gam_u$, and to complete the proof we must show that our assumptions mean that for large enough~$\Delta_0$ we have 
\[
  |L(u)|\ge \frac{\lam}{1+\lam}\frac{\ell}{1-\eta}\left(\beta_u+\gam_u d_u\right).
\]
Let us write~$b_u$ for this right-hand side expression. 
Since~$\lam$ is constant and~$\eta=o(1)$ as~$\Delta\to\infty$, we have
\[
  \log k \sim \log\left( \frac{\lam}{1+\lam}\frac{\ell}{1-\eta}\right),
\]
which implies that we may replace the term $\log d_u$ with $\log d_u'$ for $d_u' = \max\{\deg(u),\,\delta\}/k$ as follows. 
From Lemma~\ref{lem:clique,localocc} we have
\[
  b_u \le \frac{\lam}{1+\lam}\frac{\ell}{1-\eta}
  (1+\xi)^2\min\Bigg\{(\omega_u -2) \frac{d_u \log\log d_u}{\log d_u},\,
                               2d_u\sqrt{\frac{\log(\omega_u-1)}{\log d_u}} \Bigg\},
\]
when $\Delta_0$ is large enough and $\delta$ satisfies
\[
\delta \ge d_0 \frac{\lam}{1+\lam} \frac{\ell}{1-\eta},
\]
which is guaranteed by taking $\delta_0$ large enough. 
This means that as $\Delta\to\infty$
\begin{align*}
    b_u \le  (1+o(1)){(1+\xi)}^2&\max\{\deg(u),\delta\}\cdot{}\\&\min\Bigg\{(\omega_u -2) \frac{\log\log d_u'}{\log d_u'},
2\sqrt{\frac{\log(\omega_u-1)}{\log d_u'}} \Bigg\},
\end{align*}
so for large enough $\Delta_0$ the $1+o(1)$ factor is at most~$1+\xi$ and we obtain $b_u\le|L(u)|$ as required.

To complete the proof of~\ref{itm:clique,chic}, we next derive the non-local corollary (which implies Theorem~\ref{thm:clique,chi}), where we replace $\deg(u)$ with $\Delta$ and $\omega_u$ with $\omega$. 
Let
\begin{align}
  b_1 = (\omega -2) \frac{\Delta \log\log(\Delta/k)}{\log(\Delta/k)}
  \quad\text{and}\quad
  b_2 = 2\Delta\sqrt{\frac{\log(\omega-1)}{\log (\Delta/k)}},
\end{align}
and note that as $\Delta\to\infty$ we have 
\[
  \log k = \min\left\{O(\omega\log\log\Delta) ,\, \sqrt{4\log(\omega-1)\cdot(1+\eps)\log(8\Delta^4)}\right\}.
\]
If $b_1 < b_2$, then
\[
  \frac{\omega-2}{\sqrt{\log(\omega-1)}} < \frac{2\sqrt{\log(\Delta/k)}}{\log\log(\Delta/k)} < \frac{2\sqrt{\log\Delta}}{\log\log\Delta}.
\]
This means that as $\Delta\to\infty$ $\omega = O(\sqrt{\log\Delta/\log\log\Delta})$ and hence $\log k = o(\log\Delta)$.
It then follows that
\[
  b_1 \sim (\omega -2) \frac{\Delta \log\log\Delta}{\log\Delta},
\]
as required. 
If $b_2\ge b_1$, then we consider two cases. 
If $\omega-1 < \Delta^{1/24}$, then
\[
  \log k \le \sqrt{(1+2\eps)16/24}\log\Delta
\]
for $\Delta$ large enough, and so (assuming~$\eps<0.01$, say)
\[
  b_2 \le \frac{2}{\sqrt{1-\sqrt{(1+2\eps)16/24}}}\Delta\sqrt{\frac{\log(\omega-1)}{\log \Delta}} < 5\Delta\sqrt{\frac{\log(\omega-1)}{\log\Delta}},
\]
as required. 
And otherwise, provided~$\Delta$ is large enough, we have by~\eqref{eqn:params}
\begin{align}
  \chi_c(G) & \le \Delta +1
   \le (\Delta+1)\sqrt{\frac{24\log(\omega-1)}{\log\Delta}} < 5\Delta\sqrt{\frac{\log(\omega-1)}{\log\Delta}}.\qedhere
\end{align}
\end{proof}

The reader will notice at the end of the proof that an improvement in the constant below~$5$ is possible, but not below~$4$ without a better analysis or some improvement in Theorem~\ref{thm:main,chic} in the condition~\eqref{eq:Zlarge}.


\section{Concluding remarks}\label{sec:conc}

\subsection{Ramsey numbers and graph colouring}

Ever since the seminal work of Johansson~\cite{Joh96}, researchers have intuitively felt that finding asymptotic bounds on the (list) chromatic number of triangle-free graphs is closely tied to estimation of off-diagonal Ramsey numbers. 
In particular it is believed that the bottleneck in bounding the chromatic number from above is essentially in bounding the independence number from below. 
Our work shows this in a concrete sense, and for more general classes of sparse graphs. 
Our results are most interesting in the sparsest settings, where it seems that the best independence number bounds come from a suitable local understanding of the hard-core model.

This general belief relating (list) chromatic numbers and independence numbers is essentially valid when we instead consider the binomial random graph, cf.~\cite{KaMc15survey}; this has affinity to the problem settings considered here.
To elaborate on this further, we present a general family of strengthened Ramsey numbers, followed by a simple observation.
For positive integers~$k$ and~$\ell$, define the \emph{chromatic Ramsey number~$R_\chi(k,\underline{\ell})$} as the least~$n$ such that for any $K_k$-free graph~$G$ on~$n$ vertices, the chromatic number of~$G$ satisfies $\chi(G) < n/\ell$. With~\eqref{eqn:params} in mind, one can analogously define the \emph{fractional chromatic Ramsey number~$R_{\chi_f}(k,\underline{\ell})$} and the \emph{list chromatic Ramsey number~$R_{\chi_\ell}(k,\underline{\ell})$}. By~\eqref{eqn:params}, we have $R(k,\ell) \le R_{\chi_f}(k,\underline{\ell}) \le R_\chi(k,\underline{\ell}) \le R_{\chi_\ell}(k,\underline{\ell})$ always. The following shows how all of three of these parameters are well defined and, at least in the symmetric case, obey a similar asymptotic upper bound as do the classical Ramsey numbers~\cite{ErSz35}.

\begin{proposition}\label{prop:erdosszekeres,kahn}
$R_{\chi_\ell}(k,\underline{k}) \le 2^{(2+o(1))k}$ as $k\to\infty$.
\end{proposition}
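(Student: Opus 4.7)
The plan is to mimic the classical Erd\H{o}s--Szekeres induction for $R(k,k)$ in the list-chromatic setting, aiming to establish the recurrence
\[
R_{\chi_\ell}(k,\underline{\ell}) \le R_{\chi_\ell}(k-1,\underline{\ell}) + R_{\chi_\ell}(k,\underline{\ell-1}),
\]
together with suitable base cases (e.g.~$\ell = 1$, where $\chi_\ell(G) < n$ is trivial for any $K_2$-free $G$). Unfolding this recurrence as in Erd\H{o}s--Szekeres~\cite{ErSz35} then gives $R_{\chi_\ell}(k,\underline{k}) \le \binom{2k-2}{k-1}$, which by Stirling is $2^{(2+o(1))k}$ as $k\to\infty$.

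For the inductive step, I would let $G$ be a $K_k$-free graph on $n = N_1 + N_2$ vertices with $N_1 = R_{\chi_\ell}(k-1,\underline{\ell})$ and $N_2 = R_{\chi_\ell}(k,\underline{\ell-1})$, and suppose we are given an arbitrary list-assignment~$L$ of size $\lceil n/\ell \rceil - 1$; the goal is then to produce a proper $L$-colouring. Fixing any vertex~$v$, since $\deg(v) + |V\setminus N[v]| = n-1 \ge N_1 + N_2 - 1$, pigeon-hole gives either $\deg(v) \ge N_1$ or $|V\setminus N[v]| \ge N_2$. In the former case $G[N(v)]$ is $K_{k-1}$-free on at least $N_1$ vertices, so the inductive hypothesis with parameters $(k-1,\ell)$ applies; in the latter case, the non-neighbours of $v$ induce a $K_k$-free subgraph on at least $N_2$ vertices, to which the inductive hypothesis with parameters $(k,\ell-1)$ applies.

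The main obstacle I anticipate is combining these partial list-colourings into a global one: because $N[v]$ and $V\setminus N[v]$ are joined by edges in $G$, the clean disjoint-union trick that works for ordinary chromatic number is unavailable for list colouring, since the lists on different vertices need not share colours. To circumvent this, I would first apply the inductive bound on the `cleaner' side, using sub-lists of the reduced size it yields, and then extend the colouring to the remaining vertices by exploiting the slack in the original list size $\lceil n/\ell\rceil - 1$ to absorb conflicts arising from the cross-edges. When the slack becomes tight, I would fall back on a transversal result such as Haxell's theorem~\cite{Hax01} to find a valid extension. If this combination step can be pushed through, the Erd\H{o}s--Szekeres unfolding completes the proof; this combination is where the technical subtlety lies.
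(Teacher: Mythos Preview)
Your approach has a genuine gap precisely where you anticipate it: the combination step. The Erd\H{o}s--Szekeres pigeonhole at a single vertex~$v$ yields information about one induced subgraph (either $G[N(v)]$ or $G[V\setminus N[v]]$), but for list colouring there is no mechanism to promote a colouring of a subgraph to one of~$G$. For ordinary chromatic number one has $\chi(G)\le\chi(G[V_1])+\chi(G[V_2])$ whenever $V_1\cup V_2=V(G)$, by using disjoint palettes on the two parts; for list colouring this fails because the lists are prescribed and need not admit such a split. Your proposed fixes do not overcome this: the `slack' in the list size is only about $n/\ell - N_1/\ell = N_2/\ell$, far too small to greedily extend over the remaining $\sim N_2$ vertices (each of which may have many already-coloured neighbours in~$N(v)$), and Haxell's theorem requires a bound of the form $\deg^*\le |L|/2$ on the residual cover that you have no way to enforce here. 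In short, the recurrence $R_{\chi_\ell}(k,\underline{\ell}) \le R_{\chi_\ell}(k-1,\underline{\ell}) + R_{\chi_\ell}(k,\underline{\ell-1})$ has no reason to hold.

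The paper's argument is entirely different and avoids any recurrence on $R_{\chi_\ell}$. It uses the classical bound on $R(k,k)$ only as a black box, guaranteeing that every induced subgraph on at least $n/{(\log n)}^2$ vertices of a $K_{\frac12\log_2 n}$-free graph~$G$ contains an independent set of size $q\sim\tfrac12\log_2 n$. With lists of size $\lceil n/q\rceil + \lceil n/{(\log n)}^2\rceil$, one greedily takes any colour~$x$ that still appears on at least $n/{(\log n)}^2$ lists, colours an independent set of size~$q$ among those vertices with~$x$, and deletes~$x$ from all other lists; once no such colour remains, every surviving list still has at least $\lceil n/{(\log n)}^2\rceil$ colours while every colour lies on fewer than $n/{(\log n)}^2$ lists, so Hall's theorem gives an injective (hence proper) completion. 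This is Kahn's argument for the list chromatic number of random graphs, transplanted via the Ramsey bound.
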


\begin{proof}
We prove the following equivalent statement:
for any graph~$G$ on~$n$ vertices which contains no clique of size $\frac12\log_2 n$, the list chromatic number of~$G$ satisfies $\chi_\ell(G) \le (2+o(1))n/\log_2 n$ as~$n\to\infty$.
This uses the same argument Kahn used to asymptotically determine the list chromatic number of binomial random graphs, cf.~\cite{Alo93}.

    By the Ramsey number upper bound of Erd\H{o}s and Szekeres~\cite{ErSz35}, in any~$S\subseteq V(G)$ such that $|S|\ge n/{(\log n)}^2$, there is guaranteed to be an independent set of~$G$ of size~$q$, for some~$q$ satisfying $q=(\frac12 +o(1)) \log_2 n$ as $n\to\infty$.
    Let~$L$ be a $k$-list-assignment of~$G$ where $k= \lceil n/q\rceil + \lceil n/{(\log n)}^2\rceil$. Note $k=(2+o(1))n/\log_2 n$ as $n\to\infty$.
    Repeat as follows for as long as possible: for each colour~$x$ that appears on at least $n/{(\log n)}^2$ lists, colour all the vertices of some independent set of size $q$ with colour $x$, and then remove~$x$ from all other lists. Afterwards, every colour appears on fewer than~$\lceil n/{(\log n)}^2\rceil$ lists, while every list has at least $\lceil n/{(\log n)}^2\rceil$ colours. Thus an application of Hall's theorem yields a proper $L$-colouring of $G$, as desired.
\end{proof}

\noindent
An interesting but perhaps difficult question is the following: is it true that $R_\chi(k,\underline{k})-R(k,k) \to\infty$ as~$k\to\infty$?
In another direction, the triangle-free analogue of Proposition~\ref{prop:erdosszekeres,kahn} remains an intriguing challenge.

\begin{conjecture}[\cite{CJKP18+}]\label{thm:CJKP}
$R_{\chi_\ell}(3,\underline{k}) \le (1+o(1))k^2/\log k$ as $k\to\infty$.
\end{conjecture}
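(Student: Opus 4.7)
The conjecture is equivalent to $\chi_\ell(G)\le(1+o(1))\sqrt{2n/\log n}$ for every triangle-free graph~$G$ on~$n$ vertices. I would mimic the two-phase Kahn-style scheme used in the proof of Proposition~\ref{prop:erdosszekeres,kahn}, replacing the Erd\H{o}s--Szekeres Ramsey step with the Ajtai--Koml\'{o}s--Szemer\'{e}di/Shearer bound (Theorem~\ref{thm:shearer}) applied \emph{globally} to~$G$. Concretely, given a $k$-list-assignment~$L$ with $k=(1+\eps)\sqrt{2n/\log n}$ and setting $q=(1-\eps/3)\sqrt{n\log n/2}$, iterate the following step: while some colour $c$ appears on the lists of at least $n/(\log n)^2$ still-uncoloured vertices (call this set~$V_c$), find an independent set $I_c\subseteq V_c$ with $|I_c|\ge q$, colour $I_c$ with~$c$, and remove $c$ from all remaining lists. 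After at most $n/q+n/(\log n)^2<k$ iterations no colour has large support, and the Hall-theorem finishing step from Proposition~\ref{prop:erdosszekeres,kahn} completes the colouring.

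All the content sits in producing the independent set~$I_c$ of size~$q$ inside $G[V_c]$. Two subregimes are easy: if $\Delta(G[V_c])$ exceeds $\sqrt{n\log n}$ then by triangle-freeness the $V_c$-neighbourhood of any such vertex is itself an independent set of the required size, and if $|V_c|\ge n/\log\log n$ then Shearer applied directly to the triangle-free $G[V_c]$ supplies~$I_c$. The remaining regime --- $|V_c|$ near $n/(\log n)^2$ combined with $\Delta(G[V_c])\ll\sqrt{n\log n}$ --- is where the hard-core framework of this paper is meant to enter: sample $\bI$ from the hard-core model on the \emph{whole} of~$G$ at fugacity $\lam=1/\log\Delta(G)$ with local occupancy parameters as in~\eqref{eqn:betagam,triangle}, and use the per-vertex calculation of Subsection~\ref{sub:ideas} to lower-bound $\EE|\bI\cap V_c|$. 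Concentration would then follow from the negative correlation of (the $V_c$-analogue of) Lemma~\ref{lem:LIu} together with the Chernoff bound of Subsection~\ref{sub:prob}, applied to the indicators $\indicator{u\in\bI}$ for $u\in V_c$.

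The main obstacle --- and, I suspect, the reason the statement remains conjectural --- is the cross-term generated when restricting the local-occupancy inequality from $V(G)$ to $V_c$: one obtains only
\[
\beta\,\EE|\bI\cap V_c|+\gam\sum_{u\in V_c}\EE|\bI\cap N(u)|\ge |V_c|,
\]
in which the second sum is controlled by $\gam\Delta\cdot\EE|\bI\cap N(V_c)|$ rather than by $\gam\Delta\cdot\EE|\bI\cap V_c|$; when many neighbours of~$V_c$ lie outside~$V_c$ this leak prevents the tight Shearer-sized lower bound on $\EE|\bI\cap V_c|$. Closing it appears to require either a new entropy-type refinement showing that, on average over the colours $c$ encountered during the iteration, the supports $V_c$ absorb essentially all of their neighbour mass, or a coupling of a \emph{single} hard-core sample across all iterations so as to avoid paying independent probabilistic costs per colour. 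Both ingredients go beyond the machinery developed in this paper, which is where any attack on the conjecture would have to invest real new effort.
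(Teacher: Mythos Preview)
This statement is presented in the paper as an open \emph{conjecture}, not a theorem; the paper provides no proof and explicitly remarks immediately afterwards that ``not even has the asymptotic order been matched, nor has the fractional chromatic version been verified.'' There is therefore no proof in the paper to compare your proposal against.

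What you have written is not a proof but a plan of attack together with an honest diagnosis of where it fails, and that diagnosis is correct. The Kahn-style greedy scheme from Proposition~\ref{prop:erdosszekeres,kahn} requires, at each iteration, an independent set of size roughly $\sqrt{(n\log n)/2}$ inside an \emph{arbitrary} subset $V_c$ of size possibly as small as $n/(\log n)^2$. Neither Shearer's theorem nor the local-occupancy machinery of this paper delivers that: applied to $G[V_c]$ they give independent sets of size about $|V_c|\log\Delta(G[V_c])/\Delta(G[V_c])$, which in the hard regime you isolate falls short by polylogarithmic factors. The ``leak'' you pinpoint in the restricted inequality
\[
\beta\,\EE|\bI\cap V_c|+\gam\sum_{u\in V_c}\EE|\bI\cap N(u)|\ge |V_c|
\]
is exactly the issue: the second sum counts occupancy in $N(V_c)$ rather than in $V_c$, and when $V_c$ is small with many external neighbours one cannot close the argument. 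This is not a technical slip but the substantive obstacle, and nothing in the paper's framework gets around it. Your two suggested remedies (averaging over colours, or coupling a single hard-core sample across iterations) are reasonable speculations, but as you say they would require genuinely new ideas. In short: you have correctly recognised that the statement is conjectural and correctly located the barrier; there is no gap to fix because there is no proof to give.
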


\noindent
By~\eqref{eqn:params}, this would imply Shearer's bound on the off-diagonal Ramsey numbers~\cite{She83}.
Not even has the asymptotic order been matched, nor has the fractional chromatic version been verified. We have a strong feeling that hard-core methods may be of use here, but it needs another idea or two.

\smallskip
By contrast, it is worth mentioning that the arguments in Subsection~\ref{sub:ideas} are sharp, in the sense that the conclusion about occupancy fraction in Theorem~\ref{thm:main,occfrac} cannot be asymptotically improved in general. Specifically, the triangle-free occupancy fraction bound in~\cite{DJPR18} is asymptotically extremal due to the random regular graphs. The authors of~\cite{DJPR18} naturally raised the question of whether maximum independent set size can be guaranteed to be large in comparison to average independent set size, especially for triangle-free graphs, and conjectured the following.
\begin{conjecture}[\cite{DJPR18}]
For any triangle-free graph~$G$ of minimum degree~$\delta$,
\[
\alpha(G) \ge (2-o(1))Z'_G(1)/Z_G(1)
\]
as $\delta\to\infty$.
\end{conjecture}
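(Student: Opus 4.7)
The plan is to exploit a symmetry special to fugacity~$\lam=1$. Let~$\bI$ be drawn from the hard-core model on~$G$ at fugacity~$1$, and let $\bU = V(G)\setminus N[\bI]$ be the random \emph{externally uncovered} set. Applying the spatial Markov property with~$X=\{v\}$ yields $\Pr(v\in \bI\mid \bI\cap N(v)=\varnothing)= \lam/(1+\lam)=1/2$, and hence for every~$v\in V(G)$,
\[
\Pr(v\in \bU) + \Pr(v\in \bI) = \Pr(\bI\cap N(v) = \varnothing) = 2\Pr(v\in\bI),
\]
so $\EE|\bU|=\EE|\bI|$. Since any independent set~$J$ of~$G[\bU]$ lies in $V(G)\setminus N[\bI]$, the union~$\bI\cup J$ is independent in~$G$; choosing~$J$ to be a maximum independent set of~$G[\bU]$ and averaging yields $\alpha(G) \ge \EE|\bI| + \EE\alpha(G[\bU])$. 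Thus the conjecture reduces to proving $\EE\alpha(G[\bU]) \ge (1-o(1))\EE|\bU|$ as~$\delta\to\infty$.

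The strategy is to show that~$G[\bU]$ is in expectation nearly edgeless, so that the elementary bound $\alpha(G[\bU]) \ge |\bU|-|E(G[\bU])|$ will yield the required inequality. Triangle-freeness enters crucially here: for any edge~$uv\in E(G)$ the absence of a triangle through~$uv$ forces~$N[u]\cup N[v]$ to consist of~$d(u)+d(v)$ distinct vertices, and a direct calculation gives
\[
\Pr(u,v\in\bU) = \frac{Z_{G\setminus(N[u]\cup N[v])}(1)}{Z_G(1)}.
\]
The task is to bound this ratio from above by a function that decays fast enough in~$\delta$ that $\EE|E(G[\bU])|=\sum_{uv\in E(G)}\Pr(u,v\in\bU)$ is negligible compared with~$\EE|\bU|$. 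This could be attempted by combining the hard-core tools from Subsection~\ref{sub:sparse,hcm} with independence-style arguments on disjoint neighbourhoods, and the resulting expectation estimate could then be upgraded to hold with high probability via the lopsided Lov\'{a}sz local lemma as in Subsection~\ref{sub:genproofs}.

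The principal obstacle lies in making the partition-function ratio estimate tight enough to preserve the factor of~$2$. Since the conjectured constant is matched asymptotically by random regular triangle-free graphs, any multiplicative slack in bounding $Z_{G\setminus(N[u]\cup N[v])}(1)/Z_G(1)$ will propagate into the final constant, so the analysis must be essentially lossless. Moreover, although triangle-freeness gives disjointness of the two closed neighbourhoods, the conditional distribution on $V(G)\setminus(N[u]\cup N[v])$ is not the unconditioned hard-core model on that induced subgraph, so a genuinely new idea beyond the pointwise spatial Markov property may be required---perhaps a form of joint local occupancy across adjacent vertex pairs. In this sense the conjecture appears to be of comparable difficulty to closing the factor-of-two gap in Shearer's theorem itself, and its resolution seems likely to demand structural input about triangle-free graphs beyond what local hard-core analysis alone provides.
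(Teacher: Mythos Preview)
The statement under consideration is a \emph{conjecture}, and the paper presents no proof of it; indeed, immediately after stating it the paper remarks that any partial progress, i.e.\ with an asymptotic leading constant strictly greater than~$1$, would be a major advance in quantitative Ramsey theory as it would constitute an improvement over Shearer's bound. There is therefore no proof in the paper against which to compare your proposal.

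Your initial reduction is correct and rather neat: the identity $\EE|\bU|=\EE|\bI|$ at fugacity~$1$ follows exactly as you describe from the spatial Markov property, and the inequality $\alpha(G)\ge\EE|\bI|+\EE\alpha(G[\bU])$ is valid, so the conjecture does reduce to showing $\EE\alpha(G[\bU])\ge(1-o(1))\EE|\bU|$. However, as you yourself concede in the final paragraph, the proposal does not complete this step. You correctly identify the obstacle: obtaining a lossless estimate on the partition-function ratio $Z_{G\setminus(N[u]\cup N[v])}(1)/Z_G(1)$ so that $\EE|E(G[\bU])|=o(\EE|\bU|)$ appears to be of the same order of difficulty as improving Shearer's bound directly. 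Your write-up is therefore not a proof but a (sound) reformulation of an open problem, and your closing assessment is entirely in line with the status the paper itself assigns to the statement.
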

\noindent
Any partial progress, i.e.~with an asymptotic leading constant strictly greater than~$1$, would be a major advance in quantitative Ramsey theory as it would constitute an improvement over Shearer's bound~\cite{She83}.

\subsection{Local colouring of triangle-free graphs}\label{sub:local}

In Section~\ref{sec:mad} we proved local colouring results for graphs with no~$C_k$ and graphs in which neighbourhoods contain few edges. 
In Section~\ref{sec:localhall} we did the same for graphs with bounded local Hall ratio. 
Each of these settings generalises the condition of containing no triangles.
Specifically, each of Theorem~\ref{thm:Cfree} with $k=3$, Theorem~\ref{thm:sparsenbhds} with $t=1/2$, and Theorem~\ref{thm:localhall} with $\rho_u=1$ for all~$u$ implies a stronger form of Theorem~\ref{thm:molloy,local} as follows.

\begin{corollary}\label{cor:molloy,local}
For any $\eps>0$ there exist $\delta_0$ and $\Delta_0$ such that the following holds for all $\Delta\ge\Delta_0$. For any triangle-free graph~$G$ of maximum degree~$\Delta$,
if $\sH=(L,H)$ is a cover of~$G$ such that
\[
|L(u)| \ge (1+\eps)\max\left\{ \frac{\deg(u)}{\log(\deg(u)/\log \Delta)},  \frac{\delta_0}{\log\delta_0}\log \Delta\right\}
\]
for each~$u\in V(G)$, then~$G$ is $\sH$-colourable.
\end{corollary}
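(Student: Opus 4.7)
The plan is to derive the corollary as a direct specialization of one of the three general results mentioned immediately before the statement. The cleanest route is via Theorem~\ref{thm:localhall}\ref{itm:localhall,chic} with $\rho_u = 1$ for every $u \in V(G)$, since $G$ being triangle-free is precisely the condition that every neighbourhood is edgeless, and an edgeless graph has Hall ratio exactly~$1$.

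With $\rho = \rho_u = 1$ one computes $\fn(\rho_u) = -W_{-1}(-1/e) = 1$, so the list-size lower bound in Theorem~\ref{thm:localhall}\ref{itm:localhall,chic}, upon setting $\delta = \delta_0 \log\Delta$, reduces to
\[
|L(u)| \ge (1+\eps')\max\left\{\frac{\deg(u)}{\log(\deg(u)/\log\Delta)},\,\frac{\delta_0 \log\Delta}{\log\delta_0}\right\}.
\]
Choosing $\eps' < \eps$ and enlarging $\delta_0$ and $\Delta_0$ as needed, the bound assumed in Corollary~\ref{cor:molloy,local} implies the one above, and so Theorem~\ref{thm:localhall}\ref{itm:localhall,chic} delivers the required $\sH$-colouring.

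One could equally invoke Theorem~\ref{thm:Cfree}\ref{itm:Cfree,chic} with $k = 3$ or Theorem~\ref{thm:sparsenbhds}\ref{itm:sparsenbhds,chic} with $t = 1/2$. The former introduces constant factors of~$3$ inside the logarithm and alongside $\log\Delta$ in the second argument of the maximum; the latter introduces a factor of $\sqrt{2 \cdot 1/2} = 1$ in the corresponding place. In either case the stray constants are absorbed into $1+\eps$ once $\delta_0$ and $\Delta_0$ are large enough, since $\log(\deg(u)/(3\log\Delta)) - \log(\deg(u)/\log\Delta) = -\log 3$ is a bounded additive shift and is negligible compared to the logarithms themselves (under the minimum-list-size condition that forces $\deg(u)/\log\Delta \to \infty$).

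There is no substantive obstacle: all three specializations trace back, in the triangle-free case, to the same underlying local hard-core analysis---either Lemma~\ref{lem:mad} with $a_u = 0$ or Lemma~\ref{lem:localhall,localocc} at $\rho_u = 1$---recovering the parameters originally recorded in~\eqref{eqn:betagam,triangle}. The only bookkeeping is absorbing asymptotic constants into $1+\eps$, which is routine.
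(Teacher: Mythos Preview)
Your proposal is correct and matches the paper's own justification, which simply observes (without further detail) that each of Theorem~\ref{thm:Cfree} with $k=3$, Theorem~\ref{thm:sparsenbhds} with $t=1/2$, and Theorem~\ref{thm:localhall} with $\rho_u=1$ specialises to the corollary. Your computation via $\fn(1)=1$ and $\delta=\delta_0\log\Delta$ in Theorem~\ref{thm:localhall}\ref{itm:localhall,chic} is exactly right and in fact recovers the stated bound on the nose, with no constant absorption needed for that route.
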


\noindent
Next we indicate how this result matches or improves upon previous work about local list/correspondence colourings of triangle-free graphs~\cite{DJKP18+local,BKNP18+}. 

In particular, for some (small) fixed~$\eps'>0$, and a target list size bound of
\[
(1+\eps')\frac{\deg(u)}{\log\deg(u)},
\]
taking~$\eps''$ arbitrarily close to but less than~$\eps'$, and then~$\eps>0$ arbitrarily small so that $(1+\eps)(1+\eps'')< 1+\eps'$, Corollary~\ref{cor:molloy,local} implies that it suffices to maintain that
\[
\log\left(\frac{\deg(u)}{\log\Delta}\right) \ge \frac{1}{1+\eps''}\log\deg(u)
\]
whenever $\deg(u)\ge \delta$, which is implied by 
$\delta \ge {(\log\Delta)}^{(1+\eps'')/\eps''}$.
When $\eps'$ is small, this roughly matches~\cite[Thm.~1]{DJKP18+local} which required $\delta\ge{(192\log\Delta)}^{2/\eps'}$ for the same bound.
When $\eps'>1$, the bound on $\delta$ can be taken as $\delta \ge{(\log\Delta)}^2$ and this improves upon the asymptotic leading factor in~\cite[Thm.~1.12]{BKNP18+}, which was~$4\log 2$ with the same condition on~$\delta$.

We have just illustrated how the form of Corollary~\ref{cor:molloy,local} allows some limited trade-off between lowering the leading constant in the bound and lowering the minimum list size condition. In fact, Theorem~\ref{thm:main,chic,convenient} allows for an analogous trade-off more generally in locally sparse graph classes.

One may naturally wonder though if such a minimum list size condition is truly required. Or perhaps it is a mere technicality that is by-product to our methods?
On the contrary, as already noted~\cite{DJKP18+local}, \emph{some} such assumption is strictly necessarily for local list colouring results, even within the class of bipartite graphs. The following is an elementary inductive construction.

\begin{proposition}[\cite{DJKP18+local}]\label{prop:DJKP18p}
For any $\delta$, there is a bipartite graph of minimum degree $\delta$ and
maximum degree $\exp^{\delta-1}(\delta)$ (so a tower of exponentials of height
$\delta-1$) that is not $L$-colourable for some list assignment $L \colon V(G)\to
2^{\ZZ^+}$ satisfying \(|L(u)| \ge \deg(u)/\log \deg(u)\) for all~$u\in V(G)$.
\end{proposition}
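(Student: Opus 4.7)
The plan is an explicit inductive construction in which the maximum degree gains exactly one more level of the exponential tower at each step. The base case handles small $\delta$ directly: take a small bipartite graph with carefully chosen lists whose sizes match the boundary case of the $\deg(u)/\log\deg(u)$ inequality (for instance, adapting the Erdős--Rubin--Taylor non-$k$-choosable $K_{n,n}$ with $n=\binom{2k-1}{k}$ and re-tuning to hit the required degree-to-list-size ratio). Given $G_{\delta-1}$ realising the claim at level $\delta-1$ with maximum degree $D=\exp^{\delta-2}(\delta)$, construct $G_\delta$ by attaching new bipartite ``list-forcing'' gadgets, one for each colour one would like to forbid on certain key vertices of $G_{\delta-1}$.

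Concretely, each forcing gadget is a bipartite structure on new vertices (all of degree exactly $\delta$, so that the minimum-degree constraint is respected) designed so that, given a candidate colour for its key vertex $u$, extending the colouring to the gadget is possible if and only if that candidate lies outside a prescribed subset of $L(u)$. Stacking one gadget per colour in $L(u)$ rules out every candidate, turning the local extension problem infeasible. Each new vertex has degree $\delta$ and needs only a list of size $\lceil\delta/\log\delta\rceil$, which is readily supplied; the key vertices, in turn, see their degree multiplied by roughly $\exp(D)$ because one attaches a gadget per element of their enlarged list, and the list on the key vertex must be inflated accordingly to at least $\exp(D)/D$ to respect the $\deg/\log\deg$ lower bound. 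This one-exponential blowup of the degree at each step explains the tower-of-exponentials bound in the statement.

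The main technical obstacle is maintaining the invariant $|L(u)|\ge \deg(u)/\log\deg(u)$ simultaneously with non-$L$-colourability at every level. For the newly introduced low-degree vertices this is a direct check, but for the old key vertices one must ensure that the expansion of their list (now large enough to avoid violating the inequality) is exactly cancelled by the larger collection of attached gadgets, with one gadget per element of the list. Getting the bookkeeping tight is what forces the exponential jump per level, yielding $\exp^{\delta-1}(\delta)$ after $\delta-1$ inductive steps. Bipartiteness is preserved throughout by placing the two sides of each gadget into the two parts of $G_{\delta-1}$ consistently, so the final construction remains bipartite.
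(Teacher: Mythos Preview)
The paper does not actually prove this proposition: it merely states that ``The following is an elementary inductive construction'' and cites~\cite{DJKP18+local} for the details. So there is no in-paper proof to compare against beyond that one-line description.

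Your sketch is consistent with that description---an inductive construction where each step adds one level to the exponential tower---and the heuristic for why the tower appears (enlarging a key vertex's list forces proportionally many forbidding gadgets, each contributing edges, so the degree jumps by roughly an exponential) is the right intuition. However, as written the sketch has real gaps that would need to be filled to constitute a proof. First, the gadget is never actually specified: you assert the existence of a bipartite structure on new degree-$\delta$ vertices that forbids a prescribed colour at a key vertex, but you do not say what that structure is or why it has the claimed behaviour. Second, the induction is on minimum degree, so \emph{every} vertex of $G_{\delta-1}$, not just the ``key'' ones, must have its degree raised to at least~$\delta$ without destroying non-colourability or the list-size inequality; your sketch does not address the non-key vertices. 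Third, the base case is hand-waved: the Erd\H{o}s--Rubin--Taylor $K_{n,n}$ example has list sizes of order $\log n$ against degree~$n$, which is far smaller than $n/\log n$, so it does not directly satisfy the hypothesis and ``re-tuning'' is doing unexplained work. Finally, the arithmetic relating list size, gadget count, and degree growth is asserted rather than checked; getting the recursion to land exactly on $\exp^{\delta-1}(\delta)$ requires a careful choice of parameters at each step.

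In short: right shape, but several load-bearing details are missing.
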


\noindent
An interesting challenge is to find as a function of~$\Delta$ the best sufficient minimum list size condition for local list/correspondence colouring of triangle-free graphs with a target list size of the form, say,
\[
2\frac{\deg(u)}{\log\deg(u)}
\]
for every vertex~$u$.
By the remarks after Corollary~\ref{cor:molloy,local}, it is at most around ${(\log\Delta)}^2$, while (a light adaptation of) the construction of Proposition~\ref{prop:DJKP18p} shows that it is at least some inverse of a tall tower of exponentials.

By contrast, for all of our local colouring results involving~$\chi_f$, we need no analogous assumption, although we
wonder if this is compensated by the alternative assumption that the colour lists are nested.
Relatedly, we would be curious to know if the minimum list size condition in Theorem~\ref{thm:molloy,local}, say, could be avoided or significantly reduced.


\subsection{Degree bounds}

We humbly point out the limitation in our framework
that we only treat graphs of given maximum degree. Shearer~\cite{She83} (improving on~\cite{AKS81}) proved a stronger form of Theorem~\ref{thm:shearer} with degeneracy~$\delta^*$ in the place of~$\Delta$ (cf.~\eqref{eqn:params}).
Unfortunately, it is impossible to hope for a similar strengthening for example in Theorem~\ref{thm:molloy}, due to a well-known construction~\cite{AKS99}. On the other hand, walking back a little along~\eqref{eqn:params}, it might yet be possible for the occupancy fraction bound in Theorem~\ref{thm:main,occfrac} or the fractional chromatic number bound in Theorem~\ref{thm:main,chif} to hold with degeneracy instead of maximum degree. If so, in the latter case it would confirm the recent conjectures of Harris~\cite{Har19} and of Esperet, Thomass\'{e} and the second author~\cite{EKT19}. 

\subsection{Algorithms}

As we alluded to in earlier sections, it is natural to wonder about the computational effectiveness of the framework we developed: does it lead to efficient
randomized algorithms to colour sparse graphs?  This topic touches upon algorithmic aspects of the Lovász
local lemma, but also raises the question of how to efficiently sample according to the hard-core
distribution at some given fugacity.  As it turns out, the sufficient conditions provided throughout this
work to colour sparse graphs do not automatically yield randomized polynomial-time algorithms for
correspondence colouring. But the situation is better as soon as one restricts to list colouring. In a
companion work~\cite{DKPS20}, we provide a study of these issues and consider, as an application, graphs with
few short cycles. For any fixed integer~$k\ge3$, we give a randomized polynomial-time algorithm for list
colouring graphs of maximum degree~$\Delta$ in which each vertex belongs to at most~$t$ copies of a
$k$-cycle, where~$1/2\le t\le \Delta^{2\varepsilon/(1+2\varepsilon)}/{(\log\Delta)}^2$. To this end, each
list is required to have size at least~$(1+\varepsilon)\Delta/\log(\Delta/\sqrt{t})$, thereby providing a
direct generalisation of a recent result of Achlioptas, Iliopoulos and Sinclair~\cite{AIS19}, who
obtained this statement restricted to~$k=3$. We point out that our result is tight up to the algorithmic
barrier for colouring random graphs. On the other hand, it seems more difficult to obtain similar
statements for correspondence colouring. 
The example of complete bipartite graphs given at the end of Subsection~\ref{sub:structure} already shows that list and correspondence chromatic numbers can be very different, but it would be rather interesting to either quantify some computational differences between list and correspondence colouring, or devise algorithmic techniques that show the absence of such differences. 

\subsection{Continuous models}

Our work here continues a line of research started in~\cite{DJPR17,DJPR18} based on applying some analysis of the hard-core model to extremal combinatorics. 
It is important to note that a continuous analogue of the model, known as the \emph{hard-sphere model} is also well-studied. 
Continuing the theme from~\cite{DJPR17,DJPR18} in a different direction, Jenssen, Joos, and Perkins~\cite{JJP18,JJP19} analysed the occupancy fraction of both the hard sphere model and the hard cap model using ideas that resemble the local occupancy and local sparsity concepts we discuss here.
There are significant issues with bringing the more complex parts of our framework (that deal with colourings) to bear on these models and their underlying infinite geometric graphs, as independent sets in e.g.\ the hard sphere model have measure zero in Euclidean space. 
It would be interesting to apply our framework to related geometric models where these issues are not present.

\section*{Acknowledgements}

We would like to thank Alistair Sinclair, Fotis Iliopoulos, and Charlie Carlson for insightful discussions. 

\bibliographystyle{habbrv}
\bibliography{hcm_link}

\end{document}